\providecommand{\keywords}[1]{\textbf{\textit{Key words and phrases---}} #1}
\newlength{\dhatheight}
\newcommand{\CC}{\D{C}}
\newcommand{\R}{\mathbb R}
\newcommand{\Z}{\mathbb Z}
\newcommand{\supp}{\operatorname{supp}}
\newcommand{\Mis}{\mathfrak{M}}
\newcommand{\dist}{\mathrm{dist}}
\newcommand{\tagliato}{$\kern-5 mm -$}
\newcommand{\tagliat}{$\kern-4 mm -$}
\newcommand{\D}[1]{\mbox{\rm #1}}
\newcommand{\dd}{{\rm d}}
\newtheorem{thm}{Theorem}[section]
\newtheorem{prop}[thm]{Proposition}
\newtheorem{lemme}[thm]{Lemma}
\newtheorem{definition}[thm]{Definition}
\newtheorem{cor}[thm]{Corollary}
\theoremstyle{definition}
\newtheorem{rem}[thm]{Remark}
\newtheorem{rems}[thm]{Remarks}
\numberwithin{equation}{section}
\newcommand{\galax}{\gamma^x_\lambda}
\newcommand{\dgalax}{\dot{\gamma}^x_\lambda}
\def\red#1{\textcolor{red}{#1}}
\title{Convergence of the solutions\\ 
of the \red{nonlinear} discounted Hamilton--Jacobi equation:
The central role of Mather measures.}
\author{Qinbo Chen\footnote{\scriptsize Department of Mathematics, Nanjing University, Nanjing 210093, China.
{\it E-mail address:} {\tt qinbochen1990@gmail.com}}
\and 
Albert Fathi\footnote{\scriptsize Georgia Institute of Technology, %\& ENS de Lyon (Emeritus)
School of Mathematics, Atlanta, GA 30332, USA. 
{\it E-mail address:} {\tt albert.fathi@math.gatech.edu}
Work finalized in December 2022, while at Equipe d'Analyse Alg\'ebrique  IMJ Paris
}
\and 
Maxime Zavidovique\footnote{\scriptsize Sorbonne Universit\' e, Universit\'e de Paris Cit\' e, CNRS, Institut de Math\' ematiques de Jussieu- Paris Rive Gauche, F-75005 Paris, France.
{\it E-mail address:} {\tt maxime.zavidovique@imj-prg.fr}
Supported by ANR CoSyDy   (ANR-CE40-0014)}
\and 
Jianlu Zhang\footnote{\scriptsize Hua Loo-Keng Key Laboratory of Mathematics and Mathematics Institute, Academy of Mathematics and Systems Science, Chinese Academy of Sciences, Beijing, 100190, China.  
{\it E-mail address:} {\tt jellychung1987@gmail.com}}
}
\date{}
\begin{document}

\maketitle

\vskip -7cm
\centerline{\hfill Dedicated to the memory of John Mather (1942--2017)}

\vskip 8cm

%The methods in \cite{QC} and \cite{MZ} can be pushed further to get a more general result on convergence of discounted solutions.

\begin{abstract}
Given a continuous Hamiltonian $H : (x,p,u) \mapsto H(x,p,u)$ defined on 
$ T^*M \times \mathbb R $, where $M$ is a closed connected manifold, we study viscosity solutions, $u_\lambda : M\to \mathbb R$, of  discounted equations:
$$ H(x, d_x u_\lambda, \lambda u_\lambda(x))=c\ \ \textup{in $M$}$$
where $\lambda >0$ is called a discount factor and $c$ is the critical value of $H(\cdot, \cdot , 0)$.

When $H$ is convex and superlinear in $p$ and non--decreasing in $u$, under an additional non--degeneracy condition, we obtain existence and uniqueness (with comparison principles) results of solutions and we prove that the family of solutions $(u_\lambda)_{\lambda >0}$ converges to a specific solution $u_0$ of  
$$ H(x, d_x u_0, 0)=c\ \ \textup{in $M$}.$$
Our degeneracy condition requires $H$ to be increasing (in $u$) on localized regions linked to the support of Mather measures, whereas usual similar results are obtained for Hamiltonians that are everywhere increasing in $u$. 
\end{abstract}

\keywords{Discounted  Hamilton--Jacobi equations, viscosity solutions,  comparison principle, weak KAM Theory, Mather measures}

 2010 \emph{ Mathematics Subject Classification.}  \ 35B40, 35F21, 37J50, 49L25.

\section{Introduction}\label{section_Intr}
Let $M$ be a closed connected smooth manifold. In the sequel, $H: T^*M\times\R\to \R$ will be a given continuous function, called the {\em Hamiltonian}, where $T^*M$ is the cotangent bundle of $M$. We denote by $(x,p,u)$ points of $T^*M \times \mathbb R$. We will assume that $H$ is convex and superlinear in the variable $p$.

We are interested in the following approximation problem for Hamilton--Jacobi equations of the form
\begin{equation}\label{intro_eq}
	H(x, d_x u_\lambda, \lambda u_\lambda(x))=c\ \ \textup{in $M$}
\end{equation}
where $\lambda>0$ is called a discount factor in view of stochastic optimal control and differential games. Note that in the limit case $\lambda=0$, there is a unique value $c\in \R$, the so-called {\em critical value}, such that
\begin{equation}\label{intr_criteq}
	H(x, d_xu, 0)=c \ \ \textup{in $M$}
\end{equation} 
admits solutions, so we will use this value $c$ in \eqref{intro_eq} as well.  The notion of {\em solution, subsolution and supersolution}  adopted in this paper is in the viscosity sense \cite{Crandall_Lions1983}, see also the books \cite{Barles_book, Bardi_Capuzzo-Dolcetta1997}.

Our goal in this paper is to explore the asymptotic behavior, as $\lambda\to 0^+$, of the viscosity solutions of \eqref{intro_eq} in a setting where $H(x,p,u)$ can be nonlinear in $u$, and not necessarily strictly increasing in $u$ everywhere. Such equations will be called {\em nonlinear degenerate discounted H-J equations}. Note that this asymptotic problem is not easy because the critical equation \eqref{intr_criteq} has infinitely many solutions, even up to additive constants in general. As will see later, Mather (minimizing) measures play a central role in the convergence of $u_\lambda$ (as $\lambda\rightarrow 0^+$).

Our approximation problem originates from the seminal paper by Lions, Papanicolaou and Varadhan \cite{Lions_Papanicolaou_Varadhan1987} in 1987  on homogenization of Hamilton--Jacobi equations. In this work, they considered the {\em vanishing discount problem} (also called {\em ergodic approximation})
\begin{equation}\label{Discounted_equation}
	\lambda u_\lambda(x)+G(x, d_x u_\lambda)=0 \ \ \textup{in $M$}
\end{equation} 
and discussed the asymptotic behavior of the solutions as $\lambda\to 0^+$. This is of course a particular case of our setting by choosing $H(x,p,u)=u+G(x,p)$. Equation \eqref{Discounted_equation} obeys the comparison principle and admits a viscosity solution denoted by $u_\lambda$. They pointed out $\lim_{\lambda\rightarrow 0_+}\lambda u_\lambda=-c(G)$ with $c(G)$ being the critical value of the system $G$. Therefore, the next step is to investigate the convergence of $u_\lambda+c(G)/\lambda$ as $\lambda\rightarrow 0^+$ as well. It turns out that $\{u_\lambda+c(G)/\lambda\}_{\lambda>0}$ are equibounded and equi-Lipschitz, thus, by the Ascoli-Arzel\`a theorem and the stability property of viscosity solutions \cite{Crandall_Evans_Lions1984}, they converge, \emph{along subsequences} as $\lambda$ goes to $0$, to viscosity solutions of the critical equation 
\begin{equation}\label{criteq_G}
	G(x,d_x u)=c(G) \ \ \textup{in $M$.}
\end{equation}
At that time, it was not clear if different subsequences yield the same limit solution of \eqref{criteq_G}. This problem remained unsolved for quite some time.  Some constraints on the possible limit solutions were found in \cite{Gomes2008, Iturriaga_Sanchez-Morgado2011}. The essential breakthrough was made by \cite{DFIZ2016}, where they established the convergence to a unique limit solution as well as the characterization of the limit, by using tools from weak KAM theory. We also refer to \cite{Maro_Sorrentino2017, Arnaud_Su} for geometric interpretation of the convergence.

The result of asymptotic convergence has been subsequently generalized to the second order HJ setting \cite{Mitake_Tran2017,Ishii_Mitake_Tran2017_1, Ishii_Mitake_Tran2017_2}. Nowadays, this topic is still very active and progress has been made in many different directions. We mention some here: \cite{DFIZ_Math_Z_2016} for a discrete time version, \cite{Cardaliaguet_Porretta_MFG19} for mean field games,
\cite{CCIZ2019, Gomes_Mitake_Tran2018, WYZ_2021, QC} for contact Hamilton--Jacobi versions, \cite{Davini_Siconolfi_Zavidovique_18, Davini_Zavidovique2017, Ishii2019vanishing, Ishii_Jin2020} for weakly coupled systems, \cite{Ishii_Siconolfi2020, Davini2022} for a noncompact setting, \cite{Davini_Wang2020,WYZ_negative} for HJ equations with negative discount factors. 

We point out that all papers mentioned above required a strict monotonicity hypothesis. A recent work \cite{MZ} first studied some degenerate HJ equations of the form
\begin{equation}\label{intro_dg}
	\lambda \alpha(x) u_\lambda(x)+G(x, d_xu_\lambda)=c(G) \ \ \textup{in $M$,}
\end{equation}
where $\alpha: M\to \R$ is non-negative and may vanish at some places. The author proved that the convergence still holds if $\alpha>0$ on the projected Aubry set. This sheds some light on dealing with the convergence of solutions for systems with degeneracy.  %From an economical viewpoint, \cite{MZ} provides a model for settings in which interest rates depend on the space variable and are allowed to vanish at some places. 

In the present paper, we will establish an asymptotic convergence result for nonlinear discounted HJ equations \eqref{intro_eq} with degenerate discounting. We will show the importance of Mather measures in controlling the convergence of solutions. As merely a subset of the Aubry set, the Mather set allows a better understanding of systems, see our condition {\bf(L4)}. As far as we know such a condition {\bf(L4)} is new in the literature. This condition may be optimal for our convergence problem (see Section \ref{section_counterexample} for a counterexample showing some optimality), and in addition, we also discuss how to use {\bf(L4)} to obtain a generalized comparison principle. As the Hamiltonians $H(x,p,u)$ involved in our equation \eqref{intro_eq} depends implicitly or nonlinearly on the unknown function $u$, there is in general no explicit representation formula for the value function $u_\lambda$ to equation \eqref{intro_eq}. To overcome this difficulty, %we will show that a solution of equation is in fact a solution of the HJ equation of a specific Hamiltonian to which we can apply tools from weak KAM theory
we borrow tools from weak KAM theory and use arguments that do not rely on explicit representation formulas of the value functions. That is another novelty comparing to previous works such as \cite{DFIZ2016, MZ} and provides a chance to further apply our results.

\subsection{Setting}
Let $M$ be a closed connected smooth manifold endowed with a Riemannian metric. As $M$ is compact, all such Riemannian metrics are equivalent and all our results are independent of this choice. Let $T^*M$ and $TM$ be the cotangent and tangent bundles of $M$, respectively. For $x\in M$, we will use  $\lVert\cdot\rVert_x$  to denote the norm associated to $g$, either on $T^*M$ or on $TM$.

On this compact manifold $M$, we consider a continuous Lagrangian $L(x,v,u): TM\times \R\to \R$ which satisfies the following conditions:
\begin{itemize}[itemindent=1em]
\item[\bf(L0)] (Monotonicity) For every given $(x,v)\in TM$, $L(x,v,u)$ is non-increasing in 
$u\in\R$. 
\item[\bf(L1)] (Convexity) For every given $x\in M$ and $u\in \R$,  $L(x,v,u)$ is convex in $v\in T_xM$.
\item[\bf(L2)] (Superlinearity) For every $K,u_0\in [0,+\infty[ $, we have
$$\sup\{ K \lVert v\rVert_x-L(x,v,u)\mid (x,v,u)\in TM\times [-u_0,u_0]\}<+\infty.$$
\end{itemize}
Note that condition {\bf(L0)} implies $L(x, v, u) \geq L(x, v, u_0)$ for all $u\leq u_0$, so it suffices to assume the superlinearity for $L(x, v, u_0)$ in {\bf(L2)}.

By Fenchel's formula, the Lagrangian $L$ has a conjugated Hamiltonian $H: T^*M\times \R\to \R$, i.e.,
\begin{equation}\label{def H}
H(x,p,u):=\sup_{v\in T_xM} p(v) -
L(x,v,u),\text{ for every $(x,p,u)\in T^*M\times \R$}.
\end{equation}
The Hamiltonian $H$ is continuous, and conditions {\bf(L0)}--{\bf(L2)} on $L$ directly lead  to the following conditions on $H$:
\begin{itemize}[itemindent=1em]
\item[\bf (H0)] (Monotonicity) For every given $(x,p)\in T^*M$, $H(x,p,u)$ is non--decreasing in 
$u\in\R$. 
\item[\bf(H1)] (Convexity) For every given $x\in M$ and $u\in\R$, $H(x,p,u)$ is convex in $p\in T^*_xM$.
\item[\bf(H2)]  (Superlinearity) For every $K,u_0\in [0,+\infty[ $, we have
$$\sup\{ K \lVert p\rVert_x-H(x,p,u)\mid (x,p,u)\in T^*M\times [-u_0,u_0]\}<+\infty.$$
\end{itemize}

For brevity, we can introduce the Lagrangian $L^r:TM\to \R, (x,v)\mapsto  L(x,v,r)$ for each fixed $r\in \R$, and its associated Hamiltonian $H^r:T^*M\to\R$, $(x,p)\mapsto H(x,p,r)$. Accordingly, we denote by $c(H^r)$ the critical value of $H^r$.

%Therefore, we can find  a global continuous viscosity solution $ u:M\to\R$ of
%\begin{equation}\label{HJASt}H^r(x,d_x u)=c(H^r).
%\end{equation}

We aim to consider Hamilton--Jacobi equations of the form
\begin{equation}\label{HJlambda}
H(x,d_xu_\lambda,\lambda u_\lambda(x))=c(H^0) \quad \textup{in $M$}\tag{HJ$_\lambda$}
\end{equation}
where $\lambda>0$. We are interested in understanding the asymptotic behavior of the global continuous viscosity solutions $u_\lambda : M\to \R$ as $\lambda$ goes to zero.

To this aim, we now introduce another condition {\bf(L3)} on the Lagrangian $L$.
\begin{itemize}[itemindent=1em]
 \item[\bf(L3)] ($u$-derivative at $0$) 
 For every $(x,v)\in TM$, the partial derivative $\partial L/\partial u(x,v,0)$ exists. Moreover, for every compact subset $S\subset TM$, we can find  a modulus of continuity $\eta_S$ such that
 $$\left\lvert L(x,v,u)-L(x,v,0)-u\frac{\partial L}{\partial u}(x,v,0)\right\rvert\leq \lvert u\rvert \eta_S(\lvert u\rvert),
 \text{ for all $(x,v,u)\in S\times \R.$}$$
\end{itemize}
Recall that  a modulus of continuity is a non--decreasing function $\eta_S:[0,+\infty[\to[0,+\infty[$ with $\lim_{u\to 0}\eta_S(u)=0$. 

Actually, condition {\bf(L3)} is quite general, in view of the fact that it holds whenever $L$ is $C^1$. 

In order to formulate the last condition, we will need to use the notion of Mather measure (see Section \ref{sectionNon-u}) for the critical Lagrangian $L^0$. We denote by $\Mis(L^0)$ the set of all Mather measures of $L^0$.
\begin{itemize}[itemindent=1em] 
 \item[\bf(L4)] (Non--degeneracy on Mather set) 
 $$\int_{TM}\frac{\partial L}{\partial u}(x,v,0)\,\dd\tilde\mu(x,v)<0,\text{ for all Mather measures 
 $\tilde\mu\in\Mis(L^0)$}.
 $$
\end{itemize}

Such an integral form of monotonicity condition w.r.t. $u$ concerning Mather measures is new, as far as we know. It is crucial in the study of the asymptotic behavior of solutions, and is also the key to obtaining a comparison principle for equation \eqref{HJlambda} (see  Section \ref{section_uniqueness} for more discussion about {\bf(L4)}).

\subsection{Main results}

\subsubsection{Convergence of the solutions of  \texorpdfstring{\eqref{HJlambda}}{}}
Let us first present the following result:

\begin{thm}\label{mythm0}
	If the Lagrangian $L:TM\times \R\to\R$ satisfies conditions {\bf (L0)}, {\bf(L1)}, {\bf(L2)}, {\bf(L3)} and {\bf(L4)}, then for every $\lambda>0$  equation \eqref{HJlambda} admits global continuous viscosity solutions. 
	
	Moreover, the viscosity solutions of \eqref{HJlambda}, with $\lambda>0$, are equibounded. Namely, we can find a constant $\mathfrak{C}>0$ such that for any global continuous viscosity solution $u_\lambda$ of \eqref{HJlambda} and any $\lambda>0$,
	\begin{equation*}
		\lVert u_\lambda\rVert_\infty\leq \mathfrak {C}.
	\end{equation*}
	  In addition, for every given $\lambda_0>0$, the family of viscosity solutions of \eqref{HJlambda} for all $\lambda\in ]0, \lambda_0]$, is equi-Lipschitz. 
	\end{thm}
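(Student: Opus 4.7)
The plan is to (i) produce existence of $u_\lambda$ by Perron's method, using critical solutions translated by constants as sub- and supersolutions; (ii) derive a uniform $L^\infty$ bound by testing the subsolution inequality against Mather measures of $L^0$ and invoking {\bf(L4)}; (iii) upgrade the uniform bound to an equi-Lipschitz bound via the superlinearity {\bf(H2)}.

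\textbf{Existence.} Fix a continuous viscosity solution $v_0$ of the critical equation \eqref{intr_criteq}, whose existence is classical weak KAM theory. By monotonicity {\bf(H0)}, the translate $v_0 + a$ is a subsolution of \eqref{HJlambda} for $a \leq -\max v_0$, because then $\lambda(v_0 + a) \leq 0$ and $H(x, d_x v_0, \lambda(v_0+a)) \leq H(x, d_x v_0, 0) = c(H^0)$ in the viscosity sense; symmetrically $v_0 + a$ is a supersolution for $a \geq -\min v_0$. These ordered sub/supersolutions feed Perron's method applied to the continuous Hamiltonian $(x, p, u) \mapsto H(x, p, \lambda u) - c(H^0)$, which is non-decreasing in $u$ and convex/superlinear in $p$, producing a continuous viscosity solution $u_\lambda$ wedged between them.

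\textbf{Uniform bound.} Let $u_\lambda$ be any continuous viscosity solution of \eqref{HJlambda}. By {\bf(H2)} it is locally Lipschitz, hence differentiable $\tilde\mu$-a.e.\ by Rademacher for any probability $\tilde\mu$ on $TM$. Fix $\tilde\mu \in \mathfrak{M}(L^0)$. Applying Fenchel's inequality pointwise $\tilde\mu$-a.e.\ together with the subsolution property gives
\[
d_x u_\lambda(v) \leq L(x, v, \lambda u_\lambda(x)) + H(x, d_x u_\lambda, \lambda u_\lambda(x)) \leq L(x, v, \lambda u_\lambda(x)) + c(H^0).
\]
Integrate against $\tilde\mu$, use closedness of $\tilde\mu$ ($\int d_x u_\lambda(v)\, d\tilde\mu = 0$) and the Mather identity $\int L(x,v,0)\, d\tilde\mu = -c(H^0)$ to obtain
\[
\int \bigl[L(x, v, \lambda u_\lambda(x)) - L(x, v, 0)\bigr]\, d\tilde\mu \geq 0.
\]
Linearize via {\bf(L3)} to rewrite the left side as
\[
\lambda \int u_\lambda(x)\,\frac{\partial L}{\partial u}(x, v, 0)\, d\tilde\mu + O\bigl(\lambda \|u_\lambda\|_\infty\, \eta(\lambda\|u_\lambda\|_\infty)\bigr) \geq 0.
\]
Condition {\bf(L4)} supplies a strictly negative coefficient, so this translates into one-sided control of $u_\lambda$ averaged against the projection of $\tilde\mu$ onto $M$, yielding a point of the projected Mather set where $u_\lambda$ is not too large. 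A comparison against translates $v_0 + a$ of a critical subsolution, combined with weak KAM propagation along calibrated curves of $v_0$ (which reach every point of $M$ from the projected Aubry set), transfers this Mather-set control to a global pointwise upper bound on $u_\lambda$; the symmetric lower bound follows by a dual/supersolution variant. This produces a uniform $\mathfrak{C} > 0$ with $\|u_\lambda\|_\infty \leq \mathfrak{C}$.

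\textbf{Equi-Lipschitz and main obstacle.} Given $\lambda_0 > 0$, for $\lambda \in (0,\lambda_0]$ we have $|\lambda u_\lambda(x)| \leq \lambda_0 \mathfrak{C}$. Applying {\bf(H2)} with $K = 1$ on the compact slab $T^*M \times [-\lambda_0 \mathfrak{C}, \lambda_0 \mathfrak{C}]$ furnishes a constant $C_1$ with $\|p\|_x \leq H(x, p, u) + C_1$; the subsolution inequality then forces $\|d_x u_\lambda\|_x \leq c(H^0) + C_1$ a.e., a uniform Lipschitz bound. The main difficulty lies in the uniform bound: without pointwise strict monotonicity in $u$, the classical comparison principle is unavailable, and {\bf(L4)}---an integrated strict monotonicity against Mather measures---must replace it via the Fenchel/closed-measure argument above plus weak KAM propagation from the Mather set to all of $M$.
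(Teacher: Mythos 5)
Your existence step (Perron between the translates $\hat u-\max_M\hat u$ and $\hat u-\min_M\hat u$) and your equi-Lipschitz step (once $\lvert\lambda u_\lambda\rvert\le\lambda_0\mathfrak C$, coercivity on the slab gives a uniform gradient bound) are fine and essentially the paper's Proposition \ref{prop_exsitulam} and Lemma \ref{Lip1}. The gap is in the uniform bound, in two places. First, the linearization step is circular: from $\int_{TM}\bigl[L(x,v,\lambda u_\lambda(x))-L(x,v,0)\bigr]\,\dd\tilde\mu\ge 0$ you pass via {\bf(L3)} to $\lambda\int u_\lambda\,\tfrac{\partial L}{\partial u}(x,v,0)\,\dd\tilde\mu+O\bigl(\lambda\lVert u_\lambda\rVert_\infty\,\eta_S(\lambda\lVert u_\lambda\rVert_\infty)\bigr)\ge0$ and invoke {\bf(L4)}. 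But after dividing by $\lambda$ the error is $\lVert u_\lambda\rVert_\infty\,\eta_S(\lambda\lVert u_\lambda\rVert_\infty)$, and neither factor is controlled: $\lVert u_\lambda\rVert_\infty$ is exactly the quantity to be bounded, and $\eta_S$ is small only when its argument is, which for a fixed (not small) $\lambda$ you do not know a priori --- recall the statement is for every $\lambda>0$. So ``a point of the projected Mather set where $u_\lambda$ is not too large'' does not follow as written. The non-circular route, which is what the paper's Lemma \ref{Majoration} does, uses no linearization here: if $u_\lambda>0$ on $\pi(\supp\tilde\mu)$, monotonicity {\bf(L0)} turns the integral inequality into an equality, forces $L(x,v,\cdot)$ to be constant on $[0,\lambda u_\lambda(x)]$ over $\supp\tilde\mu$, hence $\tfrac{\partial L}{\partial u}(x,v,0)=0$ there, contradicting {\bf(L4)}. (Also, Rademacher does not give differentiability $\tilde\mu$-a.e.: the projected Mather measure is typically singular with respect to Lebesgue measure; you need either the closed-measure inequality of Proposition \ref{PropertiesMatherLambda} or the smooth approximation of subsolutions as in the proof of Lemma \ref{lem_ugeqsubsol}.)

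Second, and more seriously, the lower bound is missing. Your Fenchel-against-Mather-measure device uses only the subsolution half of the equation, and it can only ever produce points where $u_\lambda\le 0$: if $u_\lambda<0$ on $\pi(\supp\tilde\mu)$, then $L(x,v,\lambda u_\lambda(x))\ge L(x,v,0)$ and the inequality is automatically satisfied, so no contradiction with {\bf(L4)} arises and no point with $u_\lambda\ge0$ is detected. A genuine ``dual variant'' must use the solution (supersolution) property of $u_\lambda$, i.e.\ closed measures built from $u_\lambda$'s own backward calibrated curves --- which is precisely the mechanism of the paper's Lemma \ref{Gen-equibounded} and Lemma \ref{Minoration}. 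Moreover, even granted one point where $u_\lambda\ge0$, your transfer argument has no analogue on the lower side: on $\{u_\lambda<0\}$ the function is only a critical supersolution, and supersolutions carry no gradient bound, so the single-point information cannot be propagated by a Lipschitz estimate; the propagation has to go along calibrated curves of $u_\lambda$ starting at the extremum of $u_\lambda-\hat u$, as in the paper. For the upper bound your transfer can be repaired, but not as stated: the clean version is that $u_\lambda$ is a critical subsolution on the open set $\{u_\lambda>0\}$ (by {\bf(H0)}), hence has gradient bounded by $K_{c(H^0)}$ there, and one point with $u_\lambda\le0$ then gives $\max_M u_\lambda\le K_{c(H^0)}\,\mathrm{diam}(M)$; the proposed ``comparison against translates $v_0+a$ plus weak KAM propagation along calibrated curves of $v_0$'' is not a proof, since along a $v_0$-calibrated curve you cannot compare $L(\cdot,\cdot,\lambda u_\lambda)$ with $L^0$ without already knowing the sign of $u_\lambda$ along the curve.
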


Since the Hamiltonian $H(x,p,u)$ is not necessarily strictly increasing in $u$,  it is not clear whether equation \eqref{HJlambda} with $\lambda >0$ obeys a comparison principle. Nonetheless, thanks to condition {\bf(L4)}, the solutions $u_\lambda$'s of \eqref{HJlambda} are equibounded for all $\lambda>0$. Based on Theorem \ref{mythm0}, we give the main result on the asymptotic behavior, as $\lambda\to 0^+$, of the viscosity solutions of \eqref{HJlambda} as the following:

\begin{thm}\label{main} If the Lagrangian $L:TM\times \R\to\R$ satisfies conditions {\bf (L0)}, {\bf(L1)}, {\bf(L2)}, {\bf(L3)} and {\bf(L4)}, then there exists a continuous function $u_0:M\to \R$ such that:
\begin{enumerate}
\item The family of all  possible viscosity solutions $(u_\lambda)_{\lambda>0}$ of \eqref{HJlambda} converges uniformly to $u_0$ as $\lambda\to 0$.
\item The particular function $u_0$ is  a viscosity solution of the critical Hamilton--Jacobi equation
\begin{equation}\label{HJ000} H^0(x,d_x u)=c(H^0) \quad \textup{in $M$}.
\end{equation}
The function $u_0:M \to \R$ is also the largest viscosity subsolution $w$ of \eqref{HJ000} satisfying 
\[\int_{TM}w(x)\frac{\partial L}{\partial u}(x,v,0)\,\dd\tilde \mu(x,v)\geq0,\ \  \text{for all Mather measures $\tilde\mu\in\Mis(L^0)$.}\]	
\end{enumerate}
\end{thm}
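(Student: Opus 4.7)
The plan is to prove that every subsequential uniform limit of the family $(u_\lambda)_{\lambda > 0}$ coincides with the largest element $u_0$ of the admissible class
\[
\mathcal{F} := \Bigl\{ w \in C(M) \;:\; w \text{ is a viscosity subsolution of }\eqref{HJ000},\ \int_{TM} w\,\tfrac{\partial L}{\partial u}(\cdot,\cdot,0)\,d\tilde\mu \geq 0\ \forall\, \tilde\mu \in \Mis(L^0)\Bigr\}.
\]
Compactness is immediate from Theorem \ref{mythm0}: the family is equibounded and equi-Lipschitz on each interval $(0,\lambda_0]$, hence relatively compact in $C(M)$ by Arzelà--Ascoli, and stability of viscosity solutions under uniform convergence gives that any cluster point is a viscosity solution of \eqref{HJ000}. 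It therefore suffices to prove that every subsequential limit $u^*$ both belongs to $\mathcal{F}$ and dominates every $w \in \mathcal{F}$; these two facts force uniqueness of $u^*$, yield convergence of the full family, and identify $u^* = u_0$ as the largest element of $\mathcal{F}$, thereby proving both (1) and (2).

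For the inclusion $u^* \in \mathcal{F}$, I would test the equation \eqref{HJlambda} against an arbitrary Mather measure $\tilde\mu \in \Mis(L^0)$. The Fenchel--Young inequality applied a.e.\ to the Lipschitz solution $u_\lambda$ gives
\[
d_x u_\lambda(v) \;\leq\; H(x, d_x u_\lambda, \lambda u_\lambda(x)) + L(x,v,\lambda u_\lambda(x)) \;=\; c(H^0) + L(x, v, \lambda u_\lambda(x)),
\]
and integration against the closed/holonomic $\tilde\mu$ makes the left-hand side vanish. Combined with $\int L^0\,d\tilde\mu = -c(H^0)$ and the Taylor expansion provided by (L3)---uniform because $\lambda u_\lambda$ is globally bounded by Theorem \ref{mythm0}---this gives
\[
0 \;\leq\; \lambda \int_{TM} u_\lambda(x)\,\tfrac{\partial L}{\partial u}(x,v,0)\,d\tilde\mu(x,v) + o(\lambda).
\]
Dividing by $\lambda$ and sending $\lambda \to 0$ along the convergent subsequence yields the required Mather integral inequality for $u^*$.

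The reverse inequality $u^* \geq w$ for every $w \in \mathcal{F}$ is the main obstacle. Following the measure-theoretic blueprint of \cite{DFIZ2016, MZ} but adapted to the nonlinear-in-$u$ setting, I would attach to each $\lambda$ a holonomic probability measure $\tilde\mu_\lambda$ on $TM$---produced from calibrating curves of $u_\lambda$ or from Euler--Lagrange invariant measures of the discounted Lagrangian $L(\cdot,\cdot,\lambda u_\lambda(\cdot))$---and prove that any weak-$*$ subsequential limit $\tilde\mu_\infty$ of $\tilde\mu_\lambda$ lies in $\Mis(L^0)$. Testing the critical subsolution inequality $d_x w(v) \leq L^0(x,v) + c(H^0)$ against $\tilde\mu_\lambda$, using the matching equality $d_x u_\lambda(v) = L(x,v,\lambda u_\lambda) + c(H^0)$ that must hold a.e.\ on the support of $\tilde\mu_\lambda$, expanding via (L3), then dividing by $\lambda$ and passing to the limit at a minimizer of $u^* - w$ should produce an estimate combining the Mather inequality $\int w\,\partial_u L\,d\tilde\mu_\infty \geq 0$ enjoyed by $w$ with the strict non-degeneracy $\int \partial_u L\,d\tilde\mu_\infty < 0$ coming from (L4) to force $\min_M(u^* - w) \geq 0$. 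The principal difficulty lies precisely in this construction: in the absence of a Lax--Oleinik representation for $u_\lambda$ (owing to the nonlinear $u$-dependence), the measures $\tilde\mu_\lambda$ must be produced implicitly, and the identification of their weak-$*$ limits with bona fide Mather measures of $L^0$ requires delicate uniform control of the $o(\lambda)$ remainder in (L3) together with a decisive use of (L4) to pin down the support of the limit on the Mather set.
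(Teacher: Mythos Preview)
Your high-level architecture matches the paper: compactness via Theorem~\ref{mythm0}, the upper bound $u^*\in\mathcal F$ by testing against Mather measures (this is exactly Corollary~\ref{cor_subpositive}), and a measure-theoretic lower bound $u^*\geq w$ for $w\in\mathcal F$. The first two steps are fine.

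The lower bound sketch, however, has a genuine gap. You propose to attach to each $\lambda$ a \emph{holonomic} measure $\tilde\mu_\lambda$ and argue at a minimizer of $u^*-w$. But a closed measure kills both $\int d_xu_\lambda(v)\,d\tilde\mu_\lambda$ and $\int d_xw(v)\,d\tilde\mu_\lambda$, so subtracting the two calibration/subsolution relations gives no pointwise information about $u_\lambda-w$; there is nothing left to ``divide by $\lambda$.'' The paper's construction is different in two essential ways. First, the measures are built \emph{pointwise}: for each $x$ one takes a $u_\lambda$-calibrated curve $\gamma^x_\lambda:{]-\infty,0]}\to M$ with $\gamma^x_\lambda(0)=x$ and defines $\tilde\mu^x_\lambda$ by integrating against the \emph{nonlinear exponential weight}
\[
e^{\lambda\int_t^0 \frac{\partial L}{\partial u}(\gamma^x_\lambda(s),\dot\gamma^x_\lambda(s),0)\,ds},
\]
a genuine replacement for the $e^{\lambda t}$ of the linear discounted problem (see~\eqref{probmeasure_defnition}). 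These measures are \emph{not} closed; their $O(\lambda)$ defect from closedness carries exactly the boundary term $u_\lambda(x)$. Second, condition~{\bf(L4)} together with Lemma~\ref{essentiel} is used (Corollary~\ref{cortwoestim}) to show the weight is integrable on $]-\infty,0]$ and that $\lambda\int_{-\infty}^0(\text{weight})\,dt$ has a finite nonzero limit. An integration by parts against this weight (Lemma~\ref{lem_ugeqsubsol}) then produces the pointwise inequality
\[
u_\lambda(x)\ \geq\ w(x)\ -\ \frac{\int_{TM} w(y)\,\frac{\partial L}{\partial u}(y,v,0)\,d\tilde\mu^x_\lambda}{\int_{TM}\frac{\partial L}{\partial u}(y,v,0)\,d\tilde\mu^x_\lambda}\ +\ o(1),
\]
valid at \emph{every} $x$. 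One then shows (Proposition~\ref{prop_relativecompact}) that weak limits of $\tilde\mu^x_\lambda$ are Mather measures of $L^0$; for $w\in\mathcal F$ the ratio becomes $\leq 0$ in the limit by~{\bf(L4)}, giving $u^*(x)\geq w(x)$ directly. No minimizer argument is used or needed. The missing idea in your proposal is precisely this exponential weight tailored to $\partial_uL$ and the resulting pointwise estimate; without it the ``divide by $\lambda$'' step has no content.
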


\begin{rem}
For each $\lambda>0$, let us denote by $\mathcal{F}_\lambda$ the set of all continuous viscosity solutions of equation \eqref{HJlambda}. Then Theorem \ref{main} implies that 
\begin{equation*}
	\dist_H(\mathcal{F}_\lambda, \{u_0\} )\to 0, \quad \textup{as $\lambda\to 0^+$}
\end{equation*}
where $\dist_H(\cdot,\cdot)$ denotes the Hausdorff distance between two sets in $C^0(M,\R)$, or equivalently 
$$\sup_{w_\lambda\in \mathcal{F}_\lambda}\|w_\lambda-u_0\|_\infty  \to 0, \quad \textup{as $\lambda\to 0^+$}.$$
\end{rem}

We also provide another formula for the limit $u_0$. This formula involves the Peierls barrier (see Section \ref{sectionNon-u}) and Mather measures.

\begin{thm}\label{mythm_anotherformula}
The limit solution $u_0:M\to \R$ obtained in Theorem \ref{main} satisfies 
\begin{equation*}\label{In_Math_meas}
		u_0(x)=\inf_{\tilde{\mu}\in \Mis(L^0) }\frac{\int_{TM}h(y,x)\frac{\partial L}{\partial u}(y,v,0)\,\dd\tilde{\mu}(y,v)}{\int_{TM}\frac{\partial L}{\partial u}(y,v,0)\,\dd\tilde{\mu}(y,v)}.
\end{equation*}	
where $h:M\times M\to \R$ is the Peierls barrier of the Lagrangian $L^0$.
\end{thm}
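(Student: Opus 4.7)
The plan is to prove the formula by establishing $u_0 \le \psi$ and $\psi \le u_0$ on $M$, where I write $\psi(x)$ for the right-hand side of the claimed identity. Condition \textbf{(L0)} gives $\partial L/\partial u(\cdot,\cdot,0)\le 0$ on $TM$ (with the derivative existing by \textbf{(L3)}), and \textbf{(L4)} makes $\int_{TM}\frac{\partial L}{\partial u}(y,v,0)\,\dd\tilde\mu(y,v)$ strictly negative for every $\tilde\mu\in\Mis(L^0)$, so each fraction defining $\psi(x)$ is well-defined.

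For the easy direction $u_0\le\psi$, I would fix $\tilde\mu\in\Mis(L^0)$ and use the weak-KAM inequality $u_0(x)-u_0(y)\le h(y,x)$, valid for every $y\in M$ since $u_0$ is a critical subsolution by Theorem~\ref{main}(2). Multiplying by the non-positive weight $\frac{\partial L}{\partial u}(y,v,0)$ reverses the inequality; integrating against $\tilde\mu$ yields
\begin{equation*}
u_0(x)\int_{TM}\frac{\partial L}{\partial u}(y,v,0)\,\dd\tilde\mu(y,v) - \int_{TM} u_0(y)\frac{\partial L}{\partial u}(y,v,0)\,\dd\tilde\mu(y,v) \ge \int_{TM} h(y,x)\frac{\partial L}{\partial u}(y,v,0)\,\dd\tilde\mu(y,v).
\end{equation*}
The second integral on the left is $\ge 0$ by the integral constraint in Theorem~\ref{main}(2); rearranging and dividing by the strictly negative factor $\int\frac{\partial L}{\partial u}(y,v,0)\,\dd\tilde\mu$ (which flips the inequality once more) gives $u_0(x)\le\frac{\int h(y,x)\,\partial L/\partial u\,\dd\tilde\mu}{\int\partial L/\partial u\,\dd\tilde\mu}$. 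Taking the infimum over $\tilde\mu$ yields $u_0\le\psi$.

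For the harder direction $\psi\le u_0$, the natural strategy is to construct, for each $x\in M$, a Mather measure $\tilde\mu_x$ witnessing the bound by a limiting procedure on the discounted problem. Using the Lagrangian action representation underlying $u_\lambda(x)$, I would take near-minimizing trajectories $\gamma_\lambda$ and form suitably normalized occupation measures $\tilde\mu_\lambda$ on $TM$. The first-order expansion provided by \textbf{(L3)},
\begin{equation*}
L(y,v,\lambda u_\lambda(y)) = L^0(y,v) + \lambda u_\lambda(y)\frac{\partial L}{\partial u}(y,v,0) + o(\lambda),
\end{equation*}
is uniform on compacts in view of the uniform bound $\lVert u_\lambda\rVert_\infty\le\mathfrak C$ from Theorem~\ref{mythm0}; combined with the equi-Lipschitz estimate and the uniform convergence $u_\lambda\to u_0$ from Theorem~\ref{main}, it should allow one to pass to a weak-$*$ limit $\tilde\mu_x$ of $\tilde\mu_\lambda$ along a subsequence, verify that $\tilde\mu_x\in\Mis(L^0)$, and deduce from the limit of the discounted action identity for $u_\lambda(x)$ that $u_0(x)$ coincides with the fraction $\frac{\int h(y,x)\,\partial L/\partial u\,\dd\tilde\mu_x}{\int\partial L/\partial u\,\dd\tilde\mu_x}$.

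The main obstacle is precisely this limiting construction. Because $H$ depends nonlinearly on $u$, no closed-form Lax-Oleinik representation of $u_\lambda$ is available to take limits of directly, and one cannot conclude $\psi\le u_0$ simply by checking that $\psi$ lies in the class of admissible subsolutions of Theorem~\ref{main}(2): a direct computation using the symmetry $h(y,x)+h(x,y)\ge 0$ on the Aubry set suggests that the integral constraint may in fact fail for the averages $\int h(y,\cdot)\,\dd\nu_{\tilde\mu}$ and so for $\psi$. One must therefore simultaneously control the \textbf{(L3)} linearization, the long-time averaging of action integrals that produces the Peierls barrier $h$, and the weak-$*$ convergence of the occupation measures to a genuinely Mather measure. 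Once this is executed, the two inequalities combine to give the asserted formula.
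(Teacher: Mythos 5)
Your first half is correct and is exactly the paper's argument for the inequality $u_0\le\hat u_0$ (the paper's Lemma \ref{lm_onesi}, where $\hat u_0$ denotes the right-hand side, your $\psi$): the weak-KAM inequality $u_0(x)\le u_0(y)+h(y,x)$, the sign of $\partial L/\partial u(\cdot,\cdot,0)$, the constraint from Theorem \ref{main}(2) (Corollary \ref{cor_subpositive}), and condition {\bf(L4)} to divide.

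The converse inequality, however, is a genuine gap: you only outline a limiting construction and explicitly leave its key step unexecuted, and the plan as stated aims at the wrong target. You do not need a Mather measure $\tilde\mu_x$ for which $u_0(x)$ \emph{equals} the fraction (such equality is not what the discounted machinery delivers): even using the paper's occupation measures $\tilde\mu^x_\lambda$ along calibrated curves (Proposition \ref{prop_relativecompact}) and the inequality of Lemma \ref{lem_ugeqsubsol} with the admissible subsolution $w=-h(\cdot,x)$, in the limit one only gets
\begin{equation*}
u_0(x)\;\ge\;\frac{\int_{TM}h(y,x)\frac{\partial L}{\partial u}(y,v,0)\,\dd\tilde\mu_x(y,v)}{\int_{TM}\frac{\partial L}{\partial u}(y,v,0)\,\dd\tilde\mu_x(y,v)}\;-\;h(x,x),
\end{equation*}
which is useful only where $h(x,x)=0$, i.e.\ on the projected Aubry set, so an extra comparison argument would still be required. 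The paper avoids the dynamical construction entirely and argues statically: $\hat u_0$ is a critical subsolution (each $h_{\tilde\mu}$ is a convex combination of the equi-Lipschitz solutions $h(y,\cdot)$, and $\hat u_0$ is an infimum of these, Proposition \ref{subsol_properties}); then for each $y$ the function $U_y:=-h(\cdot,y)+\hat u_0(y)$ is a subsolution whose membership in the class $\mathcal S_0$ of Theorem \ref{main}(2) is immediate from the very definition of $\hat u_0(y)$ as an infimum over Mather measures, so the maximality of $u_0$ gives $u_0\ge U_y$ everywhere; evaluating at $y\in\mathcal A_{L^0}$ (where $h(y,y)=0$) yields $u_0\ge\hat u_0$ on the Aubry set, and Proposition \ref{Aubry_uniqueness} propagates this to all of $M$. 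Your observation that $\hat u_0$ itself may fail the integral constraint is fair, but it does not force the limiting construction you propose; the missing idea is to test the maximality of $u_0$ against the family $U_y$ rather than against $\hat u_0$, and then to invoke the Aubry-set comparison.
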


\begin{rems} We now give some explanations about our hypotheses. 
\begin{enumerate}
	\item Our analysis strongly relies on the convexity condition {\bf(L1)} or equivalently {\bf(H1)}, because we will use a dynamical systems approach in light of weak KAM theory for convex superlinear systems. We also refer to \cite{Ziliotto_counterexample_2019} for a remarkable counterexample showing that without the convexity the family of solutions $u_\lambda$ do not converge. 
	
	\item By the superlinearity condition {\bf(L2)}, if we set
$-C(K,u_0)=\sup\{ K \lVert v\rVert_x-L(x,v,u)\mid (x,v,u)\in TM\times [-u_0,u_0]\},$
then $C(K,u_0)$ is finite and
\begin{equation}\label{SuperLinL}
 L(x,v,u)\geq K \lVert v\rVert_x+C(K,u_0),\text{ for all $ (x,v,u)\in TM\times [-u_0,u_0]$.}
\end{equation}
As above, by condition {\bf(H2)} if we set
$-C^*(K,u_0)=\sup\{ K \lVert p\rVert_x-H(x,p,u)\mid (x,p,u)\in T^*M\times [-u_0,u_0]\},$
we have $C^*(K,u_0)$ is finite and
$$H(x,p,u)\geq K \lVert p\rVert_x+C^*(K,u_0),\text{ for all $ (x,p,u)\in T^*M\times [-u_0,u_0]$.}$$
By standard arguments as in \cite{DFIZ2016, CCIZ2019}, hypothesis {\bf(H2)} can also be weakened to a coercivity assumption: 
\begin{itemize}[itemindent=1em]
 \item [\bf(H2$'$)] For every given $r\in \R $,  $H(x,p,r)\to+\infty$ as $\|p\|_x\to \infty$.
\end{itemize}
Note that due to the compactness of $M$ and the convexity of $H$ in $p$, it is equivalent to require that coercivity holds pointwise for every $x\in M$ or uniformly on $M$.

   \item Condition {\bf(L3)} always holds if $L\in C^1$. Also, it is not difficult to show that {\bf(L3)} is equivalent to the following one:
   \begin{itemize}[itemindent=1em]
 \item For every $(x,v)\in TM$, the partial derivative $\partial L/\partial u(x,v,0)$ exists. Moreover, for every compact subset $S\subset TM$, we have
 $$\sup_{(x,v)\in S} \left\lvert\frac { L(x,v,u)-L(x,v,0)}u-\frac{\partial L}{\partial u}(x,v,0)\right\rvert\to 0, 
 \text{ as $u\to 0$.}$$
 \end{itemize} 
 
    \item Condition {\bf(L4)} plays essential r\^ole in our proof.  A counterexample will be given in Section \ref{section_counterexample} to show its necessity.

\end{enumerate}
\end{rems}

\subsubsection{Local and Global uniqueness of solutions to equation  \texorpdfstring{\eqref{HJlambda}}{}}

One may ask if there is a comparison principle for equation \eqref{HJlambda} with $\lambda>0$. We first give a uniqueness result for \eqref{HJlambda} with $\lambda$ being small, by slightly strengthening condition {\bf(L3)} to the following one:
\begin{itemize}[itemindent=1em] 
 \item[\bf(L3$'$)]  For every $(x,v,u)\in TM\times \R$, the partial derivative $\partial L/\partial u(x,v,u)$ exists
 and is continuous on $TM\times \R$\footnote{As will be clear later, this condition can be weakened to only hold on a neighborhood of $TM\times \{0\}$.}.
 \end{itemize}

\begin{thm}[Local uniqueness]\label{mythm_partial} Suppose that the Lagrangian  $L:TM\times \R\to\R$ satisfies conditions {\bf (L0)}, {\bf(L1)}, {\bf(L2)}, {\bf(L3$'$)} and {\bf(L4)}. Then there exists $\lambda_1>0$ such that the Hamilton--Jacobi equation \eqref{HJlambda} has a unique viscosity solution for $0<\lambda\leq \lambda_1$. 
\end{thm}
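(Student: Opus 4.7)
The plan is to argue by contradiction, showing that the existence of two distinct solutions for arbitrarily small $\lambda>0$ is ruled out by combining weak KAM duality along backward calibrated curves with the strict integral condition \textbf{(L4)}. Suppose, for contradiction, that one can find $\lambda_n\searrow 0$ and two distinct continuous viscosity solutions $u_n,v_n$ of \eqref{HJlambda} at $\lambda=\lambda_n$. Theorem \ref{mythm0} provides a common bound $\|u_n\|_\infty,\|v_n\|_\infty\leq\mathfrak C$ and a uniform Lipschitz constant, so the velocities of the Lipschitz curves involved below lie in a fixed compact $S\subset TM$. Up to swapping the two solutions one may assume $M_n:=\max_M(u_n-v_n)>0$, realized at a point $x_n$.

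The central observation is that $v_n$ is a weak KAM solution for the continuous auxiliary Lagrangian $\tilde L_n(x,w):=L(x,w,\lambda_n v_n(x))$, which inherits convexity and superlinearity in $w$ from \textbf{(L1)}--\textbf{(L2)} and whose critical value equals $c(H^0)$. This yields a backward calibrated curve $\gamma_n:(-\infty,0]\to M$ with $\gamma_n(0)=x_n$ and
\[v_n(\gamma_n(0))-v_n(\gamma_n(-t))=\int_{-t}^0\bigl[L(\gamma_n(s),\dot\gamma_n(s),\lambda_n v_n(\gamma_n(s)))+c(H^0)\bigr]\,ds\]
for all $t>0$. Subtracting this identity from the viscosity subsolution inequality satisfied by $u_n$ along $\gamma_n$, applying \textbf{(L3$'$)} with the mean value theorem, and writing $\phi_n:=u_n-v_n$, one obtains
\[M_n-\phi_n(\gamma_n(-t))\leq\lambda_n\int_{-t}^0\phi_n(\gamma_n(s))\,\partial_u L\bigl(\gamma_n(s),\dot\gamma_n(s),\theta_n(s)\bigr)\,ds\]
with $|\theta_n(s)|\leq\lambda_n\mathfrak C$. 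Continuity of $\partial_u L$ on the compact set $S\times[-\lambda_0\mathfrak C,\lambda_0\mathfrak C]$ then allows one to replace $\partial_u L(\cdot,\cdot,\theta_n)$ by $\partial_u L(\cdot,\cdot,0)$ modulo an error $O(\eta(\lambda_n))$ with $\eta(\lambda_n)\to 0$.

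The next step is to average and take limits. Dividing by $t$ and letting $t\to\infty$, the left-hand side vanishes (it is $O(1/t)$) while the empirical measures $\mu_t^n:=\frac1t\int_{-t}^0\delta_{(\gamma_n(s),\dot\gamma_n(s))}\,ds$ cluster weak-$*$ at a Mather measure $\tilde\mu_n$ of $\tilde L_n$. Dividing further by $\lambda_n>0$ gives
\[0\leq\int_{TM}\phi_n(x)\,\partial_u L(x,w,0)\,d\tilde\mu_n(x,w)+O\bigl(\eta(\lambda_n)\bigr).\]
Since $\tilde L_n\to L^0$ uniformly on compact subsets of $TM$, the $\tilde\mu_n$ cluster weak-$*$ at some $\tilde\mu_\infty\in\Mis(L^0)$. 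By Theorem \ref{main}, both $u_n$ and $v_n$ converge uniformly to $u_0$, so $\phi_n\to 0$ and $M_n\to 0$; the normalized differences $\psi_n:=\phi_n/M_n$ remain uniformly bounded and equi-Lipschitz and equal $1$ at $x_n$, hence they cluster at a uniform limit $\psi_\infty$, and dividing the previous inequality by $M_n$ before passing to the limit yields $0\leq\int\psi_\infty(x)\,\partial_u L(x,w,0)\,d\tilde\mu_\infty(x,w)$.

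The main obstacle is the rigidity step that closes the contradiction: one must show $\psi_\infty\equiv 1$ on $\supp(\tilde\mu_\infty)$. The idea is to revisit the calibration inequality along $\gamma_n$; because $\partial_u L(\cdot,\cdot,0)\leq 0$, it forces $\psi_n\circ\gamma_n$ to stay close to its maximum $1$ over backward time-windows of length at least of order $1/\lambda_n$, which in the limit is long enough for the occupation measure to concentrate on $\supp(\tilde\mu_\infty)$. Once $\psi_\infty\equiv 1$ on $\supp(\tilde\mu_\infty)$, the inequality reduces to $0\leq\int\partial_u L(\cdot,\cdot,0)\,d\tilde\mu_\infty$, in direct contradiction with \textbf{(L4)}. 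This completes the proof modulo the weak KAM toolkit for merely continuous Lagrangians that is set up in earlier sections.
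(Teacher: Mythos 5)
Your strategy (calibrated curves, occupation measures limiting onto Mather measures, and \textbf{(L4)}) is in the right family, but as written the argument has genuine gaps exactly where the real work lies. First, the compactness claim for the normalized differences $\psi_n=\phi_n/M_n$ is unjustified: you only know $\phi_n\le M_n$ with $M_n=\max_M\phi_n>0$, and there is no a priori control of $\lVert\phi_n\rVert_\infty/M_n$ (the negative part of $\phi_n$ may be far larger than $M_n$) nor of $\mathrm{Lip}(\phi_n)/M_n$ (the numerator is only bounded uniformly in $n$, while $M_n\to 0$), so ``uniformly bounded and equi-Lipschitz'' and hence the extraction of $\psi_\infty$ do not follow. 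Second, the error from replacing $\partial L/\partial u(\cdot,\cdot,\theta_n)$ by $\partial L/\partial u(\cdot,\cdot,0)$ is of size $\eta(\lambda_n\mathfrak C)$ times bounded quantities; after dividing by $M_n$ it becomes $\eta(\lambda_n\mathfrak C)/M_n$ (or $\eta\cdot\lvert\psi_n\rvert$), and nothing prevents $M_n$ from tending to $0$ much faster than $\eta(\lambda_n\mathfrak C)$, so this remainder need not vanish in your limit. Third --- and you flag it yourself --- the rigidity step $\psi_\infty\equiv 1$ on $\supp\tilde\mu_\infty$ is not proved, and the heuristic offered does not hold up: from $M_n-\phi_n(\gamma_n(-t))\le\lambda_n\int_{-t}^0\phi_n\,\partial L/\partial u\,\dd s$ and $\partial L/\partial u\le 0$ you can conclude that the maximum propagates along $\gamma_n$ only if you already know $\phi_n\ge 0$ along the curve; if $\phi_n$ changes sign, the right-hand side can be positive of size $O(\lVert\phi_n\rVert_\infty)$ over windows of length $1/\lambda_n$, and no concentration near the maximum follows.

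This sign/propagation issue is precisely what the paper's Lemma \ref{Gen-equibounded} is designed to settle, and with it the proof works at each fixed $\lambda$, with no normalization and no double limit: apply that lemma with $\varphi=\lambda\tilde u_\lambda$, $\psi=\lambda u_\lambda$, $u=\tilde u_\lambda$, $w=u_\lambda$. Either the maximum of $\tilde u_\lambda-u_\lambda$ is attained where $\lambda(\tilde u_\lambda-u_\lambda)\le 0$, hence $\max_M(\tilde u_\lambda-u_\lambda)\le 0$; or there is a curve $\gamma:\,]-\infty,0]\to M$, calibrated for ${\bf L}^{\lambda u_\lambda}+c(H^0)$, along which $\tilde u_\lambda>u_\lambda$ and $L(\gamma,\dot\gamma,\lambda\tilde u_\lambda(\gamma))=L(\gamma,\dot\gamma,\lambda u_\lambda(\gamma))$ a.e.; then \textbf{(L0)} together with the existence of the $u$-derivative from \textbf{(L3$'$)} forces $\partial L/\partial u(\gamma(t),\dot\gamma(t),\lambda u_\lambda(\gamma(t)))=0$ a.e., which contradicts Lemma \ref{essentiel2} (the quantitative consequence of \textbf{(L4)}, obtained via Lemma \ref{essentiel} and the continuity in \textbf{(L3$'$)}) once $\lambda\le\lambda_1$ and the time window exceeds $T_0$. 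If you wish to keep your contradiction-along-$\lambda_n\to 0$ scheme, you would still need a substitute for this rigidity lemma; the averaging and normalization as you set them up do not supply it.
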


To address the uniqueness structure for all $\lambda>0$, we can introduce the following condition:
 \begin{itemize}[itemindent=1em]
 \item[\bf(L5)] (Concavity in $u$) For every given $(x,v)\in TM$, the function $ u \mapsto L(x,v,u)$  is concave\footnote{This condition is equivalent to: for every given $(x,p)\in T^*M$, the function $ u \mapsto H(x,p,u)$  is convex.}.
  \end{itemize}
  
  \begin{thm}[Global uniqueness] \label{mythm_global}Suppose that the Lagrangian  $L:TM\times \R\to\R$ satisfies conditions {\bf (L0)}, {\bf(L1)}, {\bf(L2)}, {\bf(L3)}, {\bf (L4)} and {\bf(L5)}. Then for every $\lambda>0$  the Hamilton--Jacobi equation \eqref{HJlambda} has a unique viscosity solution. 
\end{thm}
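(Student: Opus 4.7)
The plan is to establish a full comparison principle for \eqref{HJlambda} under the added hypothesis (L5), from which global uniqueness follows at once. The key structural fact that (L5) delivers is that the Hamiltonian $H(x,p,u)$ is \emph{jointly} convex in $(p,u)$: in the Fenchel identity $H(x,p,u)=\sup_v[p(v)-L(x,v,u)]$, for each fixed $v$ the integrand is affine in $p$ and, by (L5), convex in $u$, hence jointly convex in $(p,u)$; the supremum over $v$ then preserves joint convexity. Combined with (H0), for every $(x,p)$ the map $u\mapsto H(x,p,u)$ is convex and non-decreasing, so it is constant on a (possibly empty or infinite) half-line $(-\infty,u^\ast(x,p)]$ and strictly increasing on $[u^\ast(x,p),+\infty)$. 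A second useful consequence is that convex combinations of viscosity subsolutions of \eqref{HJlambda} are again subsolutions.

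Given two viscosity solutions $u$ and $v$ of \eqref{HJlambda} at some $\lambda>0$, I would assume for contradiction that $\alpha:=\max_M(u-v)>0$ and use the weak KAM machinery developed in the proofs of Theorems~\ref{main} and \ref{mythm_partial} to produce a Mather measure $\tilde\mu\in\Mis(L^0)$ ``witnessing'' this gap. Concretely, I expect to lift $u$ and $v$ to curves of the critical Lagrangian $L^0$ supported on $\tilde\mu$, that is, curves $\gamma$ along which both solutions are absolutely continuous and the viscosity sub/supersolution inequalities become equalities almost everywhere. This is where (L4) enters: integrating the identities $H(x,d_xu,\lambda u)=H(x,d_xv,\lambda v)=c$ against $\tilde\mu$, using joint convexity to estimate the $u$-variation via $\partial_u L(x,v,0)$, and invoking (L4) to bound this contribution strictly away from $0$, should yield an inequality of the form $\int(u-v)\,\dd\pi_\#\tilde\mu\leq 0$, contradicting $\alpha>0$ on the projected support of $\tilde\mu$. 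Hypothesis (L5) is crucial here: because $\partial_u L(x,v,u)$ is non-increasing in $u$ by concavity, the strict integrated monotonicity furnished by (L4) at $u=0$ extends automatically to any $u\geq 0$, which is exactly what allows the argument to run for arbitrary $\lambda>0$ rather than only for small $\lambda$ as in Theorem~\ref{mythm_partial}.

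The main obstacle is to make this pairing against $\tilde\mu$ rigorous when $u$ and $v$ are merely Lipschitz continuous, and in particular to handle the gradient contribution coming from the $p$-variation. My plan is to adapt the calibration/invariance argument already used in the proof of Theorem~\ref{main}: work along curves $\gamma$ contained in the support of $\tilde\mu$ on which $u$ and $v$ are simultaneously dominated, rewrite the gradient terms as exact time derivatives $\frac{\dd}{\dd t}[(u-v)(\gamma(t))]$, and exploit the invariance of $\tilde\mu$ under the Euler--Lagrange flow of $L^0$ to integrate out the gradient contribution and close the argument. A further technical point is that (L3), unlike (L3$'$), only guarantees the existence of $\partial_u L$ at $u=0$, so one must be careful to linearize $L$ in $u$ only at $u=0$ and then rely on (L5) to transfer the one-sided estimate to other values of $u$, rather than attempting to differentiate $L$ at $\lambda u(x)$ or $\lambda v(x)$ pointwise.
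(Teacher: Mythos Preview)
Your plan diverges substantially from the paper's argument and, as written, has real gaps.

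The paper does not integrate the Hamilton--Jacobi identities against a Mather measure, nor does it invoke joint convexity of $H$ in $(p,u)$ or any Euler--Lagrange flow. Instead it applies Lemma~\ref{Gen-equibounded} directly with $\varphi=\lambda\tilde u_\lambda$, $\psi=\lambda u_\lambda$, $u=\tilde u_\lambda$, $w=u_\lambda$. Alternative~(1) of that lemma already gives $\max_M(\tilde u_\lambda-u_\lambda)\le 0$. In alternative~(2) the lemma hands you a closed measure $\tilde\mu$ with compact support on which $\tilde u_\lambda>u_\lambda$ and
\[
\int_{TM}\!\big[L(x,v,\lambda\tilde u_\lambda(x))+c(H^0)\big]\,\dd\tilde\mu=\int_{TM}\!\big[L(x,v,\lambda u_\lambda(x))+c(H^0)\big]\,\dd\tilde\mu=0.
\]
From this and {\bf(L0)}, $L(x,v,\cdot)$ is constant on the nontrivial interval $[\lambda u_\lambda(x),\lambda\tilde u_\lambda(x)]$ for $(x,v)\in\supp\tilde\mu$. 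Here is where {\bf(L5)} is used, and differently from your plan: concavity forces this constant to be the global maximum of $r\mapsto L(x,v,r)$, and then {\bf(L0)} extends the constancy to the whole half-line $(-\infty,\lambda\tilde u_\lambda(x)]$. Picking any $u_0<0$ below $\min_M\lambda\tilde u_\lambda$ one gets $\int[L^{u_0}+c(H^0)]\,\dd\tilde\mu=0$, hence $\int[L^0+c(H^0)]\,\dd\tilde\mu=0$, so $\tilde\mu\in\Mis(L^0)$; and $L(x,v,\cdot)$ being constant on $[u_0,0]$ forces $\partial_uL(x,v,0)=0$ on $\supp\tilde\mu$, contradicting {\bf(L4)}. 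No gradient estimates, no flow invariance, no pairing of $d_xu$ against $\tilde\mu$ are needed.

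Your proposal, by contrast, leaves unexplained the two steps that actually carry the proof. First, you never say how you produce a Mather measure of $L^0$ that is tied to the putative gap $\alpha>0$; the paper's route is precisely Lemma~\ref{Gen-equibounded}, which you do not invoke. Second, your plan to ``integrate $H(x,d_xu,\lambda u)=c$ against $\tilde\mu$'' and kill the gradient part by ``invariance of $\tilde\mu$ under the Euler--Lagrange flow of $L^0$'' is not available here: $L$ is only continuous, there is no Euler--Lagrange flow, and closed measures are used as a substitute --- but closedness only kills integrals of exact $1$-forms $d_x\varphi$ with $\varphi\in C^1$, not expressions involving $d_xu$ for a merely Lipschitz $u$. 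Finally, your intended use of {\bf(L5)} (that $\partial_uL$ is non-increasing, so {\bf(L4)} ``extends to $u\ge 0$'') runs into the very issue you flag: under {\bf(L3)} alone $\partial_uL(x,v,u)$ need not exist for $u\ne 0$, so you cannot linearize at $\lambda u(x)$; the paper sidesteps this entirely by the constancy-on-a-half-line argument above, which only requires $\partial_uL$ at $u=0$.
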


Finally, we conclude with the following corollary, as a generalization of the result in \cite{MZ}.
\begin{cor}
Let $\alpha:M\to \R$ be a non-negative continuous function, and $G(x,p): T^*M\to \R$ be a continuous Hamiltonian that is convex and superlinear in $p$. If 
\begin{equation*}
	\int_{M} \alpha(x)\, \dd\mu>0,\ \ \textup{for every projected Mather measure $\mu$ of $L_G$,}
\end{equation*}
where $L_G$ is the Lagrangian associated to $G$. Then, the following equation 
\begin{equation*}
	\lambda \alpha(x) u(x) + G(x, d_x u)=c(G) \quad \textup{in $M$} 
\end{equation*}
has a unique continuous viscosity solution $u_\lambda: M\to \R$ for $\lambda>0$. Moreover, The family $u_\lambda$ uniformly converges, as $\lambda\to 0$, to a single solution to equation $G(x, d_x u)=c(G)$.
\end{cor}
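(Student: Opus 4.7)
The plan is to realize the corollary's equation as a particular case of \eqref{HJlambda} and then invoke Theorems \ref{main} and \ref{mythm_global}. Set
$$H(x,p,u) := G(x,p) + \alpha(x)\,u, \qquad (x,p,u)\in T^*M\times\R.$$
Then $H$ is continuous, convex and superlinear in $p$ (inherited directly from $G$), and non-decreasing in $u$ because $\alpha\geq 0$. Moreover $H^0(x,p) = G(x,p)$, so $c(H^0) = c(G)$, and equation \eqref{HJlambda} for this choice of $H$ is precisely the equation $\lambda\alpha(x)u + G(x,d_xu) = c(G)$ appearing in the statement.

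Passing to the Fenchel conjugate, one computes
$$L(x,v,u) = L_G(x,v) - \alpha(x)\,u,$$
so the main step is to verify conditions {\bf(L0)}--{\bf(L5)} for $L$. Conditions {\bf(L0)} (monotonicity), {\bf(L1)} (convexity in $v$), and {\bf(L2)} (superlinearity) are immediate since $\alpha$ is bounded on the compact manifold $M$ and $L_G$ itself satisfies these. Condition {\bf(L5)} holds because $L$ is affine in $u$, hence concave. For {\bf(L3)} (in fact for the stronger {\bf(L3$'$)}), the partial derivative $\partial L/\partial u(x,v,u) = -\alpha(x)$ exists and is continuous on $TM\times\R$, and the identity $L(x,v,u) - L(x,v,0) - u\,\partial_u L(x,v,0) \equiv 0$ trivially satisfies the required bound with $\eta_S\equiv 0$.

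The only condition requiring genuine input from the hypothesis is {\bf(L4)}. Since $L^0 = L_G$, the set of Mather measures $\Mis(L^0)$ coincides with $\Mis(L_G)$. For any $\tilde\mu\in\Mis(L_G)$ with projection $\mu$ on $M$,
$$\int_{TM}\frac{\partial L}{\partial u}(x,v,0)\,\dd\tilde\mu(x,v) \;=\; -\int_{M}\alpha(x)\,\dd\mu(x) \;<\; 0,$$
where the strict inequality is exactly the assumed positivity of $\int_M\alpha\,\dd\mu$ on projected Mather measures of $L_G$. With all hypotheses verified, Theorem \ref{mythm_global} yields existence and uniqueness of the continuous viscosity solution $u_\lambda$ for every $\lambda>0$, and Theorem \ref{main} yields the uniform convergence of $u_\lambda$, as $\lambda\to 0^+$, to a distinguished viscosity solution of $G(x,d_xu) = c(G)$. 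No serious obstacle is anticipated: the whole argument is essentially a dictionary between the corollary's data $(G,\alpha)$ and the general framework of the paper, with the positivity assumption on $\int_M\alpha\,\dd\mu$ translating cleanly into the non-degeneracy condition {\bf(L4)}.
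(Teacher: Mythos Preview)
Your proof is correct and follows exactly the approach indicated in the paper, which simply states that uniqueness follows from Theorem~\ref{mythm_global} and convergence from Theorem~\ref{main}. You have merely made explicit the routine verification of conditions {\bf(L0)}--{\bf(L5)} for the Lagrangian $L(x,v,u)=L_G(x,v)-\alpha(x)u$, which the paper leaves to the reader.
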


Here, the uniqueness part follows from Theorem \ref{mythm_global} and the convergence part follows from Theorem \ref{main}.

\subsection{Organization of the paper}

The proofs of our results mainly use a dynamical systems approach in light of weak KAM theory, combined with some ideas from \cite{MZ, WYZ_2021, QC}.

In Section \ref{sectionNon-u}, we recall some basic facts on viscosity solutions of the Hamilton--Jacobi equations, and some useful results and tools from weak KAM theory or Aubry-Mather theory. In Section \ref{section_bounded} we reveal properties of the viscosity solutions for specific equation \eqref{HJlambda}. In particular, we will use that any solution of \eqref{HJlambda} is in fact a solution of certain non-discounted equation, to which we can directly apply the results in Section \ref{sectionNon-u}. In Section \ref{section_a_lemma} we prove Lemma \ref{Gen-equibounded}, which plays an important role in obtaining lower and upper bounds for the family of solutions $\{u_\lambda\}_{\lambda>0}$, in the subsequent Section \ref{section_equibd}. With such a boundness estimate we then derive in Section \ref{section_equiLip} an equi-Lipschitz property for the family $\{u_\lambda\}$ as well as their calibrated curves. 

In Section \ref{section_mainproof} we deal with the crux of the proof for Theorem \ref{main}. In view of conditions {\bf(L3)} and {\bf (L4)} we will establish several technical lemmas about the $u_\lambda$-calibrated curves and their asymptotic properties. It allows us to introduce a family of Borel probability measures associated with the $u_\lambda$-calibrated curves, and in particular, we obtain a selection principle for Mather measures. Such a kind of asymptotic analysis plays a crucial role in proving the convergence.

Section \ref{section_uniqueness} is devoted to giving a comparison result for the HJ equation \eqref{HJlambda}.  Using Lemma \ref{Gen-equibounded} again and combining with condition {\bf (L4)}, we prove Theorem \ref{mythm_partial} and Theorem \ref{mythm_global}. In Section \ref{section_an_extension} we discuss the feasible extension of our convergence result. We also provide a counterexample in Section \ref{section_counterexample} to show some optimality of condition {\bf(L4)}.

%%%%%%%%%%%%%%%%%%%%%%%%%%%%%%%%%%%%%%%%%%%%%%%%%%%%%
\section{Preliminaries on \texorpdfstring{$u$}{}-independent Lagrangians and Hamiltonians}\label{sectionNon-u}
%%%%%%%%%%%%%%%%%%%%%%%%%%%%%%%%%%%%%%%%%%%%%%%%%%%%%
We will need results from weak KAM theory of convex autonomous Lagrangians and Hamiltonians independent of  $u\in\R$. These results   in the classical Aubry-Mather theory and weak KAM theory were originally established,
see \cite{Mather1991, Mather1993, Fathi_book}, for Tonelli Lagrangians or Hamiltonians,  which are by definition C$^2$. As is well known, they can be generalized  to continuous Lagrangians or Hamiltonians, due to the arguments in  \cite{Barles_book, Fathi_Siconolfi2004, Fathi_Siconolfi2005,  Fathi2012, DFIZ2016} and references therein.

Let us consider a continuous Lagrangian ${\mathfrak L}:TM\to\R$ which satisfies:
\begin{itemize}[itemindent=1em]
\item[(${\mathfrak L}1$)] (Convexity) For every given $x\in M$, the Lagrangian 
${\mathfrak L}(x,v)$ is convex in $v\in T_xM$.
 \item[(${\mathfrak L}2$)]  (Superlinearity) For every $K\in [0,+\infty[ $, we have
 $$\sup\{ K \lVert v\rVert_x-{\mathfrak L}(x,v)\mid (x,v)\in TM\}<+\infty.$$
\end{itemize}

The Hamiltonian ${\mathfrak H}:T^*M\to\R$ associated to the Lagrangian   ${\mathfrak L}:TM\to\R$ is 
$${\mathfrak H}(x,p):=\sup_{v\in T_xM} p(v) -
{\mathfrak L}(x,v),\text{ for every $(x,p)\in T^*M$}.
$$
This Hamiltonian ${\mathfrak H}$ is continuous and satisfies:
\begin{itemize}[itemindent=1em] 
\item[(${\mathfrak H}1$)] (Convexity) For every given $x\in M$, the Hamiltonian ${\mathfrak H}(x,p)$ 
is convex in $p\in T^*_xM$.
 \item[(${\mathfrak H}2$)]  (Superlinearity) For every $K\in [0,+\infty[ $, we have
 $$\sup\{ K \lVert p\rVert_x-{\mathfrak H}(x,p)\mid (x,p)\in T^*M\}<+\infty.$$
\end{itemize}

Observe that for the $u$-dependent Lagrangian  $L:TM\times \R\to\R$ satisfying conditions {\bf(L0)}--{\bf(L2)}, any Lagrangian 
$L^r:TM\to\R, \ (r\in\R)$, defined by $L^r(x,v)=L(x,v,r)$, satisfies (${\mathfrak L}1$) and (${\mathfrak L}2$). Moreover, the Hamiltonian associated to the Lagrangian $L^r$ is of course $H^r:T^*M\to\R, (x,p)\mapsto
H(x,p,r)$, and $H^r$ satisfies (${\mathfrak H}1$) and (${\mathfrak H}2$).

An important fact is that viscosity subsolutions are necessarily Lipschitz. See for instance \cite{Barles_book} or \cite[Section 5]{Fathi2012}.
More precisely, we have:

\begin{lemme}\label{Lip0} Suppose that  ${\mathfrak H}:T^*M\to\R$ is a continuous Hamiltonian satisfying condition (${\mathfrak H}2$).
Then for any given $c\in\R$, the set of viscosity subsolutions $u$ of
$${\mathfrak H}(x,d_xu)=c$$ 
is equi-Lipschitz with common Lipschitz constant 
$$K_c=\sup\{\lVert p\rVert_x\mid {\mathfrak H}(x,p)\leq c\}.$$
\end{lemme}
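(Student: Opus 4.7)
The plan is to deduce the Lipschitz bound from the Lagrangian action characterization of viscosity subsolutions, which sits at the core of weak KAM theory and fits naturally with the rest of the paper.

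Finiteness of $K_c$ comes directly from the superlinearity hypothesis. Applying (${\mathfrak H}2$) with $K = 1$ yields a finite constant $C_1$ such that ${\mathfrak H}(x,p) \geq \lVert p \rVert_x - C_1$ everywhere on $T^*M$; hence the sublevel set $\{{\mathfrak H}(x,p) \leq c\}$ is contained in $\{\lVert p \rVert_x \leq c + C_1\}$, so $K_c \leq c + C_1 < \infty$.

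For the Lipschitz bound, I would introduce the Legendre dual ${\mathfrak L}(x,v) := \sup_{p \in T^*_xM}\{p(v) - {\mathfrak H}(x,p)\}$, which by (${\mathfrak H}1$)--(${\mathfrak H}2$) is continuous, convex in $v$, and superlinear. The key input is the classical \emph{domination} characterization: a continuous $u$ is a viscosity subsolution of ${\mathfrak H}(x, d_x u) = c$ if and only if, for every absolutely continuous curve $\gamma : [a, b] \to M$,
\[ u(\gamma(b)) - u(\gamma(a)) \leq \int_a^b \bigl[{\mathfrak L}(\gamma(s), \dot\gamma(s)) + c\bigr] \, ds. \]
This bridges the pointwise viscosity condition to integral action bounds.

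Given this, fix $x_0, x_1 \in M$ and a unit-speed minimizing geodesic $\gamma_L : [0, d] \to M$ with $d = d(x_0, x_1)$. Reparametrizing $\gamma_L$ with a measurable positive speed $s(\tau)$, the action rewrites as
\[ \int_0^d \frac{{\mathfrak L}(\gamma_L(\tau), s(\tau)\dot\gamma_L(\tau)) + c}{s(\tau)} \, d\tau. \]
For each $(x, \hat v)$ with $\lVert \hat v \rVert_x = 1$, Legendre duality shows that $\inf_{s > 0} ({\mathfrak L}(x, s\hat v) + c)/s \leq K_c$: the first-order condition $s\,\partial_s{\mathfrak L}(x, s\hat v) = {\mathfrak L}(x, s\hat v) + c$ at the optimal speed forces the dual momentum $p^*$ to lie on $\{{\mathfrak H}(x, \cdot) = c\}$, where the infimum value equals $p^*(\hat v) \leq \lVert p^* \rVert_x \leq K_c$. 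A measurable selection then produces a speed profile $s(\tau)$ for which the integrand is at most $K_c + \varepsilon$, so the action is at most $(K_c + \varepsilon)\,d$. Swapping the roles of $x_0, x_1$ and letting $\varepsilon \to 0$ yields the desired estimate $|u(x_1) - u(x_0)| \leq K_c\, d(x_0, x_1)$.

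The main obstacle will be the domination inequality in the merely continuous (non-Tonelli) setting: promoting the pointwise viscosity condition to the integral form requires either a sup-convolution regularization that produces a semi-convex approximate subsolution with controlled a.e. gradient, or an equivalent variational reformulation of viscosity subsolutions. Once that is in place, the remainder reduces to routine convex analysis and a measurable selection to produce the speed profile $s(\tau)$.
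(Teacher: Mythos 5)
There is a genuine gap, and it comes in two parts. First, your route goes through the Legendre dual and the domination characterization, both of which require convexity in $p$; but Lemma \ref{Lip0} assumes only (${\mathfrak H}2$), not (${\mathfrak H}1$). For a non-convex coercive ${\mathfrak H}$, the Lagrangian you define is the dual of the convexified Hamiltonian, and your argument would at best give the Lipschitz constant $\sup\{\lVert p\rVert_x \mid \operatorname{conv}{\mathfrak H}(x,p)\leq c\}$, which can strictly exceed the stated $K_c$. Second, and more seriously, the step you defer to the end ("promoting the pointwise viscosity condition to the integral form") is precisely where the Lipschitz estimate is normally established: the standard proofs of the domination characterization (and Theorem \ref{calibrationThm} in this paper, which is stated after Lemma \ref{Lip0} and under both (${\mathfrak H}1$) and (${\mathfrak H}2$)) first use coercivity to show subsolutions are Lipschitz, then differentiate almost everywhere and apply the Fenchel inequality. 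So as written your argument is circular within the paper's logical order, and the acknowledged "obstacle" is essentially the entire content of the lemma rather than a technical refinement.

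Note also that your fallback (sup-convolution producing an approximate subsolution with a.e. gradient controlled by $K_c$) already \emph{is} the proof, with no Lagrangian detour needed: the direct argument, which is the one in the references the paper cites (Barles' book, Fathi's notes), observes that whenever a $C^1$ function $\phi$ touches a subsolution $u$ from above at $x$, then ${\mathfrak H}(x,d_x\phi)\leq c$, hence $\lVert d_x\phi\rVert_x\leq K_c$ (finite by coercivity, as you correctly argue). Thus every element of the superdifferential of $u$ has norm at most $K_c$, i.e. $u$ is a viscosity subsolution of the eikonal equation $\lVert d_xu\rVert_x=K_c$, and such subsolutions are exactly the $K_c$-Lipschitz functions (by a doubling-of-variables or sup-convolution argument). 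This gives the sharp constant, uses only (${\mathfrak H}2$), and avoids any appeal to convexity or to results proved later in the paper. Your remaining computations (the reparametrized geodesic, the identity $\inf_{s>0}({\mathfrak L}(x,s\hat v)+c)/s\leq K_c$) are essentially correct under convexity, but they belong to the theory of the Ma\~n\'e potential rather than to the proof of this lemma, and they cannot repair the two gaps above.
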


$K_c$ is of course finite by condition (${\mathfrak H}2$). The superlinearity condition (${\mathfrak H}2$) and the compactness of $M$ yield the following theorem (see for instance \cite{Lions_Papanicolaou_Varadhan1987, Fathi2012}). 
\begin{thm}\label{ExistenceViscHJSansu}  
Suppose that  ${\mathfrak H}:T^*M\to\R$ is a Hamiltonian satisfying condition (${\mathfrak H}2$). There exists a unique constant
$c=c({\mathfrak H})$, namely the critical value, such that the Hamilton--Jacobi equation 
\begin{equation}\label{HJSansu}
{\mathfrak H}(x,d_xu)=c
\end{equation} has a continuous global 
viscosity solution $u: M\to\R$. 

This constant $c({\mathfrak H})$ is the smallest constant $c\in \R$ such that \eqref{HJSansu} has a continuous 
viscosity subsolution, and also the largest  constant $c\in \R$ such that \eqref{HJSansu} has a continuous 
viscosity supersolution.
\end{thm}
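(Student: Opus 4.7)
The plan is to define $c_* := \inf\{c \in \R : \mathfrak H(x, d_x u) = c \text{ admits a continuous viscosity subsolution}\}$, show it is finite and attained by some subsolution, and then use the Lax--Oleinik semigroup both to upgrade this subsolution to a solution and to rule out sub- or supersolutions at other values of $c$.

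First, $c_*$ is finite. For any $c \geq \max_{x \in M} \mathfrak H(x,0)$ the constants are viscosity subsolutions of $\mathfrak H(x, d_x u) = c$, so the defining set is non-empty. Conversely, if $u$ is any continuous viscosity subsolution and $x_0 \in M$ realizes the maximum of $u$, the constant test function $\varphi \equiv u(x_0)$ satisfies $u - \varphi \leq 0 = (u-\varphi)(x_0)$, so the subsolution inequality forces $\mathfrak H(x_0,0) \leq c$, and hence $c \geq \min_x \mathfrak H(x,0)$. To see that the infimum is attained, I would take $c_n \downarrow c_*$ with subsolutions $u_n$. By Lemma~\ref{Lip0} the $u_n$ are equi-Lipschitz with a uniform constant; after normalizing $u_n(x_0) = 0$ at some fixed point, the Ascoli--Arzel\`a theorem and the stability of viscosity subsolutions produce a limit $u_* \in C(M)$ solving $\mathfrak H(x, d_x u_*) \leq c_*$ in the viscosity sense.

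For existence of an actual solution at $c = c_*$, I would invoke the Lax--Oleinik semigroup $(T^-_t)_{t \geq 0}$ attached to the conjugate Lagrangian $\mathfrak L$; its relevant properties rely on both the convexity $(\mathfrak H1)$ and the superlinearity $(\mathfrak H2)$. By a classical fact of weak KAM theory (see e.g.\ \cite{Fathi_book, Fathi2012}), a continuous $u$ is a subsolution (resp.\ solution) of $\mathfrak H(x, d_x u) = c$ if and only if $u \leq T^-_t u + ct$ for all $t > 0$ (resp.\ equality holds). Thus starting from the subsolution $u_*$, the family $v_t := T^-_t u_* + c_* t$ is non-decreasing in $t$; combined with the equi-Lipschitz and uniform sup bounds that superlinearity provides, monotone convergence produces a limit $u_0 \in C(M)$ fixed by every $T^-_t + c_* t$, hence a viscosity solution of $\mathfrak H(x, d_x u_0) = c_*$.

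The main obstacle is the extremal characterization of $c_*$. The ``smallest subsolution'' part is built into the definition. For the supersolution side, suppose $u$ is a continuous supersolution of $\mathfrak H(x, d_x u) = c$ with $c > c_*$, so $u \geq T^-_t u + ct$ for all $t > 0$. Let $m := \min_M(u - u_0)$, attained at some $x_* \in M$. Since $T^-_t$ is order-preserving and commutes with the addition of constants, $u \geq u_0 + m$ implies $T^-_t u \geq T^-_t u_0 + m = u_0 - c_* t + m$. Combining with $u \geq T^-_t u + ct$ yields $u - u_0 \geq m + (c - c_*) t$ on all of $M$; evaluating at $x_*$ forces $0 \geq (c - c_*) t$, contradicting $c > c_*$ and $t > 0$. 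Hence no supersolution exists for $c > c_*$; $c_*$ is therefore simultaneously the smallest $c$ admitting a subsolution and the largest $c$ admitting a supersolution, so any $c$ for which a solution exists must equal $c_* = c(\mathfrak H)$.
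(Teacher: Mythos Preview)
The paper does not actually prove this theorem: it is stated as a known result with a pointer to \cite{Lions_Papanicolaou_Varadhan1987, Fathi2012}, so there is no ``paper's own proof'' to compare against. Your argument is therefore being judged on its own.

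Your outline is essentially the standard weak KAM proof and is sound in spirit, but note one mismatch with the stated hypotheses. The theorem assumes only the superlinearity condition (${\mathfrak H}2$), whereas your existence step relies on the Lax--Oleinik semigroup and on the equivalence ``$u$ is a subsolution of $\mathfrak H(x,d_xu)=c$ iff $u\leq T^-_t u+ct$''. Both of these require the convexity condition (${\mathfrak H}1$); you acknowledge this yourself. So what you have written proves the theorem under (${\mathfrak H}1$)+(${\mathfrak H}2$), which is the only setting in which the paper ever uses it, but not under (${\mathfrak H}2$) alone as literally stated. The original proof in \cite{Lions_Papanicolaou_Varadhan1987} goes instead through the vanishing discount approximation $\lambda u_\lambda+\mathfrak H(x,d_xu_\lambda)=0$ and Perron's method, which needs only coercivity.

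One further point to tighten: in the passage ``the family $v_t:=T^-_t u_*+c_*t$ is non-decreasing \ldots\ combined with the equi-Lipschitz and uniform sup bounds that superlinearity provides, monotone convergence produces a limit'', the uniform \emph{upper} bound on $v_t$ is not automatic from superlinearity alone. You should argue that if $\min_M(v_{t_0}-u_*)\geq\delta>0$ for some $t_0$, then iterating $T^-_{t_0}$ shows $u_*$ behaves like a subsolution at level $c_*-\delta/t_0<c_*$, contradicting the minimality of $c_*$; this is where the definition of $c_*$ as an infimum is really used to cap $v_t$. With that filled in, and convexity added to the hypotheses, your proof is correct.
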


We will also need the following results, see for instance \cite{Barles_book}.
\begin{prop}\label{subsol_properties}
	Suppose that ${\mathfrak H}:T^*M\to\R$ satisfies condition (${\mathfrak H}1$). Let $u\in C(M)$, then
	\begin{itemize}
		\item[\rm 1)] If $u$ is a pointwise supremum (resp. infimum) of a family of subsolutions (resp. supersolutions) of \eqref{HJSansu}, then $u$ is a subsolution (resp. supersolution) of \eqref{HJSansu}.
		\item[\rm 2)] If $u$ is a pointwise infimum of a family of equi-Lipschitz subsolutions of \eqref{HJSansu}, then $u$ is also a subsolution of \eqref{HJSansu}.
		\item[\rm 3)] If $u$ is a convex combination of a family of equi-Lipschitz subsolutions of \eqref{HJSansu}, then it is also a subsolution of \eqref{HJSansu}.
	\end{itemize}
\end{prop}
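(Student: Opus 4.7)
The plan is to dispatch part (1) by the classical stability argument for viscosity sub/supersolutions, which does not actually use convexity, and to treat parts (2) and (3) by combining the convexity hypothesis (${\mathfrak H}1$) with the standard equivalent characterizations of Lipschitz subsolutions.

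For (1), suppose $u=\sup_i u_i$ is continuous and each $u_i$ is a subsolution of \eqref{HJSansu}. Given $\varphi\in C^1(M)$ such that $u-\varphi$ has a \textit{strict} local maximum at $x_0$ with value $0$ (the reduction to the strict case is standard), pick a small closed neighborhood $\bar U$ of $x_0$ on which $u-\varphi\leq -\delta$ on $\partial U$ for some $\delta>0$. For $0<\epsilon<\delta$, select an index $j$ with $u_j(x_0)-\varphi(x_0)>-\epsilon$; since $u_j\leq u$, the function $u_j-\varphi$ is at most $-\delta$ on $\partial U$ but exceeds $-\epsilon$ at $x_0$, so it attains a maximum at some interior point $x_\epsilon\in U$. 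The subsolution property of $u_j$ yields ${\mathfrak H}(x_\epsilon, d_{x_\epsilon}\varphi)\leq c$, and shrinking $U$ and letting $\epsilon\to 0$ forces $x_\epsilon\to x_0$, so the continuity of $\mathfrak H$ gives ${\mathfrak H}(x_0, d_{x_0}\varphi)\leq c$. The dual statement about infima of supersolutions follows symmetrically.

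The central tool for parts (2) and (3) is the well-known equivalence, valid under (${\mathfrak H}1$)--(${\mathfrak H}2$), between the viscosity subsolution property and the \emph{domination} property (see \cite{Fathi_book}): a Lipschitz function $w:M\to\R$ is a viscosity subsolution of ${\mathfrak H}(x, d_xw)=c$ if and only if
$$w(\gamma(b))-w(\gamma(a))\leq \int_a^b\bigl[{\mathfrak L}(\gamma(t),\dot\gamma(t))+c\bigr]\,dt$$
for every absolutely continuous curve $\gamma:[a,b]\to M$, which in turn is equivalent to ${\mathfrak H}(x,d_xw(x))\leq c$ at every differentiability point of $w$. Granted this, (3) is immediate: a finite convex combination $u=\sum_k\lambda_k u_k$ of equi-Lipschitz subsolutions retains the common Lipschitz constant, and at any point of joint differentiability (a full-measure set by Rademacher's theorem) one has $d_xu=\sum_k\lambda_k d_xu_k$, so convexity of $\mathfrak H$ in $p$ yields ${\mathfrak H}(x,d_xu)\leq \sum_k\lambda_k{\mathfrak H}(x,d_xu_k)\leq c$. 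For (2), the infimum $u=\inf_i u_i$ inherits the common Lipschitz constant, and for any AC curve $\gamma:[a,b]\to M$ and $\epsilon>0$, choosing $j$ with $u_j(\gamma(a))\leq u(\gamma(a))+\epsilon$ gives
$$u(\gamma(b))-u(\gamma(a))\leq u_j(\gamma(b))-u_j(\gamma(a))+\epsilon\leq \int_a^b\bigl[{\mathfrak L}+c\bigr]\,dt+\epsilon,$$
and letting $\epsilon\to 0$ exhibits $u$ as dominated, hence a subsolution.

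The essential input is thus the domination characterization of Lipschitz subsolutions for convex Hamiltonians; once that is invoked, parts (2) and (3) follow with no real difficulty, and (1) is a purely topological continuity argument. I expect the main subtlety to lie in case (2): the family may be uncountable, and a pointwise infimum generally fails to preserve the subsolution inequality at points where it is not attained by any single $u_i$. The domination viewpoint, together with the $\epsilon$-argument along curves, bypasses this issue cleanly and is precisely where the equi-Lipschitz assumption is used.
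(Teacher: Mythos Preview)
The paper does not actually prove this proposition; it simply states it with the remark ``see for instance \cite{Barles_book}.'' So there is no proof in the paper to compare against, and your write-up is essentially what one would supply if asked to fill in the details.

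Your argument is correct, with two small caveats worth noting. First, the domination characterization you invoke (Theorem~\ref{calibrationThm} in the paper) is stated under both (${\mathfrak H}1$) and (${\mathfrak H}2$), whereas Proposition~\ref{subsol_properties} only assumes (${\mathfrak H}1$); this is harmless in the paper's context since every Hamiltonian appearing there satisfies both, but strictly speaking you are importing an extra hypothesis. Second, for part (3) you only treat \emph{finite} convex combinations via the a.e.\ differentiability characterization, yet the paper later applies this item to an \emph{integral} convex combination (the function $h_{\tilde\mu}$ in the proof of Theorem~\ref{mythm_anotherformula}). The domination viewpoint you already set up handles this uniformly: if each $u_i$ satisfies $u_i(\gamma(b))-u_i(\gamma(a))\leq\int_a^b[{\mathfrak L}+c]$ for every absolutely continuous $\gamma$, then integrating this inequality against the probability measure defining the convex combination (and using Fubini) gives the same bound for $u$, so $u$ is dominated and hence a subsolution. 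It would be cleaner to run part (3) through domination as well, parallel to your treatment of (2).
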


There is a definition of the viscosity solution using only the Lagrangian $\mathfrak L$ as is stated in the following theorem. We refer to \cite{Fathi_Siconolfi2005, Davini_Zavidovique2013,DZ15} for its proof.
\begin{thm}\label{calibrationThm} Suppose that  ${\mathfrak H}:T^*M\to\R$ is a Hamiltonian satisfying conditions 
(${\mathfrak H}1$) and (${\mathfrak H}2$), whose associated Lagrangian is $\mathfrak L:TM\to\R$. 
\begin{itemize}
\item[\rm 1)] A function $u\in C(M)$ is a viscosity subsolution of
${\mathfrak H}(x,d_xu)=c({\mathfrak H})$ if and only if it satisfies: for every absolutely continuous curve $\gamma:[a,b]\to M$, we have
\begin{equation}\label{domination}
u(\gamma(b))-u(\gamma(a))\leq 
\int_a^b\mathfrak L(\gamma(s),\dot\gamma(s))+c(\mathfrak H)\,\dd s.
\end{equation}
\item[\rm 2)]  A function $u\in C(M)$ is a viscosity solution of ${\mathfrak H}(x,d_xu)=c({\mathfrak H})$ if and only if $u$ is a viscosity subsolution and it satisfies: for every $x\in M$, we can find an absolutely continuous curve $\gamma:\,]-\infty,0]\to M$, with $\gamma(0)=x$, and for every $t\geq 0$,
\begin{equation}\label{inftycalibration}
u(\gamma(0))-u(\gamma(-t))=
\int_{-t}^0\mathfrak L(\gamma(s),\dot\gamma(s))+c(\mathfrak H)\,\dd s.
\end{equation}
\end{itemize}
\end{thm}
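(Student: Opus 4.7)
The plan is to treat the subsolution characterization and the solution characterization separately, building the second on the first.

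For Part 1, the forward direction (subsolution $\Rightarrow$ domination) is the classical ``Fenchel inequality on the velocity of a curve'' argument. If $u$ is a viscosity subsolution, Lemma \ref{Lip0} gives that $u$ is Lipschitz. I would first handle the smooth case: if $u \in C^1$, then at every point $\mathfrak{H}(x, d_x u) \leq c(\mathfrak{H})$, hence by Fenchel duality $d_x u(v) - \mathfrak{L}(x, v) \leq c(\mathfrak{H})$ for all $v$; applied to $v = \dot\gamma(s)$ and integrated, this is exactly \eqref{domination}. For the general Lipschitz case, I would mollify $u$ (locally in charts, using a partition of unity) to obtain smooth $u_\eps$ which are $\eps$-subsolutions of $\mathfrak{H}(x, d_x u_\eps) \leq c(\mathfrak{H}) + o_\eps(1)$ thanks to the convexity of $\mathfrak{H}$ in $p$, then pass to the limit $\eps \to 0$. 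For the reverse direction, I would use the standard fact that a Lipschitz function satisfying \eqref{domination} is a.e.\ differentiable, and at any differentiability point $x$, testing with the curve $\gamma(s) = \exp_x(sv)$ yields $d_x u(v) \leq \mathfrak{L}(x, v) + c(\mathfrak{H})$; taking the sup over $v \in T_xM$ gives $\mathfrak{H}(x, d_x u) \leq c(\mathfrak{H})$ a.e., which upgrades to the viscosity inequality everywhere by convexity of $\mathfrak{H}$ in $p$.

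For Part 2, the forward direction is the deeper statement. I would rely on the Lax-Oleinik semigroup
\[
T_t^- u(x) = \inf_{\gamma(0)=x, \gamma\in AC([-t,0],M)} \Bigl\{ u(\gamma(-t)) + \int_{-t}^0 \bigl(\mathfrak{L}(\gamma(s),\dot\gamma(s)) + c(\mathfrak{H})\bigr) \dd s \Bigr\}.
\]
The key intermediate facts are: (i) if $u$ is a viscosity subsolution, then $T_t^- u \leq u$ (this is part 1 applied to curves); (ii) if $u$ is a viscosity solution, then $T_t^- u = u$; and (iii) for each fixed $x$ and $t>0$ the infimum defining $T_t^- u(x)$ is attained at some absolutely continuous minimizer $\gamma_{x,t}$, thanks to Tonelli-type compactness which uses the superlinearity \eqref{SuperLinL} to confine speeds to a fixed bounded set and convexity to pass to the limit of minimizing sequences. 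A minimizer automatically satisfies the calibration identity $u(\gamma_{x,t}(0)) - u(\gamma_{x,t}(s)) = \int_s^0 (\mathfrak{L} + c(\mathfrak{H})) \dd\sigma$ on every $[s,0] \subset [-t,0]$ (otherwise one could replace a subsegment by a better one). I would then take a sequence $t_n \to \infty$, obtain minimizers $\gamma_n$ on $[-t_n, 0]$ with $\gamma_n(0)=x$ which are equi-Lipschitz (again by superlinearity), and extract via a diagonal Ascoli-Arzel\`a argument a limit $\gamma:\,]-\infty,0] \to M$; the calibration identity \eqref{inftycalibration} on every finite $[-t,0]$ follows because it is stable under uniform convergence of equi-Lipschitz curves by continuity of $\mathfrak{L}$ and $u$.

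For the reverse direction in Part 2, I would argue by contradiction. Assuming $u$ is a subsolution and every $x$ admits a backward calibrated curve, suppose $u$ fails to be a supersolution at some $x_0$. Then there exists a $C^1$ function $\phi$ touching $u$ from below at $x_0$ with $\mathfrak{H}(x_0, d_{x_0}\phi) < c(\mathfrak{H})$, hence by the Fenchel inequality $d_{x_0}\phi(v) - \mathfrak{L}(x_0, v) < c(\mathfrak{H})$ for all $v$; extend this strict inequality to a small neighborhood. Evaluating along the calibrated curve $\gamma$ with $\gamma(0) = x_0$ for small $t>0$ and comparing $\phi(\gamma(0))-\phi(\gamma(-t))$ (estimated from above via the strict Fenchel inequality) with $u(\gamma(0))-u(\gamma(-t))$ (given by the calibration equality) leads to a strict inequality that contradicts $\phi \leq u$ with equality at $x_0$.

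The main obstacle I expect is the diagonal extraction in Part 2 and the careful verification that the limiting curve on $]-\infty,0]$ is still calibrated on every finite interval; this rests on a Tonelli-type existence and equi-Lipschitz bound for minimizers of the fixed-endpoint action functional, which in turn hinges on the superlinearity \eqref{SuperLinL} and the boundedness of $u$ on the compact $M$.
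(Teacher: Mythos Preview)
The paper does not give its own proof of this theorem: immediately before the statement it says ``We refer to \cite{Fathi_Siconolfi2005, Davini_Zavidovique2013,DZ15} for its proof.'' So there is no in-paper argument to compare against; the result is quoted from the literature.

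Your outline follows the standard route taken in those references and is essentially correct. One point deserves more care: in Part~2, the forward direction hinges entirely on your item~(ii), namely that a viscosity solution is a fixed point of the Lax--Oleinik semigroup, $T_t^- u = u$. You list this as a ``key intermediate fact'' but give no indication of how you would prove it; this is precisely the nontrivial content of the implication (solution $\Rightarrow$ existence of backward calibrated curves). The usual justifications are either (a) identify $T_t^- u$ as the viscosity solution of the evolutionary problem $\partial_t w + \mathfrak H(x,d_xw)=c(\mathfrak H)$ with initial datum $u$, note that the constant-in-time function $u$ solves the same problem, and invoke the comparison principle for that Cauchy problem; or (b) argue directly that a continuous subsolution which is not a fixed point of $T_t^-$ can be strictly increased somewhere while remaining a subsolution, contradicting the supersolution property. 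Either way, this step carries the real weight and should not be left as an unproved assertion. The rest of your sketch---mollification for Part~1, Tonelli existence of minimizers, equi-Lipschitz bounds from superlinearity, and diagonal extraction to pass from $[-t_n,0]$ to $]-\infty,0]$---is standard and correctly identified.
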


\begin{definition}\rm 
A curve $\gamma:[a,b] \to M$ is called {\em $u$-calibrated} for $\mathfrak L+c$, if
\begin{equation*}
u(\gamma(b))-u(\gamma(a))=\int_{a}^b\mathfrak L(\gamma(s),\dot\gamma(s))+c\,\dd s.
\end{equation*}	
\end{definition}
We remark that if $u$ is a viscosity subsolution, then for any interval $[c,d]\subset [a,b]$, the restriction $\gamma_{|[c,d]}$ is also $u$-calibrated for $\mathfrak L+c$.

The following lemma follows the same spirit as the proof of Lemma \ref{Lip2} below. 

\begin{lemme} \label{calLipsc} Suppose that ${\mathfrak H}:T^*M\to\R$ is a continuous Hamiltonian satisfying conditions (${\mathfrak H}1$) and (${\mathfrak H}2$), whose associated Lagrangian is $\mathfrak L:TM\to\R$. For every $c\in \R$, we can find a constant $K=K(c)>0$ such that: if $u:M\to\R$ is a viscosity subsolution of  ${\mathfrak H}(x,d_xu)=c$ and $\gamma:[a,b]\to M$ is $u$-calibrated for $\mathfrak L+c$, then $\gamma$ is Lipschitz continuous with Lipschitz constant $\leq K=K(c)$.
\end{lemme}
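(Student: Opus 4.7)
My plan is to combine the Lipschitz regularity of viscosity subsolutions (Lemma \ref{Lip0}) with the superlinearity condition $(\mathfrak{H}2)$, equivalent to superlinearity of $\mathfrak{L}$, to get a uniform bound on the average speed of $\gamma$ over every subinterval. From that bound, absolute continuity immediately upgrades to Lipschitz regularity with the same constant. The key observation is that restrictions of calibrated curves are calibrated (as noted in the remark just before the lemma), so the estimate I am going to derive on the whole interval $[a,b]$ actually holds on every sub-interval $[s,t] \subset [a,b]$.

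Concretely, I would proceed in the following steps. First, by Lemma \ref{Lip0}, the subsolution $u$ is $K_c$-Lipschitz with
\[
K_c=\sup\{\lVert p\rVert_x\mid {\mathfrak H}(x,p)\leq c\},
\]
which is finite by $(\mathfrak{H}2)$. Second, by superlinearity of $\mathfrak{L}$ (this is condition $(\mathfrak{L}2)$, which follows from $(\mathfrak{H}2)$ by Fenchel duality), applied with $K'=K_c+1$, there is a finite constant $C=C(K_c+1)$ such that
\[
\mathfrak{L}(x,v)\ \geq\ (K_c+1)\lVert v\rVert_x-C,\qquad \text{for all } (x,v)\in TM.
\]
Third, for any $[s,t]\subset [a,b]$, the restriction $\gamma|_{[s,t]}$ is still $u$-calibrated, so combining the calibration identity with the Lipschitz bound on $u$ along the curve gives
\[
\int_s^t \bigl[\mathfrak{L}(\gamma(\tau),\dot\gamma(\tau))+c\bigr]\,\dd\tau \ =\ u(\gamma(t))-u(\gamma(s))\ \leq\ K_c\int_s^t \lVert\dot\gamma(\tau)\rVert_{\gamma(\tau)}\,\dd\tau.
\]
Inserting the superlinear lower bound on $\mathfrak{L}$ and rearranging yields
\[
\int_s^t \lVert\dot\gamma(\tau)\rVert_{\gamma(\tau)}\,\dd\tau \ \leq\ (C-c)(t-s).
\]
Setting $K(c):=C(K_c+1)-c$, this gives $d(\gamma(s),\gamma(t))\leq \int_s^t \lVert\dot\gamma\rVert\,\dd\tau\leq K(c)(t-s)$, which is exactly the Lipschitz bound we want, with a constant depending only on $c$.

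There is no real obstacle in this proof: the only slightly delicate point is to notice that the calibration property is inherited by restrictions, so that the argument can be run on arbitrary subintervals to produce a pointwise Lipschitz constant (as opposed to a bound on $\int_a^b\lVert\dot\gamma\rVert$ which would only give bounded length). All the ingredients -- Lemma \ref{Lip0}, the superlinearity reformulation \eqref{SuperLinL} on the Lagrangian side, and the definition of $u$-calibration -- have already been recorded in the excerpt, so the proof reduces to a direct computation.
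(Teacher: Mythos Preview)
Your proof is correct and follows essentially the same approach as the paper, which refers to the proof of Lemma~\ref{Lip2}: combine the Lipschitz bound $K_c$ on subsolutions (Lemma~\ref{Lip0}) with the superlinearity of $\mathfrak L$ at level $K_c+1$ on subintervals of a calibrated curve, and rearrange. The only cosmetic difference is that the paper bounds $u(\gamma(d))-u(\gamma(c))$ by $K_c\,\dist(\gamma(c),\gamma(d))$ rather than by $K_c\int_c^d\lVert\dot\gamma\rVert$, but both choices lead to the same constant $K(c)=C(K_c+1)-c$.
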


As a consequence, the calibrated curves in Theorem \ref{calibrationThm} are equi-Lipschitz.

For every $t>0$, we define the action functional $h_t:M\times M\to \R$ as follows:
\begin{equation*}\label{def h_t}
h_t(x,y):=\inf\left\{\int_{-t}^0 \big[ \mathfrak{L}(\gamma(s),\dot\gamma(s))+c(\mathfrak{H})\big]\,\dd s\  \Big |\
\gamma\in \D{AC}([-t,0];\,M),\,\gamma(-t)=x,\,\gamma(0)=y \right\},
\end{equation*}
where $\D{AC}([-t,0];\,M)$ is the family of absolutely continuous curves from $[-t,0]$ to $M$.

\begin{definition}\rm 
The {\em Peierls barrier} is the function $h:M\times M\to\R$ defined by
\begin{equation}\label{def h}
h(x,y):=\liminf_{t\to +\infty} h_t(x,y).
\end{equation}
\end{definition}

It satisfies the following propertie, see \cite{Fathi_Siconolfi2005, Davini_Zavidovique2013}:
\begin{prop}\label{prop h}\ \\
{\rm a)} The Peierls barrier
 $h$ is finite valued and Lipschitz
continuous.\\
\noindent{\rm b)} If $w$ is a viscosity subsolution of
${\mathfrak H}(x,d_xu)=c({\mathfrak H})$, then 
$$w(x)-w(y)\leq h(y,x)\quad\text{ for every $x,y\in M$}.$$
{\rm c)} For every fixed $y\in
    M$, the function $h(y,\cdot)$ is a viscosity solution of
${\mathfrak H}(x,d_xu)=c({\mathfrak H})$; For every fixed $y\in
    M$, the function  $-h(\cdot,y)$ is a viscosity subsolution of
${\mathfrak H}(x,d_xu)=c({\mathfrak H})$.
\end{prop}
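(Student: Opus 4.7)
For the uniform lower bound on $h_t(x,y)$, I would take any continuous viscosity subsolution $w$ of $\mathfrak{H}(x,d_xu)=c(\mathfrak{H})$ (which exists by Theorem~\ref{ExistenceViscHJSansu}) and apply the domination inequality of Theorem~\ref{calibrationThm}(1) to every $\gamma:[-t,0]\to M$ with $\gamma(-t)=x$, $\gamma(0)=y$. Infimizing over such $\gamma$ yields $h_t(x,y)\geq w(y)-w(x)\geq -2\|w\|_\infty$. For the upper bound I would fix a continuous viscosity solution $u$ of the critical equation and use Theorem~\ref{calibrationThm}(2) to obtain, for each $y\in M$, a backward $u$-calibrated curve $\gamma_y:(-\infty,0]\to M$, equi-Lipschitz by Lemma~\ref{calLipsc}. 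Compactness of $M$ lets one select a point on $\gamma_y$ close to any given $x$; concatenating a short bounded-speed Lipschitz arc from $x$ to that point produces a path from $x$ to $y$ in finite time with action bounded independently of $y$, so $h(x,y)<+\infty$. Lipschitz continuity rests on the same cut-and-paste trick: given a near-optimal curve realizing $h_t(x,y)$, replacing a short initial arc so as to start at $x'$ and a short terminal arc so as to end at $y'$ changes the action by at most $C\,[d(x,x')+d(y,y')]$, which upon passing to $\liminf_{t\to\infty}$ gives the Lipschitz estimate.

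\textbf{Inequality (b) and the subsolution parts of (c).} Part (b) is immediate from Theorem~\ref{calibrationThm}(1): for any subsolution $w$ and any $\gamma:[-t,0]\to M$ with $\gamma(-t)=y$, $\gamma(0)=x$,
\[ w(x)-w(y)\leq \int_{-t}^{0}\bigl[\mathfrak{L}(\gamma,\dot\gamma)+c(\mathfrak{H})\bigr]\,ds; \]
taking the infimum over $\gamma$ and then $\liminf_{t\to\infty}$ yields $w(x)-w(y)\leq h(y,x)$. The subsolution property of $h(y,\cdot)$ then follows by concatenation: for any $\alpha:[a,b]\to M$ and any near-minimizer $\beta:[-t,0]\to M$ of $h_t(y,\alpha(a))$, joining $\beta$ and $\alpha$ connects $y$ to $\alpha(b)$ in time $t+(b-a)$, giving
\[ h_{t+(b-a)}(y,\alpha(b))\leq h_t(y,\alpha(a))+\int_a^b\bigl[\mathfrak{L}(\alpha,\dot\alpha)+c(\mathfrak{H})\bigr]\,ds; \]
passing to $\liminf_{t\to\infty}$ produces the domination inequality for $h(y,\cdot)$, hence the subsolution property via Theorem~\ref{calibrationThm}(1). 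The symmetric cut-and-paste, now \emph{pre}pending $\alpha$ to a near-minimizer from $\alpha(b)$ to $y$, gives $h(\alpha(a),y)-h(\alpha(b),y)\leq\int_a^b[\mathfrak{L}(\alpha,\dot\alpha)+c(\mathfrak{H})]\,ds$, i.e., the domination inequality characterizing $-h(\cdot,y)$ as a subsolution.

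\textbf{Supersolution property of $h(y,\cdot)$.} Fix $x\in M$ and pick $t_n\to\infty$ together with curves $\gamma_n:[-t_n,0]\to M$ with $\gamma_n(-t_n)=y$, $\gamma_n(0)=x$ realizing $h_{t_n}(y,x)$ to within $1/n$. Arguments in the spirit of Lemma~\ref{calLipsc}, applied to these near-minimizers, show that the $\gamma_n$ are equi-Lipschitz on every compact time interval; Arzel\`a--Ascoli and a diagonal extraction then yield a limit curve $\gamma:(-\infty,0]\to M$ with $\gamma(0)=x$. The principal difficulty I anticipate is upgrading the subsolution inequality along $\gamma$ into the calibration identity required by Theorem~\ref{calibrationThm}(2), namely
\[ h(y,x)-h(y,\gamma(-t))=\int_{-t}^{0}\bigl[\mathfrak{L}(\gamma,\dot\gamma)+c(\mathfrak{H})\bigr]\,ds \qquad\text{for every }t\geq 0. \]
The direction $\leq$ follows from the subsolution property of $h(y,\cdot)$ just proved; the reverse inequality comes from splitting the action of $\gamma_n$ on $[-t_n,0]$ into the pieces $[-t_n,-t]$ and $[-t,0]$, lower-bounding the first by $h_{t_n-t}(y,\gamma_n(-t))\geq h(y,\gamma(-t))-o(1)$ (via the $\liminf$ definition of $h$ together with continuity of $h$ in its first argument), and applying lower semicontinuity of the action functional on the second piece, while using that the full action tends to $h(y,x)$. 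Combined with Theorem~\ref{calibrationThm}(2), this establishes that $h(y,\cdot)$ is a viscosity solution, completing (c).
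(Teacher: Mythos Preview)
The paper does not prove this proposition; it simply cites \cite{Fathi_Siconolfi2005, Davini_Zavidovique2013}. Your sketch is essentially the standard argument found in those references, so there is nothing to compare at the level of strategy.

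That said, two points in your write-up are imprecise and would need tightening in a complete proof. First, in part~(a), the sentence ``Compactness of $M$ lets one select a point on $\gamma_y$ close to any given $x$'' is not correct as stated: the backward calibrated curve $\gamma_y$ need not approach an arbitrary $x$. What actually works is that for \emph{every} $t>0$ the action of $\gamma_y$ on $[-t,0]$ equals $u(y)-u(\gamma_y(-t))\leq 2\|u\|_\infty$, so $h_t(\gamma_y(-t),y)$ is bounded; one then connects $x$ to $\gamma_y(-t)$ by a unit-time geodesic with action bounded by a constant depending only on $\operatorname{diam}(M)$, yielding $h_{t+1}(x,y)\leq C$ for all $t$. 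Second, in the supersolution argument of~(c), the step $\liminf_n h_{t_n-t}(y,\gamma_n(-t))\geq h(y,\gamma(-t))$ is indeed the delicate point, but it hinges on continuity in the \emph{second} argument (not the first, since $y$ is fixed): concatenating with a geodesic from $\gamma_n(-t)$ to $\gamma(-t)$ of duration $\tau_n=d(\gamma_n(-t),\gamma(-t))\to 0$ gives $h_{t_n-t+\tau_n}(y,\gamma(-t))\leq h_{t_n-t}(y,\gamma_n(-t))+C\tau_n$, and the $\liminf$ of the left-hand side is at least $h(y,\gamma(-t))$. With these two fixes the argument goes through.
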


The \emph{projected Aubry set} $\mathcal{A}_{\mathfrak{L}}$ for the Lagrangian ${\mathfrak L}$ is the closed set defined by
$$  \mathcal{A}_{\mathfrak{L}}:=\{x\in M \mid h(x,x)=0\}. $$
The projected Aubry set is a uniqueness set for the H-J equation. In fact, we have (see \cite{Fathi2012}):

\begin{prop}\label{Aubry_uniqueness}
Let $w_-$ and $w_+$ be a viscosity subsolution and supersolution of the equation $\mathfrak{H}(x, d_xu)=c(\mathfrak{H})$, respectively. If $w_-(x)\leq w_+(x)$ for all $x\in\mathcal{A}_{\mathfrak{L}}$, then $w_-\leq w_+$ on $M$.	
\end{prop}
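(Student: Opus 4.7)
The plan is to establish, for every $x \in M$, the chain of inequalities
$$w_-(x) - w_-(y_0) \leq h(y_0, x) \leq w_+(x) - w_+(y_0)$$
for a well-chosen point $y_0 \in \mathcal{A}_{\mathfrak L}$ depending on $x$. The first inequality is Proposition \ref{prop h}(b) applied to the subsolution $w_-$, and it holds for any $y_0 \in M$; combining the chain with the hypothesis $w_-(y_0) \leq w_+(y_0)$ on $\mathcal{A}_{\mathfrak L}$ then gives $w_-(x) \leq w_+(x)$, the desired conclusion.

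The core of the proof is thus to produce, for each fixed $x \in M$, a point $y_0 \in \mathcal{A}_{\mathfrak L}$ satisfying $w_+(x) - w_+(y_0) \geq h(y_0, x)$. I will use the standard Lax--Oleinik characterization of supersolutions: for every $t > 0$ and $\epsilon > 0$ there is an absolutely continuous curve $\gamma_t^\epsilon:[-t, 0] \to M$ with $\gamma_t^\epsilon(0) = x$ such that
$$w_+(x) \geq w_+(\gamma_t^\epsilon(-t)) + \int_{-t}^0 \mathfrak{L}(\gamma_t^\epsilon(s), \dot\gamma_t^\epsilon(s)) + c(\mathfrak{H}) \, ds - \epsilon.$$
Near-minimality together with the superlinearity $({\mathfrak H}2)$ forces these curves to be equi-Lipschitz, by the same argument proving Lemma \ref{calLipsc}. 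Along sequences $t_n \to \infty$, $\epsilon_n \to 0$, Arzel\`a--Ascoli then yields a limit curve $\gamma_\infty : (-\infty, 0] \to M$, with $\gamma_\infty(0) = x$, that is $w_+$-calibrated for $\mathfrak{L} + c(\mathfrak{H})$ on every compact subinterval of $(-\infty, 0]$.

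The candidate $y_0$ is any point in the $\alpha$-limit set of $\gamma_\infty$. The main obstacle, and the classical weak KAM point, is establishing that this $\alpha$-limit set is contained in $\mathcal{A}_{\mathfrak L}$. Given $y_0 = \lim_k \gamma_\infty(-s_k)$ with $s_k \to +\infty$, the idea is to concatenate the calibrated segment of $\gamma_\infty$ between two successive near-returns to $y_0$ with short geodesic joins from $y_0$ to $\gamma_\infty(-s_k)$ and back; the superlinearity together with the uniform Lipschitz bound lets one control the action of the joining pieces so that the total action of the resulting closed loop at $y_0$ tends to $0$, forcing $h(y_0, y_0) = 0$ and hence $y_0 \in \mathcal{A}_{\mathfrak L}$. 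Once $y_0$ is known to lie in $\mathcal{A}_{\mathfrak L}$, passing to the limit in the calibration identity
$$w_+(x) - w_+(\gamma_\infty(-s_k)) = \int_{-s_k}^0 \mathfrak{L}(\gamma_\infty, \dot\gamma_\infty) + c(\mathfrak{H}) \, ds \geq h_{s_k}(\gamma_\infty(-s_k), x),$$
using continuity of $w_+$, the Lipschitz regularity of $h$ from Proposition \ref{prop h}(a), and a diagonal extraction realizing the $\liminf$ that defines $h(y_0, x)$, yields the required $w_+(x) - w_+(y_0) \geq h(y_0, x)$. The $\epsilon_n$ errors inherited from the approximate supersolution property are absorbed throughout by a standard summable bookkeeping.
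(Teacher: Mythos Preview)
The paper does not give its own proof of this proposition; it simply cites \cite{Fathi2012}. So there is no proof in the paper to compare against, and I will assess your argument on its own merits.

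Your strategy is sound in outline, but there is a genuine gap in the step where you assert that the limit curve $\gamma_\infty$ is ``$w_+$-calibrated for $\mathfrak L+c(\mathfrak H)$ on every compact subinterval''. For a \emph{subsolution}, domination holds along every curve, so calibration on $[a,b]$ automatically restricts to calibration on subintervals. For a \emph{supersolution} this fails: the inequality $w_+(x)\ge w_+(\gamma_{t_n}^{\epsilon_n}(-t_n))+\int_{-t_n}^0\mathfrak L+c-\epsilon_n$ says nothing about $w_+(x)-w_+(\gamma_{t_n}^{\epsilon_n}(-T))$ versus $\int_{-T}^0$ for $0<T<t_n$, because the supersolution property only compares $w_+$ with the \emph{infimum} $T_T^-w_+$, not with the action along a specific curve. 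Consequently, after passing to the limit you get that $\gamma_\infty$ is an action minimizer on each $[-T,0]$, but you do \emph{not} get the identity $w_+(x)-w_+(\gamma_\infty(-T))=\int_{-T}^0\mathfrak L(\gamma_\infty)+c$ that you later invoke. Both your proof that the $\alpha$-limit lies in $\mathcal A_{\mathfrak L}$ and your final inequality $w_+(x)-w_+(y_0)\ge h(y_0,x)$ rest on this unjustified identity.

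The standard repair is to bring $w_-$ into the construction from the start. Work at a point $x_0$ realizing $\max_M(w_--w_+)=:\delta$. For each step, the supersolution property gives $y$ and a time-$1$ minimizer $\sigma$ with $w_+(x_0)\ge w_+(y)+h_1(y,x_0)$; the subsolution property gives $w_-(x_0)\le w_-(y)+h_1(y,x_0)$. Subtracting forces $(w_--w_+)(y)=\delta$ and both inequalities to be equalities. Iterating and concatenating the minimizers yields a curve $\gamma:(-\infty,0]\to M$ that is genuinely $w_-$-calibrated (the subinterval issue disappears because $w_-$ \emph{is} a subsolution), with $(w_--w_+)\equiv\delta$ along it. Now the $\alpha$-limit of a $w_-$-calibrated curve lies in $\mathcal A_{\mathfrak L}$ by the usual weak~KAM argument, and continuity gives $\delta=(w_--w_+)(y_0)$ for some $y_0\in\mathcal A_{\mathfrak L}$, whence $\delta\le 0$.
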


We now proceed to define the Mather (minimizing) measures for the Lagrangian $\mathfrak L$. This notion was first introduced by J. N. Mather in the case of a $C^2$ Tonelli Lagrangian \cite{Mather1991}, where the minimizing measures were obtained by minimizing the integral of the Lagrangian with respect to a family of probability measures invariant with respect to the Euler-Lagrange flow. However, as realized by R. Ma\~n\'e \cite{Mane1996}, this minimizing problem yields the same minimizers if it is done to a larger  family of closed measures. This equivalent definition therefore can be adapted to the current paper's setting since the Euler-Lagrange flow cannot be defined, see also \cite{Fathi_Siconolfi2005}.
  
 \begin{definition}\label{def closed measure} \rm
A Borel probability measure $\tilde\mu$ on $TM$ is called {\em closed}  if it satisfies:
\begin{itemize}
\item[\rm a)] \quad $\displaystyle{\int_{TM} \|v\|_x\,\dd \tilde\mu(x,v)<+\infty}$;
\item[\rm b)] \quad$\displaystyle{\int_{TM} \dd_x\varphi(v)\, \dd \tilde\mu (x,v) =0}$\quad  for every $\varphi\in \CC^1(M)$.
\end{itemize}
\end{definition}
There is a way to obtain closed measures from Lipschitz curves: if $\gamma:[a,b]\to M$ is a Lipschitz curve, we define the probability measure 
$\tilde\mu_\gamma$ on $TM$ by
$$\int_{TM}f(x,v)\,\dd\tilde\mu_\gamma:=\frac1{b-a}\int_a^bf(\gamma(s),\dot\gamma(s))\, \dd s,\quad \textup{for every $f\in C(TM)$.}$$
The support of the measure, denoted by $\supp \tilde\mu_\gamma$, is contained in the closure of the graph of the speed curve of $\gamma$, that is
$$\supp \tilde\mu_\gamma=\overline{\{(\gamma(s),\dot\gamma(s))\mid s\in [a,b],\text{where $\dot\gamma(s)$ exists}\}}.$$
Since $\gamma$ is Lipschitz, if $K$ is a Lipschitz constant for $\gamma$, we get 
$$\supp \tilde\mu_\gamma\subset \{(x,v)\in TM\mid \lVert v\rVert_x\leq K\}.$$
Hence the measure $\tilde\mu_\gamma$ has compact support. It is not difficult to check the following lemma.
\begin{lemme}\label{WeakCompactness} Suppose $\gamma_n:[a_n,b_n]\to M, n\geq 0$ is a sequence of equi-Lipschitz curves with a common Lipschitz constant $K$, then 
$$\supp \tilde\mu_{\gamma_n} \subset \{(x,v)\in TM \mid \lVert v\rVert_x\leq K\},\text{ for all $n\geq 0$.}$$
In particular, the sequence $\tilde\mu_{\gamma_n}$ is relatively compact for the weak topology on the space of probability measures on $TM$. Therefore, if $b_n-a_n\to+\infty$, we can extract a subsequence 
$\tilde\mu_{\gamma_{n_i}}$ which weakly converges to a probability measure $\tilde \mu$ on $TM$.
Moreover, any such limit measure $\tilde \mu$ is a closed measure.
\end{lemme}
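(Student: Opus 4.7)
The plan is to verify the three assertions in turn, using standard tools from measure theory.

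First I would observe that the inclusion $\supp \tilde\mu_{\gamma_n}\subset \{(x,v)\in TM\mid \lVert v\rVert_x\leq K\}$ is immediate from the description of the support already noted in the paper just above the lemma: the support is the closure of $\{(\gamma_n(s),\dot\gamma_n(s))\}$ over points of differentiability, and because each $\gamma_n$ is $K$-Lipschitz, $\lVert\dot\gamma_n(s)\rVert_{\gamma_n(s)}\leq K$ almost everywhere. Since the sublevel set $\{\lVert v\rVert_x\leq K\}$ is closed, the closure stays inside it.

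Second, I would deduce relative compactness from Prokhorov's theorem. The set $\{(x,v)\in TM\mid \lVert v\rVert_x\leq K\}$ is compact (as $M$ is compact and this is a closed, fiberwise-bounded subset of $TM$). All the measures $\tilde\mu_{\gamma_n}$ are probability measures supported in this common compact set, so the family is tight, hence weakly relatively compact. Up to extraction we thus get $\tilde\mu_{\gamma_{n_i}}\weakcv \tilde\mu$ for some Borel probability measure $\tilde\mu$ on $TM$, and by the Portmanteau theorem the limit $\tilde\mu$ is again supported in $\{\lVert v\rVert_x\leq K\}$, giving property (a) of Definition \ref{def closed measure} directly since $\int \lVert v\rVert_x \,\dd\tilde\mu\leq K$.

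Third, I would verify property (b) of Definition \ref{def closed measure} under the additional assumption $b_n-a_n\to+\infty$. The key computation is that for every $\varphi\in C^1(M)$ and every index $n$,
\[
\int_{TM} \dd_x\varphi(v)\,\dd\tilde\mu_{\gamma_n}(x,v)=\frac{1}{b_n-a_n}\int_{a_n}^{b_n}\frac{\dd}{\dd s}\bigl[\varphi\circ\gamma_n(s)\bigr]\,\dd s=\frac{\varphi(\gamma_n(b_n))-\varphi(\gamma_n(a_n))}{b_n-a_n},
\]
which is bounded in modulus by $2\lVert\varphi\rVert_\infty/(b_n-a_n)$ and hence tends to $0$. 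The function $(x,v)\mapsto \dd_x\varphi(v)$ is continuous on $TM$ and bounded on the compact set $\{\lVert v\rVert_x\leq K\}$ by $K\sup_{x\in M}\lVert \dd_x\varphi\rVert_x$, so it is a bounded continuous function when restricted to the common compact support. Passing to the weak limit along the subsequence gives $\int_{TM}\dd_x\varphi(v)\,\dd\tilde\mu=0$, which is property (b).

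There is no real obstacle here; the only small subtlety is to make sure that the test function $(x,v)\mapsto \dd_x\varphi(v)$ may be used in the weak convergence, which is justified by restricting to the common compact support where the function is continuous and bounded (alternatively one multiplies by a cutoff function equal to $1$ on $\{\lVert v\rVert_x\leq K\}$ to get a $C_c(TM)$ test function without changing the integrals).
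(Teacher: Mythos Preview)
Your argument is correct and follows essentially the same route as the paper: the paper's proof simply declares the relative compactness ``obvious'' and then performs exactly your computation $\int d_x\varphi(v)\,d\tilde\mu_{\gamma_{n_i}}=\bigl(\varphi(\gamma_{n_i}(b_{n_i}))-\varphi(\gamma_{n_i}(a_{n_i}))\bigr)/(b_{n_i}-a_{n_i})\to 0$. Your version is more careful in spelling out the Prokhorov/tightness step and in handling the subtlety that $(x,v)\mapsto d_x\varphi(v)$ is not globally bounded on $TM$, points the paper leaves implicit.
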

\begin{proof}
	The relative compactness is obvious. To verify that $\tilde\mu$ is closed, we observe that:  for any $C^1$ function $\Phi: M\to \R$,
\begin{align*}
	\int_{TM} \dd_x\Phi(v)\,\dd\tilde\mu(x,v)
=\lim_{n_i\to+\infty}\int_{TM}\dd_x\Phi(v)\,\dd \tilde\mu_{\gamma_{n_i}}(x,v)
=&\lim_{n_i\to+\infty}\frac1{b_{n_i}-a_{n_i}}\int_{a_{n_i}}^{b_{n_i}} \frac{\dd }{\dd s } \Phi(\gamma(s))\,\dd s\\
=&\lim_{n_i\to+\infty}\frac{\Phi(b_{n_i})-\Phi(\gamma(a_{n_i}))}{b_{n_i}-a_{n_i}} =0,
\end{align*}
since $\Phi$ is bounded on the compact manifold $M$. This proves the desired result.
\end{proof}

\begin{prop}\label{Propertiesclosedmeas}
The following holds:
\begin{equation*}
	\min_{\tilde\mu}\int_{TM}{\mathfrak L}(x,v)\, \dd\tilde\mu(x,v)= -c(\mathfrak H)
\end{equation*}
where $\tilde\mu$ varies in the set of closed probability measures. Hence, 
	$
		\int_{TM}{\mathfrak L}(x,v)+c({\mathfrak H})\, \dd\tilde\mu(x,v) \geq 0,
	$ for any closed measure $\tilde\mu$.
\end{prop}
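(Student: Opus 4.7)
The plan is to prove the two inequalities $\int \mathfrak{L}\,\dd\tilde\mu \geq -c(\mathfrak H)$ (for every closed probability measure) and the existence of a closed measure achieving equality, from which the ``$\geq 0$'' corollary is immediate.

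For the lower bound, the idea is to combine Fenchel's inequality with the definition of closed measure applied to test functions that are approximate $C^1$ critical subsolutions. Given any $\varphi \in C^1(M)$, by Fenchel's inequality one has
\[
\mathfrak{L}(x,v) \geq \dd_x\varphi(v) - \mathfrak{H}(x,\dd_x\varphi), \quad \forall (x,v)\in TM.
\]
Integrating against a closed probability measure $\tilde\mu$ and using property (b) of Definition \ref{def closed measure} to kill the linear term, we obtain
\[
\int_{TM}\mathfrak{L}(x,v)\,\dd\tilde\mu \geq -\int_{M} \mathfrak{H}(x,\dd_x\varphi)\,\dd(\pi_*\tilde\mu) \geq -\sup_{x\in M}\mathfrak{H}(x,\dd_x\varphi).
\]
To pass to $-c(\mathfrak H)$, I would invoke the $C^1$ regularization of critical subsolutions from Fathi--Siconolfi \cite{Fathi_Siconolfi2004}: for every $\eps>0$ there exists $\varphi_\eps \in C^1(M)$ with $\mathfrak{H}(x,\dd_x\varphi_\eps) \leq c(\mathfrak H)+\eps$ on $M$. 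Letting $\eps \to 0$ yields the lower bound. (The integrability of $\|v\|_x$ guaranteed by property (a) ensures that $\dd_x\varphi(v)$ is $\tilde\mu$-integrable and that all manipulations are justified.)

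For achievability, the construction uses calibrated curves. By Theorem \ref{ExistenceViscHJSansu} there exists a viscosity solution $u \in C(M)$ of $\mathfrak{H}(x, \dd_xu) = c(\mathfrak H)$. Pick any $x_0 \in M$. By Theorem \ref{calibrationThm}(2), there is an absolutely continuous curve $\gamma : (-\infty, 0] \to M$ with $\gamma(0) = x_0$ such that for every $t \geq 0$,
\[
u(\gamma(0)) - u(\gamma(-t)) = \int_{-t}^{0} \mathfrak{L}(\gamma(s), \dot\gamma(s)) + c(\mathfrak H)\,\dd s.
\]
By Lemma \ref{calLipsc}, the restrictions $\gamma_n := \gamma|_{[-n,0]}$ are equi-Lipschitz with a common constant $K$. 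The associated probability measures $\tilde\mu_{\gamma_n}$ are supported in the common compact set $\{(x,v)\in TM : \|v\|_x \leq K\}$, so by Lemma \ref{WeakCompactness} (applied to $a_n = -n$, $b_n = 0$) some subsequence $\tilde\mu_{\gamma_{n_i}}$ weakly converges to a closed probability measure $\tilde\mu$. Passing to the limit in the calibration identity, and using that $u$ is bounded on the compact manifold $M$,
\[
\int_{TM}\mathfrak{L}\,\dd\tilde\mu = \lim_{i\to\infty}\frac{1}{n_i}\int_{-n_i}^{0}\mathfrak{L}(\gamma,\dot\gamma)\,\dd s = \lim_{i\to\infty}\frac{u(\gamma(0)) - u(\gamma(-n_i))}{n_i} - c(\mathfrak H) = -c(\mathfrak H),
\]
which gives both the equality and the fact that the infimum is a minimum.

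The main obstacle I anticipate is the first step: justifying the existence of $C^1$ approximate critical subsolutions in the merely continuous Hamiltonian setting (the regularization is nontrivial on a manifold). However, this is precisely what the Fathi--Siconolfi theorem provides, so the step reduces to invoking the cited reference rather than redoing the construction. Once that is in place, the remainder is a clean application of Fenchel duality and the weak compactness lemma.
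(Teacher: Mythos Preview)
Your argument is correct. Note, however, that the paper does not actually supply a proof of this proposition: it is stated in the preliminaries section as a known result from weak KAM theory, with references to \cite{Fathi_Siconolfi2004, Fathi_Siconolfi2005, Fathi2012, DFIZ2016} for the continuous-Hamiltonian setting. Your proof is precisely the standard one found in those references---Fenchel duality against $C^1$ approximate critical subsolutions for the lower bound, and a weak limit of measures carried by a calibrated curve for the attainment---so there is nothing to compare beyond observing that you have reconstructed the expected argument. The only point worth flagging is that in the attainment step, weak convergence suffices to pass to the limit in $\int\mathfrak L\,\dd\tilde\mu_{\gamma_{n_i}}$ because all the measures are supported in the common compact set $\{\|v\|_x\le K\}$, on which $\mathfrak L$ is continuous and bounded; you implicitly use this but it is worth making explicit since $\mathfrak L$ is unbounded on $TM$.
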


\begin{definition}\rm 
A closed probability measure $\tilde\mu$ on $TM$ is called a {\em Mather measure} for the Lagrangian ${\mathfrak L}$ if
$$\int_{TM}{\mathfrak L}(x,v)+c({\mathfrak H})\, \dd\tilde\mu(x,v)=0.$$
The set of all Mather measures of ${\mathfrak L}$ is denoted by $\Mis({\mathfrak L})$. A {\em projected Mather measure} is a probability measure  on $M$ of the form $\mu=\pi\tilde\mu$, where $\tilde\mu\in \Mis({\mathfrak L})$ and $\pi:TM\to M$ is the canonical projection.
\end{definition}

The set ${\mathfrak L}$ is non-empty. The Mather set $\widetilde{\cal M}_{\mathfrak L}\subset TM$ defined as 
$$\widetilde{\cal M}_{\mathfrak L}:=\overline{\bigcup_{\tilde\mu\in \Mis({\mathfrak L})}\supp \tilde\mu}$$
is compact.  This also implies that the set of Mather measures is compact for the weak topology on the space of probability measures on $TM$. We denote  by ${\cal M}_{\mathfrak L}=\pi \circ \widetilde{\cal M}_{\mathfrak L}$ the projected Mather set. It is a classical conclusion in Aubry-Mather theory that $ {\cal M}_{\mathfrak L}\subset {\cal A}_{\mathfrak L}$, and $\cal A_{\mathfrak L}\backslash{\cal M}_{\mathfrak L}\neq \emptyset$ could happen in rather wide occasions.

Finally, we summarize the properties of Mather measures. 
\begin{prop}\label{PropertiesMather} 
Suppose that $\mathfrak L:TM\to \R$ is a Lagrangian satisfying (${\mathfrak L}1$) and (${\mathfrak L}2$). Then,
\begin{itemize}
\item The set $\Mis({\mathfrak L})$ is not empty, i.e. there always exists a closed probability measure $\tilde \mu$
such that $\int_{TM}\mathfrak L(x,v)+c(\mathfrak H)\, \dd\tilde\mu(x,v)=0$.
\item The Mather set $\widetilde{\cal M}_{\mathfrak L}=\overline{\bigcup_{\tilde\mu\in \Mis({\mathfrak L})}\supp \tilde\mu}$
is compact. In addition, ${\cal M}_{\mathfrak L}\subset {\cal A}_{\mathfrak L}$.
\item The set $\Mis({\mathfrak L})$ is compact for the weak topology on the space of probability measures on $TM$.
\end{itemize}
\end{prop}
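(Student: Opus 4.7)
The plan is to establish the three claims successively, relying on the calibration theorem together with the superlinearity of $\mathfrak L$.

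For nonemptiness of $\Mis(\mathfrak L)$, I would start from any global viscosity solution $u$ of $\mathfrak H(x, d_x u) = c(\mathfrak H)$, which exists by Theorem \ref{ExistenceViscHJSansu}. Fixing $x_0 \in M$, Theorem \ref{calibrationThm} yields a $u$-calibrated curve $\gamma : \,]-\infty, 0] \to M$ with $\gamma(0) = x_0$; by Lemma \ref{calLipsc} this $\gamma$ is Lipschitz with a constant $K = K(c(\mathfrak H))$. Setting $\tilde\mu_n$ to be the probability measure on $TM$ associated with the restriction $\gamma|_{[-n,0]}$, each such measure has support in $\{\|v\|_x \leq K\}$, and by Lemma \ref{WeakCompactness} a subsequence converges weakly to a closed probability measure $\tilde\mu$ still supported in $\{\|v\|_x \leq K\}$. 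The calibration identity gives $\int_{TM}[\mathfrak L + c(\mathfrak H)]\,\dd\tilde\mu_n = (u(x_0) - u(\gamma(-n)))/n \to 0$ as $n\to\infty$, since $u$ is bounded on the compact manifold $M$; continuity of $\mathfrak L$ on the common compact support then yields $\int_{TM} [\mathfrak L + c(\mathfrak H)]\,\dd\tilde\mu = 0$, so $\tilde\mu \in \Mis(\mathfrak L)$.

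Next I would show a uniform bound on the supports of Mather measures. Fix a viscosity subsolution $u$, Lipschitz with constant $K_{c(\mathfrak H)}$ by Lemma \ref{Lip0}, and let $\tilde\mu \in \Mis(\mathfrak L)$. By a standard regularization extending condition (b) of closed measures from $C^1$ to Lipschitz test functions, one has $\int_{TM} d_x u(v)\,\dd\tilde\mu = 0$. Fenchel combined with the subsolution property gives $d_x u(v) \leq \mathfrak L(x,v) + c(\mathfrak H)$ almost everywhere; since both sides integrate to zero against $\tilde\mu \in \Mis(\mathfrak L)$, this is a $\tilde\mu$-a.e.\ equality, so $\mathfrak L(x,v) + c(\mathfrak H) = d_x u(v) \leq K_{c(\mathfrak H)}\,\|v\|_x$ for $\tilde\mu$-a.e.\ $(x,v)$. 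Applying superlinearity (${\mathfrak L}2$) with constant $K_{c(\mathfrak H)} + 1$ then yields an $R$ independent of $\tilde\mu$ with $\|v\|_x \leq R$ almost everywhere; continuity of $\|\cdot\|$ extends this bound to $\supp \tilde\mu$. Hence $\widetilde{\cal M}_{\mathfrak L} \subset \{\|v\|_x \leq R\}$ is compact. For the inclusion ${\cal M}_{\mathfrak L} \subset {\cal A}_{\mathfrak L}$, given $x \in {\cal M}_{\mathfrak L}$ one invokes the recurrence analysis for closed measures developed in \cite{Fathi_Siconolfi2005} to construct, for arbitrary $\eta > 0$, long curves visiting any prescribed neighborhood of $x$ twice, with averaged Lagrangian action tending to $-c(\mathfrak H)$; such loops realize $h(x,x) \leq 0$, while the reverse inequality $h(x,x) \geq 0$ is immediate from the definition of $c(\mathfrak H)$ as the critical value.

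Finally, compactness of $\Mis(\mathfrak L)$ follows from the preceding observations: all Mather measures are supported in the common compact set $K_0 = \{\|v\|_x \leq R\}$, so $\Mis(\mathfrak L)$ is weakly relatively compact by Prokhorov. Weak closedness is straightforward, since both $\int_{TM} d_x\varphi(v)\,\dd\tilde\mu = 0$ for $\varphi \in C^1(M)$ and the equation $\int_{TM} [\mathfrak L + c(\mathfrak H)]\,\dd\tilde\mu = 0$ pass to weak limits, using continuity of $\mathfrak L$ on $K_0$ and of each test function. The main obstacle in this plan is the inclusion ${\cal M}_{\mathfrak L} \subset {\cal A}_{\mathfrak L}$: recovering the recurrence underlying Mather's original invariance-based argument from the weaker notion of a closed measure requires the nontrivial analysis of \cite{Fathi_Siconolfi2005} that a complete proof would need to reproduce or invoke in detail.
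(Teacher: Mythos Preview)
The paper does not actually prove this proposition: it is stated in Section~\ref{sectionNon-u} as a summary of known facts from weak KAM theory, with the surrounding text referring the reader to \cite{Mather1991, Mane1996, Fathi_Siconolfi2005, Fathi_book, DFIZ2016} for the arguments in the $C^0$ setting. So there is no ``paper's own proof'' to compare against.

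That said, your sketch is essentially the standard route taken in those references. The construction of a Mather measure from a calibrated curve via Lemma~\ref{WeakCompactness} is exactly how nonemptiness is usually obtained; your uniform bound on $\supp\tilde\mu$ via the Fenchel inequality $d_xu(v)\leq \mathfrak L(x,v)+c(\mathfrak H)$ and superlinearity is the correct mechanism, and the extension of closedness to Lipschitz test functions by mollification is routine. Compactness of $\Mis(\mathfrak L)$ then follows as you say.

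You correctly identify the genuine difficulty: the inclusion ${\cal M}_{\mathfrak L}\subset{\cal A}_{\mathfrak L}$. In the Tonelli case this comes from invariance of Mather measures under the Euler--Lagrange flow and Poincar\'e recurrence, but here there is no flow, and one must instead produce recurrent behaviour directly from the closedness of $\tilde\mu$. Your proposal defers to \cite{Fathi_Siconolfi2005}, which is honest but leaves the argument incomplete; a self-contained proof would need to show, for instance, that through $\tilde\mu$-a.e.\ point there passes a generalized minimizing curve along which one can close up loops with action arbitrarily close to zero. This is the only substantive gap in what you wrote.
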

All the results above can be applied to any one of the Lagrangian $L^r:TM\to\R,r\in\R$ introduced in Section \ref{section_Intr}.

%%%%%%%%%%%%%%%%%%%%%%%%%%%%%%%%%%%%%%%%%%%%%%%%%%%%%
\section{Properties of discounted solutions}\label{section_bounded}
%%%%%%%%%%%%%%%%%%%%%%%%%%%%%%%%%%%%%%%%%%%%%%%%%%%%%
We now consider a Lagrangian  $L:TM\times \R\to\R$ satisfying conditions {\bf(L0)}, {\bf(L1)} and {\bf(L2)}. The associated Hamiltonian $H:T^*M\times \R\to\R$ therefore satisfies conditions {\bf(H0)}, {\bf(H1)} and {\bf(H2)}.

By Theorem \ref{ExistenceViscHJSansu},  we can find a global viscosity solution $\hat u: M \to\R$ of  
\begin{equation}\label{HJAStCr}H^0(x,d_x\hat u)=c(H^0)
\end{equation}
with $H^0(x,p)=H(x,p,0)$. Set $\hat u_+=\hat u-\min_M\hat u\geq 0$. Since $\hat u_+$ is also a global viscosity solution of \eqref{HJAStCr} and
$H(x,p,u)$ is non-decreasing in $u$, it is not difficult to show that $\hat u_+$ is a viscosity supersolution of \eqref{HJlambda} for all $\lambda>0$. In the same way, $\hat u_-=\hat u-\max_M \hat u\leq 0$ is a viscosity subsolution of \eqref{HJlambda} for all $\lambda>0$. 

Since $\hat u_-\leq\hat u_+$, using the Perron method (see for instance \cite{Ishii1987}) we can find a continuous viscosity solution to \eqref{HJlambda} for every $\lambda>0$. More precisely, the function defined by
\begin{equation*}
u_\lambda(x):=\sup\{w(x) \mid \hat u_-\leq w\leq \hat u_+, w \text{~is a continuous viscosity subsolution of~} \eqref{HJlambda} \}.
\end{equation*}	
is indeed a viscosity solution. Therefore, we conclude that:    

\begin{prop}\label{prop_exsitulam}
For every $\lambda >0$, we can find a continuous viscosity solution $u_\lambda:M\to\R$ of the discounted Hamilton--Jacobi equation \eqref{HJlambda} such that $\hat u -\max_M\hat u\leq  u_\lambda\leq \hat u -\min_M\hat u$.
\end{prop}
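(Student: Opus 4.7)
The plan is to run the Perron method between an explicit subsolution and an explicit supersolution of \eqref{HJlambda}, both produced by translating a critical solution of \eqref{HJAStCr} by a constant. The existence of $\hat u$ is guaranteed by Theorem \ref{ExistenceViscHJSansu} applied to $\mathfrak H=H^0$, which satisfies $(\mathfrak H 1)$--$(\mathfrak H 2)$ under conditions \textbf{(H1)}--\textbf{(H2)}.

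First I would verify the two key monotonicity observations. Set $\hat u_+:=\hat u-\min_M\hat u\geq 0$ and $\hat u_-:=\hat u-\max_M\hat u\leq 0$; both functions are viscosity solutions of \eqref{HJAStCr} since that equation is invariant under adding a constant. To see that $\hat u_+$ is a viscosity supersolution of \eqref{HJlambda}, take a $C^1$ test function $\varphi$ touching $\hat u_+$ from below at a point $x\in M$. Since $\hat u_+$ is a supersolution of \eqref{HJAStCr} we have $H^0(x,d_x\varphi)\geq c(H^0)$, and since $\lambda\hat u_+(x)\geq 0$ the monotonicity condition \textbf{(H0)} yields
\[
H\bigl(x,d_x\varphi,\lambda\hat u_+(x)\bigr)\geq H\bigl(x,d_x\varphi,0\bigr)=H^0(x,d_x\varphi)\geq c(H^0),
\]
as required. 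The same argument with inequalities reversed, using $\lambda\hat u_-(x)\leq 0$, shows that $\hat u_-$ is a viscosity subsolution of \eqref{HJlambda}.

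Next I would invoke Perron's method in its standard form adapted to equations with $u$-dependent Hamiltonians (see \cite{Ishii1987}). Define
\[
u_\lambda(x):=\sup\bigl\{w(x)\mid \hat u_-\leq w\leq \hat u_+\text{ and $w$ is a viscosity subsolution of }\eqref{HJlambda}\bigr\}.
\]
This set is non-empty (it contains $\hat u_-$) and pointwise bounded above by $\hat u_+$, so $u_\lambda$ is well-defined and satisfies the announced sandwich $\hat u_-\leq u_\lambda\leq\hat u_+$. Taking upper and lower semicontinuous envelopes and applying the standard bump construction (which works provided $H$ is continuous and monotone in $u$) shows that the upper envelope $u_\lambda^*$ is a subsolution and the lower envelope $(u_\lambda)_*$ is a supersolution of \eqref{HJlambda}. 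Because both envelopes are squeezed between the continuous functions $\hat u_-$ and $\hat u_+$ at every point, the equi-Lipschitz estimate of Lemma \ref{Lip0} applied to $H^0$ combined with the coercivity \textbf{(H2)} forces these envelopes to be Lipschitz; more directly, any viscosity subsolution of \eqref{HJlambda} is pointwise bounded by $|\hat u_\pm|$, hence $H(x,d_x w,\lambda w(x))=c(H^0)$ can be rewritten as the $u$-independent inequality $H(x,d_x w,s)\leq c(H^0)$ for $s=\lambda w(x)$ ranging in a fixed compact set, which by \textbf{(H2)} gives a uniform Lipschitz bound. Hence $u_\lambda^*=(u_\lambda)_*=u_\lambda$ is continuous and is the desired viscosity solution.

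The only step that requires care is the bump construction in Perron's method, since the Hamiltonian depends on $u$: one must check that the family of subsolutions is stable under the standard local perturbation used to promote the supremum to a supersolution. This stability follows from the continuity of $H$ together with \textbf{(H0)}, exactly as in the classical argument of \cite{Ishii1987}, so no new idea is needed beyond organizing these pieces.
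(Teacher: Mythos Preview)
Your proposal is correct and follows essentially the same approach as the paper: translate the critical solution $\hat u$ to obtain the barriers $\hat u_\pm$, use \textbf{(H0)} to check they are super/subsolutions of \eqref{HJlambda}, and apply Perron's method to the supremum over subsolutions sandwiched between them. You have simply fleshed out the test-function verification and the Lipschitz/continuity step (via a uniform bound on $\lambda w$ and Lemma~\ref{Lip0}) that the paper leaves implicit.
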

\begin{rem}
However, equation \eqref{HJlambda} may fail to have the uniqueness property of solutions because by condition {\bf(H0)} the map $u\mapsto H(x,p,u)$ is not necessarily strictly increasing. For more on uniqueness of the solutions, see Section \ref{section_uniqueness}.
\end{rem}

Next, we will interpret a solution of equation \eqref{HJlambda} as a solution of the Hamilton--Jacobi equation of a Hamiltonian to which we can apply the results of Section \ref{sectionNon-u}. Given a continuous function $\psi:M\to\R$, we can define a new Hamiltonian ${\bf H}^\psi:T^*M\to\R$ by
\begin{equation}\label{Def-bfH-u}
{\bf H}^\psi(x,p):=H(x,p, \psi(x)).
\end{equation} 
Since $H$ satisfies conditions {\bf(H1)} and {\bf(H2)}, using the continuity of $u$, it is not difficult to show that ${\bf H}^\psi$ satisfies conditions (${\mathfrak H}1$) and (${\mathfrak H}2$). Accordingly, the Lagrangian ${\bf L}^\psi:TM\to\R$ associated to ${\bf H}^\psi$ is given by 
\begin{equation}\label{Def-bfL-u}
{\bf L}^\psi(x,p)=L(x,v,\psi(x)).
\end{equation}  
Then we can easily verify the following conclusion:
\begin{prop}\label{uLambdaKamFaible} Any viscosity solution
$u_\lambda: M\to\R$ of  equation \eqref{HJlambda} is a viscosity solution
of the Hamilton--Jacobi equation
$${\bf H}^{\lambda u_\lambda}(x,d_xu_\lambda)=c(H^0).$$
As a consequence, the critical value $c({\bf H}^{\lambda u_\lambda})$ of the Hamiltonian ${\bf H}^{\lambda u_\lambda}: T^*M\to \R $ satisfies
\[c({\bf H}^{\lambda u_\lambda})=c(H^0).\]
\end{prop}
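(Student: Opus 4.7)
The plan is to exploit the observation that in the original equation \eqref{HJlambda}, the $u$-argument inside $H$ is a pointwise value of the unknown function and is never differentiated. Consequently, if we freeze this argument to $\lambda u_\lambda(x)$, the resulting $p$-only equation is equivalent to \eqref{HJlambda} in the viscosity sense, and the statement will be essentially a matter of matching definitions.

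First I would verify that the Hamiltonian $\mathbf{H}^{\lambda u_\lambda}(x,p) = H(x,p,\lambda u_\lambda(x))$ is continuous and satisfies $(\mathfrak{H}1)$--$(\mathfrak{H}2)$, so that Section \ref{sectionNon-u} applies. Continuity is immediate from continuity of $H$ and $u_\lambda$; convexity in $p$ is inherited from $(\mathbf{H}1)$; and superlinearity $(\mathfrak{H}2)$ follows from $(\mathbf{H}2)$ with $u_0 := \lambda\|u_\lambda\|_\infty$, which is finite since $M$ is compact and $u_\lambda$ is continuous. The Lagrangian dual of $\mathbf{H}^{\lambda u_\lambda}$ is then exactly $\mathbf{L}^{\lambda u_\lambda}$ from \eqref{Def-bfL-u}.

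Next I would match the viscosity conditions directly. Let $\varphi \in C^1(M)$ touch $u_\lambda$ from above at $x_0$; being a viscosity subsolution of \eqref{HJlambda} means
\[
  H(x_0,d_{x_0}\varphi,\lambda u_\lambda(x_0)) \leq c(H^0),
\]
which is literally $\mathbf{H}^{\lambda u_\lambda}(x_0,d_{x_0}\varphi) \leq c(H^0)$. The supersolution case is handled identically by test functions touching from below. Hence $u_\lambda$ is a viscosity solution of $\mathbf{H}^{\lambda u_\lambda}(x,d_xu) = c(H^0)$. This step carries no real difficulty precisely because the $u$-argument never appears under a derivative.

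Finally, the equality $c(\mathbf{H}^{\lambda u_\lambda}) = c(H^0)$ follows at once from Theorem \ref{ExistenceViscHJSansu}: the critical value of a continuous, convex, superlinear Hamiltonian is the \emph{unique} constant for which the corresponding stationary equation admits a continuous global viscosity solution. Since $u_\lambda$ is such a solution for $\mathbf{H}^{\lambda u_\lambda}$ with right-hand side $c(H^0)$, uniqueness forces $c(\mathbf{H}^{\lambda u_\lambda}) = c(H^0)$. The only point requiring any care in the whole argument is confirming that $u_\lambda$ is bounded so that $\mathbf{H}^{\lambda u_\lambda}$ genuinely satisfies $(\mathfrak{H}2)$, which is free from compactness of $M$ and continuity of $u_\lambda$; there is no serious obstacle to this proposition.
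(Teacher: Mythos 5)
Your proposal is correct and follows the same route the paper intends (the paper leaves this as an easy verification after noting that ${\bf H}^{\psi}$ satisfies $({\mathfrak H}1)$--$({\mathfrak H}2)$): freeze the zeroth-order argument, match the test-function inequalities using $\varphi(x_0)=u_\lambda(x_0)$ at touching points, and invoke the uniqueness of the critical value in Theorem \ref{ExistenceViscHJSansu}. No gaps.
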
 
Since  ${\bf L}^{\lambda u_\lambda}$ is the Lagrangian associated  to the Hamiltonian ${\bf H}^{\lambda u_\lambda}$, the next Proposition follows  from Lemma \ref{Lip0} and Theorem \ref{calibrationThm}.

\begin{prop}\label{calibrationThmLambda} 
Every viscosity solution  $u_\lambda:M\to\R$ of  \eqref{HJlambda} is Lipschitz on $M$. Moreover:
\begin{itemize}
\item[(1)] For every absolutely continuous curve $\gamma:[a,b]\to M$, we have
\begin{equation}\label{dominationLambda}
u_\lambda(\gamma(b))-u_\lambda(\gamma(a))\leq 
\int_a^b\big[  L(\gamma(s),\dot\gamma(s),  \lambda u_\lambda(\gamma(s))+c(H^0) \big]\,\dd s.
\end{equation}
\item[(2)] For every $x\in M$, we can find an absolutely continuous curve $\gamma:\,]-\infty,0]\to M$, with $\gamma(0)=x$, and 
\begin{equation}\label{inftycalibrationLambda}
u_\lambda(\gamma(0))-u_\lambda(\gamma(-t))=
\int_{-t}^0\big[ L(\gamma(s),\dot\gamma(s), \lambda  u_\lambda(\gamma(s))+c(H^0)\big]\,\dd s,
\end{equation}
for every $t\geq 0$.
\end{itemize}
\end{prop}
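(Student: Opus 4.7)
The plan is to reduce the statement to Lemma \ref{Lip0} and Theorem \ref{calibrationThm} via the auxiliary Hamiltonian $\mathbf{H}^{\lambda u_\lambda}$ introduced just before the statement. By Proposition \ref{uLambdaKamFaible}, $u_\lambda$ is a viscosity solution of $\mathbf{H}^{\lambda u_\lambda}(x, d_x u_\lambda) = c(H^0)$, and the Lagrangian conjugate to $\mathbf{H}^{\lambda u_\lambda}$ is $\mathbf{L}^{\lambda u_\lambda}(x,v)=L(x,v,\lambda u_\lambda(x))$. So essentially everything reduces to checking that $\mathbf{H}^{\lambda u_\lambda}$ satisfies hypotheses $(\mathfrak{H}1)$ and $(\mathfrak{H}2)$, after which the three conclusions follow automatically.

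First I would verify the hypotheses. Continuity of $\mathbf{H}^{\lambda u_\lambda}$ follows from joint continuity of $H$ on $T^*M\times\R$ and continuity of $u_\lambda$ on $M$; convexity $(\mathfrak{H}1)$ in $p$ is inherited from {\bf(H1)} at the frozen value $u=\lambda u_\lambda(x)$; and for superlinearity $(\mathfrak{H}2)$, since $u_\lambda$ is bounded on the compact $M$, there exists $u_0>0$ with $\lambda u_\lambda(M)\subset[-u_0,u_0]$, so the uniform superlinearity bound provided by {\bf(H2)} on $T^*M\times[-u_0,u_0]$ yields
\[
K\|p\|_x - \mathbf{H}^{\lambda u_\lambda}(x,p) \leq -C^*(K,u_0),\qquad\text{for all }(x,p)\in T^*M,
\]
so the sup over $T^*M$ is finite for every $K\geq 0$. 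This gives exactly $(\mathfrak{H}2)$.

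Next, the three assertions follow. Lipschitz regularity of $u_\lambda$ is immediate from Lemma \ref{Lip0} applied to $\mathbf{H}^{\lambda u_\lambda}$ at the level $c(H^0)$, with explicit Lipschitz constant $\sup\{\|p\|_x : \mathbf{H}^{\lambda u_\lambda}(x,p)\leq c(H^0)\}$. Part (1) is the domination inequality \eqref{domination} of Theorem \ref{calibrationThm}, applied to $u_\lambda$ as a subsolution, since $\mathbf{L}^{\lambda u_\lambda}(\gamma(s),\dot\gamma(s)) = L(\gamma(s),\dot\gamma(s),\lambda u_\lambda(\gamma(s)))$. Part (2) is the calibration property \eqref{inftycalibration} of Theorem \ref{calibrationThm}(2), again applied to $u_\lambda$ viewed as a solution of $\mathbf{H}^{\lambda u_\lambda}(x,d_x u_\lambda)=c(H^0)$.

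There is no real obstacle; the only point that requires a moment's care is that $u_\lambda$ enters implicitly through the Hamiltonian $\mathbf{H}^{\lambda u_\lambda}$, so one must check that the value of $u_\lambda$ along the test curve $\gamma$ is precisely what appears inside $L$ in the statement, which is tautological from the definition of $\mathbf{L}^{\lambda u_\lambda}$. In short, the proposition is a direct corollary of Proposition \ref{uLambdaKamFaible} combined with the general results of Section \ref{sectionNon-u}, once the frozen-in-$u$ Hamiltonian has been shown to inherit $(\mathfrak{H}1)$ and $(\mathfrak{H}2)$.
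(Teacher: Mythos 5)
Your proof is correct and follows exactly the paper's route: the paper likewise deduces the proposition from Proposition \ref{uLambdaKamFaible} (including the identity $c(\mathbf{H}^{\lambda u_\lambda})=c(H^0)$), the observation that $\mathbf{H}^{\lambda u_\lambda}$ inherits $(\mathfrak{H}1)$ and $(\mathfrak{H}2)$ from {\bf(H1)}--{\bf(H2)} and the continuity of $u_\lambda$, and then Lemma \ref{Lip0} together with Theorem \ref{calibrationThm}. Your verification of $(\mathfrak{H}2)$ via the bound $C^*(K,u_0)$ is precisely the standard detail the paper leaves implicit.
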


As previously a curve $\gamma:[a,b] \to M$ such that
\begin{equation}\label{calibration2}
u_\lambda(\gamma(b))-u_\lambda(\gamma(a))= 
\int_{a}^b\big[L(\gamma(s),\dot\gamma(s),  \lambda  u_\lambda(\gamma(s))+c(H^0)\big]\,\dd s,
\end{equation}
is called {\em $u_\lambda$-calibrated}. In such a case, for any interval $[c,d]\subset [a,b]$, the restriction
$\gamma_{|[c,d]}$ is also $u_\lambda$-calibrated.

Applying Propositions \ref{Propertiesclosedmeas} and \ref{PropertiesMather} to the Lagrangian ${\bf L}^{\lambda u_\lambda}$, we obtain:
\begin{prop}\label{PropertiesMatherLambda}
If $u_\lambda:M\to\R$ is a continuous viscosity solution of \eqref{HJlambda}, then we have:
\begin{itemize}
\item $\int_{TM}\big[  L(x,v,\lambda u_\lambda(x))+c(H^0) \big]\, \dd\tilde\mu \geq 0$, for all closed measures $\tilde\mu$ on $TM$.
\item There exists a closed probability measure $\tilde \nu$
such that $\int_{TM}\big[   L(x,v,\lambda u_\lambda(x))+c(H^0)  \big]\, \dd\tilde\nu=0$.
\end{itemize}
\end{prop}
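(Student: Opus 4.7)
The plan is to reduce the statement to the $u$-independent theory already developed in Section \ref{sectionNon-u}, applied to the Lagrangian $\mathbf{L}^{\lambda u_\lambda}$ defined by $\mathbf{L}^{\lambda u_\lambda}(x,v)=L(x,v,\lambda u_\lambda(x))$. The work is essentially already done: Proposition \ref{uLambdaKamFaible} tells us that $u_\lambda$ is a viscosity solution of $\mathbf{H}^{\lambda u_\lambda}(x,d_xu_\lambda)=c(H^0)$, and crucially that $c(\mathbf{H}^{\lambda u_\lambda})=c(H^0)$. The whole machinery of closed and Mather measures from Section \ref{sectionNon-u} can therefore be transplanted directly onto $\mathbf{L}^{\lambda u_\lambda}$, provided we check its hypotheses.

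First I would verify that $\mathbf{L}^{\lambda u_\lambda}$ satisfies $(\mathfrak L1)$ and $(\mathfrak L2)$. Continuity follows from the joint continuity of $L$ and the continuity of $u_\lambda$ on the compact manifold $M$. Convexity in $v$ is inherited fiberwise from {\bf(L1)} since fixing $x$ fixes $\lambda u_\lambda(x)$. For superlinearity, one uses that $u_\lambda$ is continuous on the compact $M$, hence $\lambda u_\lambda(M)\subset[-u_0,u_0]$ for some $u_0>0$; then applying the bound \eqref{SuperLinL} with this $u_0$ gives
\[
\mathbf{L}^{\lambda u_\lambda}(x,v)=L(x,v,\lambda u_\lambda(x))\geq K\lVert v\rVert_x+C(K,u_0)
\]
uniformly in $(x,v)\in TM$, for every $K\geq 0$. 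Thus $\mathbf{L}^{\lambda u_\lambda}$ satisfies $(\mathfrak L1)$--$(\mathfrak L2)$, and its conjugate Hamiltonian is exactly $\mathbf{H}^{\lambda u_\lambda}$.

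With these verifications in place, I would simply invoke Proposition \ref{Propertiesclosedmeas} applied to $\mathbf{L}^{\lambda u_\lambda}$: for every closed probability measure $\tilde\mu$ on $TM$,
\[
\int_{TM}\mathbf{L}^{\lambda u_\lambda}(x,v)\,\dd\tilde\mu(x,v)\;\geq\;-c(\mathbf{H}^{\lambda u_\lambda})\;=\;-c(H^0),
\]
where the last equality is Proposition \ref{uLambdaKamFaible}. Rearranging gives the first bullet. For the second bullet, Proposition \ref{PropertiesMather} guarantees the existence of a Mather measure $\tilde\nu\in\mathfrak M(\mathbf{L}^{\lambda u_\lambda})$, i.e.\ a closed probability measure achieving equality in the above estimate, which is precisely the required $\tilde\nu$.

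There is no serious obstacle here beyond bookkeeping; the one point that deserves emphasis is that the identity $c(\mathbf{H}^{\lambda u_\lambda})=c(H^0)$ must be available \emph{before} invoking Propositions \ref{Propertiesclosedmeas} and \ref{PropertiesMather}, since otherwise the conclusions would only involve the a priori unknown critical value $c(\mathbf{H}^{\lambda u_\lambda})$. This identity is exactly the content of Proposition \ref{uLambdaKamFaible}, which was itself obtained by interpreting the viscosity solution $u_\lambda$ of the discounted equation as a solution of a non-discounted equation for the $u_\lambda$-frozen Hamiltonian. Once that reinterpretation is accepted, the present proposition is an immediate corollary of the general Aubry--Mather theory recalled in Section \ref{sectionNon-u}.
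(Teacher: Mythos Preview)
Your proposal is correct and follows essentially the same approach as the paper: the paper simply states that the proposition is obtained by ``applying Propositions \ref{Propertiesclosedmeas} and \ref{PropertiesMather} to the Lagrangian ${\bf L}^{\lambda u_\lambda}$,'' which is precisely what you do, with the identification $c({\bf H}^{\lambda u_\lambda})=c(H^0)$ from Proposition \ref{uLambdaKamFaible}. Your write-up merely spells out the verification of $(\mathfrak L1)$--$(\mathfrak L2)$ that the paper had already noted in the paragraph following \eqref{Def-bfH-u}.
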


%%%%%%%%%%%%%%%%%%%%%%%%%%%%%%%%%%%%%%%%%%%%%%%%%%%%%
\section{An important Lemma}\label{section_a_lemma}
%%%%%%%%%%%%%%%%%%%%%%%%%%%%%%%%%%%%%%%%%%%%%%%%%%%%%
This section is devoted to proving Lemma \ref{Gen-equibounded}, a very useful conclusion in the following sections. As previously we consider the Lagrangian $L:TM\times \R\to\R$ satisfying conditions {\bf (L0)}, {\bf(L1)} and {\bf (L2)}, and  the associated Hamiltonian $H:T^*M\times \R\to\R$.

Given two continuous functions $\varphi,\psi:M\to \R$, as done above we define two Hamiltonians
${\bf H}^\varphi,{\bf H}^\psi:T^*M\to\R$  by
$${\bf H}^\varphi(x,p)=H(x,p,\varphi(x))~\text{ and }~{\bf H}^\psi(x,p)=H(x,p,\psi(x)).$$
Their associated Lagrangians ${\bf L}^\varphi,{\bf L}^\psi:TM\to\R$ are given by
$${\bf L}^\varphi(x,v)=L(x,v,\varphi(x))~\text{ and }~{\bf L}^\psi(x,v)=L(x,v,\psi(x)).$$

\begin{lemme}\label{Gen-equibounded} Suppose that $u:M\to\R $ is a subsolution of 
$${\bf H}^\varphi(x,d_xu)=c$$
and $w:M\to\R $ is a solution of 
$${\bf H}^\psi(x,d_xw)=c.$$
Then one of the following two holds:
\begin{enumerate}
\item[(1)] The maximum of $u-w$ can be attained at a point $x_{\max}\in M$ where $\varphi(x_{\max})-\psi(x_{\max})\leq 0$.
\item[(2)] We can find a Lipschitz curve $\gamma : ]-\infty,0]\to M$ which is both $u$-calibrated for 
${\bf L}^\varphi+c$ and $w$-calibrated for ${\bf L}^\psi+c$, such that for all $t\in[0,+\infty[$,
\begin{align*}
u(\gamma(-t))- w(\gamma(-t))=\max(u-w), \qquad \varphi(\gamma(-t))-\psi(\gamma(-t))>0.
\end{align*}
\end{enumerate}
Moreover,  if (1) does not hold, denoting by $K<+\infty$ a Lipschitz constant 
for the Lipschitz curve $\gamma:]-\infty,0]\to M$ obtained in (2),
we can find
a \emph{closed} measure $\tilde \mu$ on $TM$, whose support, $\supp \tilde\mu$, is contained in the compact subset $\{(x,v)\in TM \mid \lVert v\rVert_x\leq K\}$,
such that
$$\int_{TM}\big[ L(x,v, \varphi(x))+c\big]\,\dd\tilde\mu(x,v)=\int_{TM} \big[L(x,v, \psi(x))+c\big]\,\dd\tilde\mu(x,v)=0,$$
and for all $(x,v)\in \supp \tilde\mu$,
$$u(x)-w(x)=\max_M(u-w),\ \  \varphi(x)-\psi(x)>0.$$
\end{lemme}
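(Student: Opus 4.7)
The plan opens by setting $Z := \{x \in M : u(x) - w(x) = \max_M(u-w)\}$, a non-empty closed subset of $M$. If some $x \in Z$ satisfies $\varphi(x) \leq \psi(x)$, conclusion (1) holds, so from now on I assume $\varphi > \psi$ throughout $Z$. Pick any $x_0 \in Z$. Since $w$ is a viscosity solution of $\mathbf{H}^\psi(x,d_xw)=c$, Theorem \ref{ExistenceViscHJSansu} gives $c = c(\mathbf{H}^\psi)$, so Theorem \ref{calibrationThm}(2) applied to $\mathbf{H}^\psi$ furnishes an absolutely continuous curve $\gamma:\,]-\infty,0] \to M$ with $\gamma(0) = x_0$ that is $w$-calibrated for $\mathbf{L}^\psi+c$. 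By Lemma \ref{calLipsc}, $\gamma$ is Lipschitz with a constant $K > 0$ depending only on $c$ and $\mathbf{H}^\psi$. Combining the subsolution domination inequality for $u$ on $[-t,-s]$ with the $w$-calibration equality on the same interval and subtracting yields, for every $t \geq s \geq 0$,
\begin{equation*}
(u-w)(\gamma(-s)) - (u-w)(\gamma(-t)) \leq \int_{-t}^{-s}\bigl[L(\gamma(\tau),\dot\gamma(\tau),\varphi(\gamma(\tau))) - L(\gamma(\tau),\dot\gamma(\tau),\psi(\gamma(\tau)))\bigr]\,\dd\tau. \tag{$\star$}
\end{equation*}
By monotonicity {\bf(L0)}, the integrand is $\leq 0$ wherever $\varphi(\gamma(\tau)) \geq \psi(\gamma(\tau))$.

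The crux is to show that $S := \{t \geq 0 : \gamma(-t) \in Z\}$ equals all of $[0,+\infty[$. The set $S$ is closed because $Z$ is, and contains $0$. I will show it is also (relatively) open in $[0,+\infty[$: if $t_0 \in S$, then $\varphi(\gamma(-t_0)) > \psi(\gamma(-t_0))$ by the standing assumption, so by continuity there exists $\delta > 0$ with $\varphi(\gamma(-\tau)) > \psi(\gamma(-\tau))$ for all $\tau \in [t_0, t_0 + \delta]$; applying $(\star)$ with $s = t_0$ and $t \in [t_0, t_0 + \delta]$ forces the right-hand side to be $\leq 0$, so $(u-w)(\gamma(-t)) \geq (u-w)(\gamma(-t_0)) = \max_M(u-w)$, i.e.\ $t \in S$. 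By connectedness of $[0,+\infty[$, $S = [0,+\infty[$. In particular $\varphi > \psi$ along all of $\gamma$, the integrand in $(\star)$ is globally $\leq 0$ along $\gamma$, and the LHS at $s=0$ is globally $\geq 0$, so every inequality used is an equality. This upgrades $\gamma$ to being $u$-calibrated for $\mathbf{L}^\varphi+c$, establishes $L(\gamma,\dot\gamma,\varphi(\gamma)) = L(\gamma,\dot\gamma,\psi(\gamma))$ a.e.\ along $\gamma$, and proves conclusion (2).

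For the closed measure, I apply Lemma \ref{WeakCompactness} to the equi-Lipschitz restrictions $\gamma_n := \gamma|_{[-n,0]}$: the probability measures $\tilde\mu_{\gamma_n}$ are all supported in $\{(x,v) \in TM : \|v\|_x \leq K\}$, and a subsequence converges weakly to a closed probability measure $\tilde\mu$. Because $\gamma_n([-n,0]) \subset Z$ and $Z$ is closed, each $\tilde\mu_{\gamma_n}$ gives mass $1$ to the closed set $\pi^{-1}(Z) \cap \{\|v\|_x \leq K\}$; weak convergence passes this to $\tilde\mu$, so $\supp\tilde\mu$ lies inside this set, on which $u - w = \max_M(u-w)$ and $\varphi - \psi > 0$ automatically. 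The $u$-calibration identity on $[-n,0]$ reads
\begin{equation*}
\int_{TM}[L(x,v,\varphi(x)) + c]\,\dd\tilde\mu_{\gamma_n} = \frac{u(\gamma(0)) - u(\gamma(-n))}{n},
\end{equation*}
and the analogous identity holds with $\psi$ and $w$; continuity of $u$ and $w$ on the compact $M$ forces both right-hand sides to vanish as $n \to \infty$, and weak convergence against the continuous integrands (bounded on the compact velocity set) yields the two desired integral equalities. The main obstacle is the open-closed dichotomy for $S$: one has to chain the $w$-calibration of $\gamma$, the subsolution domination for $u$, and {\bf(L0)} in precisely that order, and exploit the strict inequality $\varphi > \psi$ on $Z$ (not merely $\varphi \geq \psi$) to propagate max-attainment forward in time along $\gamma$---choosing the calibrated curve coming from $w$ rather than from $u$ is essential, since $u$ is only a subsolution.
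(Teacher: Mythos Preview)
Your proof is correct and follows the paper's strategy: take a $w$-calibrated curve through a maximum point of $u-w$, combine the domination inequality for $u$ with the $w$-calibration identity and {\bf(L0)} to propagate max-attainment backward along the curve, then build the closed measure as a weak limit of the empirical measures $\tilde\mu_{\gamma|_{[-n,0]}}$. The only difference is organizational---you package the propagation as a clopen argument on the set $S=\{t:\gamma(-t)\in Z\}$ after assuming (1) fails on all of $Z$, whereas the paper instead tracks the first time $t_0$ at which $\varphi-\psi$ vanishes along $\gamma$ and treats the finite-$t_0$ case as a separate return to alternative~(1).
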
 
\begin{proof}
Pick $x_0\in M$ such that
$u(x_0)- w(x_0)=\max_M(u- w)$. If $\varphi(x_0)-\psi(x_0)\leq 0$, then alternative (1) of the lemma holds.

Suppose now that $\varphi(x_0)-\psi(x_0)> 0$. Since $w$ is a solution of 
${\bf H}^\psi(x,d_xw)=c$,
 by Theorem \ref{calibrationThm} and Lemma \ref{calLipsc}, we can find a Lipschitz (with Lipschitz constant $K<\infty$) curve $\gamma:\,]-\infty,0]\to M$, with 
$\gamma(0)=x_0$, which is $w$-calibrated for ${\bf L}^\psi$.  In particular,
\begin{equation}\label{inftycalibrationv}
w(x_0)-w(\gamma(-t))=
\int_{-t}^0L(\gamma(s),\dot\gamma(s), \psi(\gamma(s)))+c\,\dd s,\text{ for every $t\geq 0$.}
\end{equation}
Since $\varphi(x_0)-\psi(x_0)> 0$, we can define $t_0\in [0, +\infty]$ by
\begin{equation}\label{deft0}t_0=\sup\{t\geq 0\mid \varphi(\gamma(-s))-\psi(\gamma(-s))>0,\text{ for all $s\in[0,t]$}\}.
\end{equation}
By continuity of $s\mapsto \varphi(\gamma(-s))-\psi(\gamma(-s))$, 
we have $t_0>0$ and $(\varphi-\psi)(\gamma(-t_0))=0$ if $t_0$ is finite.

\underline{\bf Claim:} We have
\begin{equation}\label{Constance1gen}
u(x_0)- w(x_0)=u(\gamma(-t))- w(\gamma(-t)),\text{ for all $t\in[0,t_0[$},
\end{equation}
and $\gamma: [0,t_0[ \to M$ is not only 
 $w$-calibrated for ${\bf L}^\psi$ but also $u$-calibrated for ${\bf L}^\varphi$.
 Moreover
\begin{equation}\label{Constance2gen}
L\bigl(\gamma(-t),\dot\gamma(-t), \varphi(\gamma (-t))\bigr)=L\bigl(\gamma(-t),\dot\gamma(-t), \psi(\gamma (-t))\bigr),
\text{ for almost all $t\in[0,t_0[$}.
\end{equation}

We now prove the Claim.
In fact by condition {\bf(L0)}, since $\varphi(\gamma(-s))-\psi(\gamma(-s))>0$, for all $s\in[0,t_0[$, we obtain
\begin{equation}\label{Momogen}
L\bigl(\gamma(-s),\dot\gamma(-s), \varphi(\gamma (-s))\bigr)\leq L\bigl(\gamma(-s),\dot\gamma(-s), \psi(\gamma (-s))\bigr),
\text{ for almost all $s\in[0,t_0[$}.
\end{equation}
Hence for any $t\in [0,t_0[$, taking inequality \eqref{Momogen} above together with equality \eqref{inftycalibrationv} and Theorem \ref{calibrationThm} applied to the viscosity subsolution of ${\bf H}^\varphi(x,d_xu)=c$, we obtain
\begin{equation}\label{BunchInegen}
\begin{aligned}
 u(x_0)-u(\gamma(-t))&\leq \int_{-t}^0\big[L\bigl(\gamma(s),\dot\gamma(s), \varphi(\gamma (s))\bigr)+c\big]\,\dd s\\
&\leq \int_{-t}^0\big[L\bigl(\gamma(s),\dot\gamma(s), \psi(\gamma (-s))\bigr)+c\big]\,\dd s\\ 
&= w(x_0)-w(\gamma(-t)).
\end{aligned}
\end{equation}
On the other side, recall that  $u(x_0)- w(x_0)=\max_M(u- w)$, which implies 
\[u(x_0)- u(x)\geq w(x_0)- w(x),\  \ \textup{for all $x\in M$}.\] In particular, we obtain
 $ u(x_0)-u(\gamma(-t))\geq w(x_0)-w(\gamma(-t))$, which forces all inequalities
 in \eqref{BunchInegen} to be equality. This thus implies that the curve $\gamma: [0,t_0[ \to M$   
 is also $u$-calibrated for ${\bf L}^\varphi$, and
 \begin{equation*}
 w(x_0)-w(\gamma(-t))=u(x_0)-u(\gamma(-t)), \, \textup{for all $t\in[0,t_0[$},
 \end{equation*}
so it verifies equality \eqref{Constance1gen}.  Moreover, since all inequalities in \eqref{BunchInegen} are indeed equality, it follows that
 $$\int_{-t}^0\big[L\bigl(\gamma(s),\dot\gamma(s), \varphi(\gamma (s))\bigr)+c\big]\,\dd s=
\int_{-t}^0\big[L\bigl(\gamma(s),\dot\gamma(s), \psi(\gamma (-s))\bigr)+c\big]\,\dd s$$
for all $t\in[0,t_0[$, which together with \eqref{Momogen} implies 
$$L\bigl(\gamma(-s),\dot\gamma(-s), \varphi(\gamma (-s))\bigr)=
L\bigl(\gamma(-s),\dot\gamma(-s), \psi(\gamma (-s))\bigr),\text{ for almost every $s\in [0,t[$.}$$
Since $t\in [0,t_0[$ is arbitrary, this yields equality \eqref{Constance2gen} in the Claim, 
which finally finishes establishing the Claim.

$\bullet$ If $t_0$ is finite, then by continuity $\max_M(u- w)=u(x_0)- w(x_0)=u(\gamma(-t_0))- w(\gamma(-t_0))$
and as we observed above $(\varphi-\psi)(\gamma(-t_0))=0$. Again alternative (1) of the lemma holds in that case.

$\bullet$ We will now consider the case $t_0=+\infty$. In this situation, the Claim above and the definition \eqref{deft0} of $t_0$ prove that the curve 
$\gamma: ]-\infty,0]$ satisfies alternative (2) of the lemma.

It remains to construct, the closed measure $\tilde\mu$, when alternative (1) of the lemma does not hold. In fact, when alternative (2) holds, we have 
\begin{equation}\label{positivitygen}(\varphi-\psi)(\gamma(-t))>0,\text{ for all $t\in[0,+\infty[$,}
\end{equation}
and by \eqref{Constance1gen}, we obtain
\begin{equation}\label{Constance1genbis}
u(x_0)- w(x_0)=u(\gamma(-t))- w(\gamma(-t)),\text{ for all $t\in[0,+\infty[$},
\end{equation}
Let $A\subset [0,+\infty[$ be the subset of full measure of $s\in[0,+\infty[$ where \eqref{Constance2gen} holds. Since $\gamma$ is Lipschitz, with Lipschitz constant $K$, the set
$$\widetilde C=\overline{\{(\gamma(-t),\dot\gamma(-t))\mid t\in A\}}\subset\{(x,v)\mid \lVert v\rVert_x \leq K\}$$
is compact. 
%Its image $\pi(\widetilde C)$ is exactly $\overline{\{\gamma(-t)\mid t\in [0,+\infty[\}}\subset M$, where
%$\pi:TM\to M$ is the canonical projection.

By continuity of $u,w,\varphi $ and $\psi$, we infer from \eqref{positivitygen} and \eqref{Constance1genbis} that
$$u(x)-w(x)=\max_M(u-w), \quad \varphi(x)-\psi(x)\geq0,\text{ for all $(x,v)\in \widetilde C$.}$$
As we are assuming alternative (1) false, we must have $ \varphi(x)-\psi(x)>0$ for all $(x,v)\in \widetilde C$. Therefore, 
\begin{equation*}
u(x)-w(x)=\max_M(u-w),\quad  \varphi(x)-\psi(x)>0,\text{ for all $(x,v)\in \widetilde C$.}
\end{equation*}
Since \eqref{Constance2gen} holds for all $s\in A$, from the definition of $\widetilde C$ it follows that
\begin{equation}\label{reduction}
L(x,v, \varphi(x))=L(x,v, \psi(x)),\text{ for all $(x,v)\in \widetilde C$.}
\end{equation}

We are now ready to construct the desired closed measure $\tilde \mu$ which satisfies $\supp \tilde\mu\subset\widetilde C$ and
\begin{equation}\label{lastRequirement}
\int_{TM}\big[ L(x,v, \psi(x))+c\big]\,\dd\tilde\mu(x,v)=\int_{TM}\big[ L(x,v, \varphi(x))+c\big]\,\dd\tilde\mu(x,v)=0.
\end{equation}
The construction is as follows: for every $t>0$, we can define a probability $\tilde \mu_t$ on $TM$ by
$$\int_{TM}f(x,v)\,\dd\tilde \mu_t:=\frac1t\int_{-t}^0f(\gamma(s),\dot\gamma(s))\,\dd s,\quad \textup{for any $f\in C(TM)$}.$$
Since the supports of the $\mu_t$ are all contained in the compact subset $\widetilde C\subset TM$, we can choose a sequence $t_n\to +\infty$ such that  
$\tilde \mu_{t_n}\to \tilde\mu$ in the weak topology. 

Clearly, $\supp \tilde\mu \subset\widetilde C$, and by Lemma \ref{WeakCompactness} this limit $\tilde\mu$ is a closed measure. Moreover, $\tilde \mu$ also satisfies
\begin{align*}
	\int_{TM} \big[L(x,v, \psi(x))+c\big]\,\dd\tilde\mu(x,v)
=&\lim_{n\to+\infty}\frac1{t_n}\int_{-t_n}^0\big[L\bigl(\gamma(s),\dot\gamma(s),\psi(\gamma(s))\bigr)+c\big]\,\dd s\\
=& \lim_{n\to+\infty}\frac{w(x_0)-w(\gamma(-t_n))}{t_n}=0,
\end{align*}
where we have used \eqref{inftycalibrationv} and the fact that $w$ is bounded on the compact manifold $M$. Hence, \eqref{lastRequirement} follows immediately from \eqref{reduction}. This finally finishes the proof. 
\end{proof}

%%%%%%%%%%%%%%%%%%%%%%%%%%%%%%%%%%%%%%%%%%%%%%%%%%%%%
\section{The \texorpdfstring{$u_\lambda$}{}'s are equibounded}\label{section_equibd}
%%%%%%%%%%%%%%%%%%%%%%%%%%%%%%%%%%%%%%%%%%%%%%%%%%%%%

Even though the Hamilton--Jacobi equation \eqref{HJlambda}, $\lambda>0$, may not have the uniqueness property of solutions under conditions {\bf (L0)}--{\bf(L4)}, we can still show that all possible solutions are uniformly bounded from below and above.

\begin{thm}\label{equibounded} Suppose that the Lagrangian $L:TM\times \R\to\R$ satisfies conditions {\bf (L0)}, {\bf(L1)}, {\bf(L2)}, {\bf(L3)} and {\bf(L4)}.
If $\hat u: M \to\R$ is a global viscosity solution of  
$$H^0(x,d_x\hat u)=c(H^0).$$
Then for every global continuous viscosity solution $u_\lambda$ of \eqref{HJlambda}, with $\lambda >0$, we have
$$ \hat u-\max_M\hat u\leq u_\lambda\leq \hat u-\min_M\hat u.$$
In particular, the solutions of \eqref{HJlambda}, for all $\lambda>0$, are equibounded.
\end{thm}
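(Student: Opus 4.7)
The plan is to derive both bounds from a single application of Lemma~\ref{Gen-equibounded} in each direction, using hypothesis \textbf{(L4)} precisely to eliminate the \emph{bad} alternative of that lemma. Since $H^{0}$ does not depend on $u$, the shifted functions
$$\hat u_+ := \hat u-\min_M\hat u\geq 0,\qquad \hat u_- := \hat u-\max_M\hat u\leq 0$$
remain viscosity solutions of the critical equation, i.e.\ of ${\bf H}^{0}(x,d_x\cdot)=c(H^{0})$ in the notation \eqref{Def-bfH-u} with the continuous function identically $0$. On the other hand, by Proposition~\ref{uLambdaKamFaible}, any global viscosity solution $u_\lambda$ of \eqref{HJlambda} is a solution — hence in particular a subsolution — of ${\bf H}^{\lambda u_\lambda}(x,d_xu_\lambda)=c(H^{0})$.

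For the upper bound I would apply Lemma~\ref{Gen-equibounded} with $u:=u_\lambda$, $w:=\hat u_+$, $\varphi:=\lambda u_\lambda$, $\psi:\equiv 0$, and $c:=c(H^{0})$. Under alternative~(1), the maximum of $u_\lambda-\hat u_+$ is attained at some $x_{\max}$ with $\lambda u_\lambda(x_{\max})\leq 0$, and since $\hat u_+\geq 0$ we conclude $\max(u_\lambda-\hat u_+)\leq 0$, i.e.\ $u_\lambda\leq \hat u_+$ on $M$. The crux of the proof is to rule out alternative~(2). That alternative supplies a closed probability measure $\tilde\mu$ on $TM$ satisfying
$$\int_{TM}\bigl[L(x,v,0)+c(H^{0})\bigr]\,\dd\tilde\mu=\int_{TM}\bigl[L(x,v,\lambda u_\lambda(x))+c(H^{0})\bigr]\,\dd\tilde\mu=0$$
with $\lambda u_\lambda(x)>0$ on $\supp\tilde\mu$. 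The first equality identifies $\tilde\mu\in\Mis(L^{0})$. Since $\lambda u_\lambda(x)>0$ on $\supp\tilde\mu$, monotonicity \textbf{(L0)} gives $L(x,v,\lambda u_\lambda(x))\leq L(x,v,0)$ pointwise on $\supp\tilde\mu$, and the equality of the two integrals then forces pointwise equality $\tilde\mu$-a.e. Invoking \textbf{(L0)} a second time, the map $s\mapsto L(x,v,s)$ is necessarily constant on $[0,\lambda u_\lambda(x)]$ for $\tilde\mu$-a.e.\ $(x,v)$; combined with the existence of $\partial L/\partial u(x,v,0)$ granted by \textbf{(L3)}, this forces $\partial L/\partial u(x,v,0)=0$ for $\tilde\mu$-a.e.\ $(x,v)$, whence $\int\partial L/\partial u(\cdot,\cdot,0)\,\dd\tilde\mu=0$, contradicting \textbf{(L4)}.

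For the lower bound I would apply Lemma~\ref{Gen-equibounded} symmetrically with $u:=\hat u_-$, $w:=u_\lambda$, $\varphi:\equiv 0$ and $\psi:=\lambda u_\lambda$, so that in alternative~(2) the support of the resulting Mather measure lies instead in $\{\lambda u_\lambda<0\}$; the sign flip is harmless, since \textbf{(L0)} now forces $L(x,v,\cdot)$ to be constant on $[\lambda u_\lambda(x),0]$ and hence its derivative at $0$ to vanish $\tilde\mu$-a.e., contradicting \textbf{(L4)} in the same way. Equiboundedness follows at once from the resulting two-sided envelope, with a constant depending only on $\hat u$. I do not expect a real obstacle in this write-up: the heavy lifting is already encoded in Lemma~\ref{Gen-equibounded}, and the only new ingredient is the elementary observation that, at a point where $L$ is locally constant in $u$ on one side of $0$, the derivative given by \textbf{(L3)} must vanish, which \textbf{(L4)} forbids in $\tilde\mu$-average.
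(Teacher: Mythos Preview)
Your proposal is correct and follows essentially the same route as the paper: both apply Lemma~\ref{Gen-equibounded} twice (once for each inequality), identify the closed measure in alternative~(2) as a Mather measure of $L^{0}$, and then use \textbf{(L0)} together with \textbf{(L3)} to force $\partial L/\partial u(\cdot,\cdot,0)\equiv 0$ on its support, contradicting \textbf{(L4)}. The only cosmetic difference is that you compare $u_\lambda$ with the shifted functions $\hat u_\pm$ while the paper compares with $\hat u$ itself and extracts the constants afterwards; the arguments are otherwise identical.
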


We split the proof of Theorem \ref{equibounded} in two lemmas, one where we show the upper bound and the other one where we show the lower bound. The key of the proof is to make use of Lemma \ref{Gen-equibounded}.

%In the next two lemmas, we will always suppose that the assumptions of Theorem \ref{equibounded} are satisfied.  

\begin{lemme}\label{Majoration}
 For every global continuous viscosity solution $u_\lambda$
of \eqref{HJlambda}, with $\lambda >0$, the function $u_\lambda- \hat u$ achieves its maximum at a point 
$x_{\max}\in M$ where $u_\lambda\leq 0$. 

Therefore, $u_\lambda\leq \hat u-\min_M\hat u$ for every global continuous viscosity solution $u_\lambda$ of \eqref{HJlambda}, with $\lambda >0$.
\end{lemme}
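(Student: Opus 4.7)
The plan is to apply Lemma \ref{Gen-equibounded} directly, taking $u=u_\lambda$, $w=\hat u$, $\varphi=\lambda u_\lambda$, $\psi\equiv 0$, and $c=c(H^0)$. By Proposition \ref{uLambdaKamFaible}, $u_\lambda$ is a viscosity solution (in particular subsolution) of $\mathbf{H}^{\lambda u_\lambda}(x,d_xu_\lambda)=c(H^0)$, while $\hat u$ is tautologically a solution of $\mathbf{H}^{0}(x,d_x\hat u)=H^0(x,d_x\hat u)=c(H^0)$. Alternative (1) of Lemma \ref{Gen-equibounded} directly produces a maximum point $x_{\max}$ of $u_\lambda-\hat u$ with $(\lambda u_\lambda-0)(x_{\max})\leq 0$, i.e.\ $u_\lambda(x_{\max})\leq 0$, which is exactly the first assertion. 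The bound $u_\lambda\leq\hat u-\min_M\hat u$ then follows from
$u_\lambda(x)-\hat u(x)\leq u_\lambda(x_{\max})-\hat u(x_{\max})\leq -\hat u(x_{\max})\leq -\min_M\hat u$.

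Thus the real content is to rule out alternative (2). Assume for contradiction that it holds: Lemma \ref{Gen-equibounded} then furnishes a closed probability measure $\tilde\mu$ on $TM$, with compact support, such that $\lambda u_\lambda(x)>0$ (hence $u_\lambda(x)>0$) on $\supp\tilde\mu$ and
\begin{equation*}
\int_{TM}\bigl[L(x,v,\lambda u_\lambda(x))+c(H^0)\bigr]\,\dd\tilde\mu(x,v)=\int_{TM}\bigl[L(x,v,0)+c(H^0)\bigr]\,\dd\tilde\mu(x,v)=0.
\end{equation*}
The second equality, combined with the closedness of $\tilde\mu$ and Proposition \ref{Propertiesclosedmeas}, identifies $\tilde\mu$ as a Mather measure for $L^0$, so $\tilde\mu\in\Mis(L^0)$ and condition {\bf(L4)} applies to it.

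Subtracting the two displayed integrals gives $\int_{TM}[L(x,v,\lambda u_\lambda(x))-L(x,v,0)]\,\dd\tilde\mu=0$. By {\bf(L0)} together with $\lambda u_\lambda(x)>0$ on $\supp\tilde\mu$, the integrand is pointwise $\leq 0$, so by continuity it vanishes identically on $\supp\tilde\mu$. Since $u\mapsto L(x,v,u)$ is non-increasing, equality of its values at $0$ and at $\lambda u_\lambda(x)>0$ forces $L(x,v,\cdot)$ to be constant on $[0,\lambda u_\lambda(x)]$, so its right derivative at $0$ vanishes. By {\bf(L3)} the two-sided derivative $\frac{\partial L}{\partial u}(x,v,0)$ exists, and must therefore also equal zero on $\supp\tilde\mu$. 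Integrating against $\tilde\mu$ yields $\int_{TM}\frac{\partial L}{\partial u}(x,v,0)\,\dd\tilde\mu=0$, contradicting {\bf(L4)}.

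The delicate step is this contradiction argument: alternative (2) forces equality between two integrals that are a priori unequal, and the nontrivial content is translating this pointwise equality (via the one-sided monotonicity in {\bf(L0)} and the first-order expansion in {\bf(L3)}) into vanishing of $\partial_u L$ on the support of a Mather measure, which is exactly what {\bf(L4)} forbids. This is the only place where assumptions {\bf(L3)} and {\bf(L4)} enter, and it is the heart of how a purely integrated non-degeneracy can take the role classically played by strict pointwise monotonicity in $u$.
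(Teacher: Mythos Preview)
Your proof is correct and follows essentially the same route as the paper's: apply Lemma \ref{Gen-equibounded} with $\varphi=\lambda u_\lambda$, $\psi=0$, dispose of alternative (1) immediately, and in alternative (2) use the two vanishing integrals to identify $\tilde\mu$ as a Mather measure for $L^0$ and then combine {\bf(L0)} and {\bf(L3)} to force $\partial L/\partial u(x,v,0)=0$ on $\supp\tilde\mu$, contradicting {\bf(L4)}. The only cosmetic difference is phrasing: strictly speaking you should assume alternative (1) fails (which is when Lemma \ref{Gen-equibounded} furnishes the measure) rather than that alternative (2) holds, but this does not affect the argument.
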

\begin{proof} 
The last part of the lemma follows from the first part, since 
\begin{equation*}
	u_\lambda-\hat u \leq \max_M (u_\lambda- \hat u)=u_\lambda(x_{\max})- \hat u(x_{\max})\leq - \hat u(x_{\max})\leq -\min_M \hat u.
\end{equation*}

Let us now prove the first part. Note that the functions $u_\lambda$ and $\hat u$ are respectively viscosity solutions of
${\bf H}^{\lambda u_\lambda}(x,d_xw)=c(H^0)$ and of
${\bf H}^{0}(x, d_x w)=c(H^0)$.
Therefore, we can apply Lemma \ref{Gen-equibounded} to conclude the following dichotomy: 
\begin{enumerate}
\item[(I)]	Either the maximum of $u_\lambda- \hat u$ can be achieved at a point where $\lambda u_\lambda-0$ is $\leq 0$.
\item[(II)] Or we can find a closed measure $\tilde\mu$ with compact support on
$TM$ such that
$$u_\lambda(x)- \hat u(x)=\max_M(u_\lambda- \hat u)~ \text{ and } ~ \lambda u_\lambda(x)>0, \text{ for all 
$(x,v)\in \supp{\tilde \mu}$,}$$
and 
\begin{equation}\label{Min-lambda} \int_{TM}\big[L(x,v, \lambda u_\lambda(x))+c(H^0)\big]\,\dd\tilde\mu=\int_{TM}\big[L(x,v,0)+c(H^0)\big]\,\dd\tilde\mu=0.
\end{equation}
\end{enumerate}

Case {\rm (I)} implies right away the lemma. 

If Case {\rm (II)} happens, to finish the proof of the lemma, it suffices to show that the existence of this measure $\tilde \mu$ leads to a contradiction.

Note that the second equality in \eqref{Min-lambda} implies that $\tilde \mu $ is a Mather  measure for $L^0$. Moreover, since $\lambda u_\lambda(x)>0$ for all $(x,v)\in \supp{\tilde \mu}$, condition {\bf(L0)} implies that
$$L(x,v,0)\leq L(x,v, t)\leq L(x,v, \lambda u_\lambda(x)), \text{ for all $(x,v,t)$ with $(x,v)\in \supp{\tilde \mu}$ and $t\in[0,\lambda u_\lambda(x)]$}.$$
This together with the first equality in \eqref{Min-lambda} implies that
$$L(x,v,0)=L(x,v, t)=L(x,v, \lambda u_\lambda(x)) , \text{ for all $(x,v,t)$ with
$(x,v)\in \supp{\tilde \mu}$ and $t\in[0,\lambda u_\lambda(x)]$}.$$
Since the partial derivative $\partial L/\partial u(x,v,0)$ exists (by condition {\bf(L3)}), using again that $\lambda u_\lambda(x)>0$ for all 
$(x,v)\in \supp{\tilde \mu}$, we conclude that
\begin{equation*}	
\frac{\partial L}{\partial u}(x,v,0)=0,\text{ for all 
$(x,v)\in \supp{\tilde \mu}$},
\end{equation*}
which in turn implies
$$\int_{TM}\frac{\partial L}{\partial u}(x,v,0)\,\dd\tilde\mu=0.$$
This contradicts condition {\bf(L4)}, since we have observed above that $\tilde \mu$ is a Mather measure for $L^0$.
\end{proof}

\begin{lemme}\label{Minoration} 
For every continuous viscosity solution $u_\lambda$
of \eqref{HJlambda}, with $\lambda >0$, the minimum of $ u_\lambda-\hat u$ is achieved at a point
where $u_\lambda$ is $\geq 0$.

Therefore, 
$u_\lambda\geq \hat u-\max_M\hat u$ for every  continuous viscosity solution $u_\lambda$ of \eqref{HJlambda}, with $\lambda >0$.
\end{lemme}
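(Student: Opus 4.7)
The statement is the lower-bound counterpart of Lemma \ref{Majoration}, so my plan is to mirror that proof by swapping the roles of the two solutions. I will apply Lemma \ref{Gen-equibounded} with $u := \hat u$ (which is a solution, hence a subsolution, of ${\bf H}^0(x, d_xu) = c(H^0)$, so $\varphi \equiv 0$) and $w := u_\lambda$ (a solution of ${\bf H}^{\lambda u_\lambda}(x, d_xw) = c(H^0)$, so $\psi := \lambda u_\lambda$). Since $\max_M(\hat u - u_\lambda) = -\min_M(u_\lambda - \hat u)$, a maximizer of $\hat u - u_\lambda$ is exactly a minimizer of $u_\lambda - \hat u$, which is what I want to locate.

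Lemma \ref{Gen-equibounded} then leaves two alternatives. In the favorable case, the maximum of $\hat u - u_\lambda$ is attained at some $x_{\min}\in M$ where $\varphi(x_{\min}) - \psi(x_{\min}) = -\lambda u_\lambda(x_{\min}) \leq 0$, so $u_\lambda(x_{\min}) \geq 0$. This is the first claim of the lemma, and the inequality $u_\lambda \geq \hat u - \max_M \hat u$ follows from
$$u_\lambda(x) - \hat u(x) \;\geq\; u_\lambda(x_{\min}) - \hat u(x_{\min}) \;\geq\; 0 - \max_M \hat u.$$

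The other alternative has to be excluded by an argument parallel to the one in Lemma \ref{Majoration}. In that case one would obtain a closed probability measure $\tilde\mu$ on $TM$ satisfying $\lambda u_\lambda(x) < 0$ on $\supp\tilde\mu$ and
$$\int_{TM}\bigl[L(x,v,0)+c(H^0)\bigr]\,\dd\tilde\mu \;=\; \int_{TM}\bigl[L(x,v,\lambda u_\lambda(x))+c(H^0)\bigr]\,\dd\tilde\mu \;=\; 0.$$
The first equality says $\tilde\mu \in \Mis(L^0)$. To reach the contradiction, I would use condition {\bf(L0)}: since $L(x,v,\cdot)$ is non-increasing and $\lambda u_\lambda(x) \leq t \leq 0$ for $t\in[\lambda u_\lambda(x),0]$, one has $L(x,v,0) \leq L(x,v,t) \leq L(x,v,\lambda u_\lambda(x))$ on $\supp\tilde\mu$. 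Equality of the two displayed integrals then forces $L(x,v,\cdot)$ to be constant on $[\lambda u_\lambda(x),0]$ for every $(x,v)\in\supp\tilde\mu$, and {\bf(L3)} (existence of $\partial L/\partial u(\cdot,\cdot,0)$) yields $\partial L/\partial u(x,v,0)=0$ on $\supp\tilde\mu$, so $\int_{TM}\partial L/\partial u(x,v,0)\,\dd\tilde\mu = 0$, contradicting {\bf(L4)}.

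The one item needing attention after swapping $u_\lambda$ and $\hat u$ is the sign convention: the relevant sign of $\lambda u_\lambda$ on $\supp\tilde\mu$ is now negative rather than positive, so the three-term sandwich coming from {\bf(L0)} is reversed compared to Lemma \ref{Majoration}. Nevertheless, both chains of inequalities collapse by the same mechanism and lead to the same vanishing of $\partial L/\partial u(\cdot,\cdot,0)$ on $\supp\tilde\mu$, and so to the same contradiction with {\bf(L4)}. I do not anticipate any genuinely new obstacle beyond bookkeeping these signs carefully.
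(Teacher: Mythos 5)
Your proof is correct and is exactly what the paper intends: its proof of Lemma \ref{Minoration} simply says to repeat the argument of Lemma \ref{Majoration} with the roles of $\hat u$ and $u_\lambda$ exchanged, which is precisely your application of Lemma \ref{Gen-equibounded} with $\varphi=0$, $\psi=\lambda u_\lambda$, together with the reversed sandwich from {\bf(L0)} and the contradiction with {\bf(L4)}. No gaps beyond the sign bookkeeping you already handled.
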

\begin{proof} The proof is very similar to that of Lemma \ref{Majoration}, exchanging the roles of $\hat u$
and $u_\lambda$.
\end{proof}

%%%%%%%%%%%%%%%%%%%%%%%%%%%%%%%%%%%%%%%%%%%%%%%%%%%%%
\section{ The \texorpdfstring{$u_\lambda$}{}'s and their calibrating curves are equi-Lipschitz}\label{section_equiLip}
%%%%%%%%%%%%%%%%%%%%%%%%%%%%%%%%%%%%%%%%%%%%%%%%%%%%%
Throughout this section, we assume that the Lagrangian satisfies conditions {\bf (L0)}, {\bf(L1)}, {\bf(L2)}, {\bf(L3)} and {\bf(L4)}.

\begin{lemme}\label{Lip1} For every finite $\lambda_0>0$, the family
of viscosity solutions of equation \eqref{HJlambda}, for $0<\lambda\leq \lambda_0$,
is an equi-Lipschitz family of functions on $M$.
\end{lemme}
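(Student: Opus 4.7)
The plan is to leverage the equibounded estimate already established in Theorem \ref{equibounded} together with the reduction of equation \eqref{HJlambda} to a $u$-independent Hamilton--Jacobi equation provided by Proposition \ref{uLambdaKamFaible}, after which a uniform Lipschitz bound follows directly from the coercivity estimate implicit in {\bf(H2)} and from Lemma \ref{Lip0}.

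More concretely, I would proceed as follows. First, invoke Theorem \ref{equibounded} to fix a constant $\mathfrak{C}>0$ such that $\lVert u_\lambda\rVert_\infty\leq \mathfrak{C}$ for every viscosity solution $u_\lambda$ of \eqref{HJlambda} and every $\lambda>0$. Consequently, for $0<\lambda\leq \lambda_0$, the continuous function $\psi_\lambda:=\lambda u_\lambda$ takes values in the fixed compact interval $[-\lambda_0\mathfrak{C},\lambda_0\mathfrak{C}]$. Next, by Proposition \ref{uLambdaKamFaible}, each $u_\lambda$ is a viscosity solution (hence a subsolution) of the $u$-independent Hamilton--Jacobi equation $\mathbf{H}^{\psi_\lambda}(x,d_xu_\lambda)=c(H^0)$, with $\mathbf{H}^{\psi_\lambda}(x,p)=H(x,p,\psi_\lambda(x))$. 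By Lemma \ref{Lip0}, $u_\lambda$ is therefore Lipschitz with constant bounded by
\[
K_\lambda:=\sup\bigl\{\lVert p\rVert_x\,\big|\,H(x,p,\psi_\lambda(x))\leq c(H^0)\bigr\}.
\]
To conclude, I would bound $K_\lambda$ by the $\lambda$-independent quantity
\[
K^*:=\sup\bigl\{\lVert p\rVert_x\,\big|\,(x,p,u)\in T^*M\times[-\lambda_0\mathfrak{C},\lambda_0\mathfrak{C}],\ H(x,p,u)\leq c(H^0)\bigr\},
\]
since the graph of $\psi_\lambda$ is contained in $M\times[-\lambda_0\mathfrak{C},\lambda_0\mathfrak{C}]$. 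Finally, $K^*$ is finite by the superlinearity {\bf(H2)} applied with $u_0=\lambda_0\mathfrak{C}$: fixing any $K_1>0$, the associated constant $C^*(K_1,\lambda_0\mathfrak{C})$ is finite and $H(x,p,u)\geq K_1\lVert p\rVert_x + C^*(K_1,\lambda_0\mathfrak{C})$ on $T^*M\times[-\lambda_0\mathfrak{C},\lambda_0\mathfrak{C}]$, so $H(x,p,u)\leq c(H^0)$ forces $\lVert p\rVert_x\leq (c(H^0)-C^*(K_1,\lambda_0\mathfrak{C}))/K_1<\infty$.

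There is no substantive obstacle here; the real work was done upstream, in establishing the equiboundedness of the $u_\lambda$'s (Theorem \ref{equibounded}), which itself relied on the key Lemma \ref{Gen-equibounded} and the Mather-measure non--degeneracy {\bf(L4)}. The role of the restriction $\lambda\leq \lambda_0$ is exactly to confine $\psi_\lambda=\lambda u_\lambda$ to a compact interval on which the sublevel set $\{H\leq c(H^0)\}$ is compact in the $p$-variable uniformly in $x$; without such a cap one would have to control the behavior of $H$ for arbitrarily large values of $u$, which is not provided by the hypotheses. I would also note in passing that the argument yields the same Lipschitz constant $K^*$ for any continuous viscosity subsolution of \eqref{HJlambda} satisfying $\lVert w\rVert_\infty\leq \mathfrak{C}$, which will be convenient later when dealing with the family of subsolutions appearing in the Perron construction and in the selection principle of Section \ref{section_mainproof}.
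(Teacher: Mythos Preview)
Your proof is correct and follows essentially the same strategy as the paper's: use the equiboundedness from Theorem \ref{equibounded} to confine $\lambda u_\lambda$ to $[-\lambda_0\mathfrak{C},\lambda_0\mathfrak{C}]$, then invoke Lemma \ref{Lip0}. The only cosmetic difference is that the paper uses the monotonicity {\bf(H0)} to compare $H(x,p,\lambda u_\lambda(x))\geq H(x,p,-\lambda_0\mathfrak{C})$, so that every $u_\lambda$ is a subsolution of the single equation $H^{-\lambda_0\mathfrak{C}}(x,d_xu)=c(H^0)$ and Lemma \ref{Lip0} applies once, whereas you apply Lemma \ref{Lip0} to each $\mathbf{H}^{\psi_\lambda}$ and then bound the resulting constants $K_\lambda$ uniformly via {\bf(H2)}; both routes arrive at the same conclusion with the same inputs.
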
 
\begin{proof} From Theorem \ref{equibounded}, we can find a constant ${\mathfrak C}>0$ such that for any continuous viscosity solution $u_\lambda$ of \eqref{HJlambda},
\begin{equation*}
-{\mathfrak C}\leq u_\lambda\leq {\mathfrak C},
\end{equation*}
which implies
\begin{equation}\label{lambda0mathfrrakC}-\lambda_0{\mathfrak C}\leq\lambda u_\lambda\leq \lambda_0{\mathfrak C}, \ \ \text{for all $0<\lambda\leq \lambda_0$.}\end{equation}
From condition {\bf(L0)}, we therefore obtain
$$H(x,p,-\lambda_0{\mathfrak C})\leq H(x,p,\lambda u_\lambda), ~\text{for all $(x,p)\in T^*M$ 
and all $0<\lambda\leq \lambda_0$.}$$
Hence, all viscosity solutions $u_\lambda$ of \eqref{HJlambda}, for $0<\lambda\leq \lambda_0$, are viscosity subsolutions of the following equation
\begin{equation}\label{HJ10}H^{-\lambda_0{\mathfrak C}}(x,d_xu)=H(x,d_xu, -\lambda_0{\mathfrak C})=c(H^0).
\end{equation}
Since $H^{-\lambda_0{\mathfrak C}}(x,p)$ satisfies conditions (${\mathfrak H}1$) and (${\mathfrak H}2$), by Lemma \ref{Lip0} the family of viscosity subsolutions of
\eqref{HJ10} is equi-Lipschitz. This gives the desired result.
\end{proof}

As a consequence, by Theorem \ref{equibounded} together with Proposition \ref{prop_exsitulam} and Lemma \ref{Lip1}, we have thus proved Theorem \ref{mythm0}. We can also derive the following Lipschitz property on $u_\lambda$-calibrated curves, which will be used many times in the subsequent sections. 
\begin{lemme}\label{Lip2} 
For every finite $\lambda_0>0$, we can find a constant 
$K=K(\lambda_0)$ such that: if $u_\lambda$ is a continuous viscosity solution of \eqref{HJlambda} with $0<\lambda\leq\lambda_0$, then every $u_\lambda$-calibrated curve $\gamma:[a,b]\to M$ has Lipschitz constant $\leq K=K(\lambda_0)$.
\end{lemme}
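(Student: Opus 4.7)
The plan is to combine the equi-Lipschitz property of the family $\{u_\lambda\}_{0<\lambda\leq\lambda_0}$ (from Lemma \ref{Lip1}) with the superlinearity condition \textbf{(L2)} applied to the calibration identity, and then extract a pointwise speed bound via the Lebesgue differentiation theorem.

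First I would fix $\lambda_0>0$ and recall from Theorem \ref{equibounded} that there is a constant $\mathfrak C>0$ with $\|u_\lambda\|_\infty\leq \mathfrak C$ for every viscosity solution $u_\lambda$ of \eqref{HJlambda}. In particular $|\lambda u_\lambda(x)|\leq \lambda_0\mathfrak C$ for all $x\in M$ and all $0<\lambda\leq\lambda_0$. Lemma \ref{Lip1} then yields a common Lipschitz constant $\kappa=\kappa(\lambda_0)$ for all such $u_\lambda$. Next, by the superlinearity bound \eqref{SuperLinL} applied with the parameters $(\kappa+1,\lambda_0\mathfrak C)$, we get a finite constant $C'=C(\kappa+1,\lambda_0\mathfrak C)$ such that
\[
L(x,v,u)\geq (\kappa+1)\|v\|_x+C',\qquad\text{for all }(x,v,u)\in TM\times[-\lambda_0\mathfrak C,\lambda_0\mathfrak C].
\]

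Now let $\gamma:[a,b]\to M$ be a $u_\lambda$-calibrated curve (with $0<\lambda\leq\lambda_0$). Since restrictions of calibrated curves remain calibrated, for every subinterval $[s_1,s_2]\subset[a,b]$ we have
\[
u_\lambda(\gamma(s_2))-u_\lambda(\gamma(s_1))=\int_{s_1}^{s_2}\bigl[L(\gamma(s),\dot\gamma(s),\lambda u_\lambda(\gamma(s)))+c(H^0)\bigr]\,\dd s.
\]
Applying the superlinearity bound on the right and the $\kappa$-Lipschitz bound on the left, and setting $K(\lambda_0):=-(C'+c(H^0))$ (which is nonnegative when $C'$ is chosen sufficiently negative, as it will be for $\kappa+1$ large), one obtains
\[
\kappa\int_{s_1}^{s_2}\|\dot\gamma(s)\|_{\gamma(s)}\,\dd s\;\geq\;(\kappa+1)\int_{s_1}^{s_2}\|\dot\gamma(s)\|_{\gamma(s)}\,\dd s-K(\lambda_0)(s_2-s_1),
\]
and hence
\[
\int_{s_1}^{s_2}\|\dot\gamma(s)\|_{\gamma(s)}\,\dd s\;\leq\; K(\lambda_0)\,(s_2-s_1).
\]

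Finally, since this holds for every subinterval $[s_1,s_2]\subset[a,b]$, the Lebesgue differentiation theorem yields $\|\dot\gamma(s)\|_{\gamma(s)}\leq K(\lambda_0)$ for a.e.\ $s\in[a,b]$, which is precisely the claimed Lipschitz bound on $\gamma$, with a constant $K(\lambda_0)$ depending only on $\lambda_0$. There is no serious obstacle in this argument: the only subtlety is to make sure the auxiliary superlinearity constant is chosen so that $K(\lambda_0)\geq 0$, which is automatic once we take the coefficient of $\|v\|_x$ in the superlinearity estimate strictly larger than the common Lipschitz constant $\kappa$.
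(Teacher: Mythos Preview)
Your argument is correct and follows essentially the same route as the paper: combine the equiboundedness of $\lambda u_\lambda$ with the superlinearity estimate \eqref{SuperLinL} at slope $\kappa+1$, and compare against the $\kappa$-Lipschitz bound for $u_\lambda$ on the calibration identity restricted to subintervals. The only cosmetic difference is that the paper bounds the left-hand side by $\kappa\,\dist(\gamma(s_1),\gamma(s_2))$ and concludes the Lipschitz property directly from $\dist(\gamma(s_1),\gamma(s_2))\leq K(\lambda_0)(s_2-s_1)$, whereas you pass through the arc length $\int_{s_1}^{s_2}\|\dot\gamma\|$ and invoke Lebesgue differentiation; both finishes are standard and equivalent.
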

\begin{proof}
By Lemma \ref{Lip1}, we can find common Lipschitz constant $K_0$ for all viscosity solutions
of \eqref{HJlambda}, with $0<\lambda\leq\lambda_0$.

By \eqref{lambda0mathfrrakC}, obtained in the proof of Lemma \ref{Lip1}, we can find a constant
${\mathfrak C}\geq 0$ such that
\begin{equation*}
-\lambda_0{\mathfrak C}\leq\lambda u_\lambda\leq \lambda_0{\mathfrak C}, ~\text{for all $0<\lambda\leq \lambda_0$.}
\end{equation*}
Since the Lagrangian $L$ satisfies condition {\bf(L0)}, for any viscosity solution $u_\lambda$ of  
 \eqref{HJlambda} with $0<\lambda\leq \lambda_0$, we obtain
$$L(x,v, \lambda u_\lambda(x))\geq L(x,v, \lambda_0 {\mathfrak C}), ~\text{ for all $(x,v)\in TM$.}$$
Let $\gamma:[a,b]\to M$ be $u_\lambda$-calibrated. If $[c,d]\subset [a,b]$, then
$\gamma_{|[c,d]}$ is  also $u_\lambda$-calibrated. Therefore
\begin{align*}
K_0 \, \dist(\gamma(c),\gamma(d))&\geq u_\lambda(\gamma(d))-u_\lambda(\gamma(c))\\
&= \int_{c}^d\big[L(\gamma(s),\dot\gamma(s), \lambda u_\lambda(\gamma(s)))+c(H^0)\big]\,\dd s\\
&\geq \int_{c}^d\big[L(\gamma(s),\dot\gamma(s), \lambda_0 {\mathfrak C})+c(H^0)\big]\,\dd s.
\end{align*}
By \eqref{SuperLinL},
$$ L(\gamma(s),\dot\gamma(s), \lambda_0 {\mathfrak C})\geq (K_0 +1)\lVert\dot\gamma(s)\rVert_{\gamma(s)}
+C(K_0+1,\lambda_0 {\mathfrak C}).$$
Therefore, integrating the last inequality between $c$ and $d$ and combining with the previous inequalities, we obtain
\begin{align*}
K_0\, \dist(\gamma(c),\gamma(d))&\geq (K_0 +1)\int_{c}^d\lVert\dot\gamma(s)\rVert_{\gamma(s)}\,\dd s
+C(K_0+1,\lambda_0 {\mathfrak C})(d-c)\\
&\geq  (K_0 +1)\,\dist(\gamma(c),\gamma(d))
+C(K_0+1,\lambda_0 {\mathfrak C})(d-c).
\end{align*}
Hence, 
\[  \frac{\dist(\gamma(c),\gamma(d))}{d-c}\leq -C(K_0+1,\lambda_0 {\mathfrak C}),~\textup{for any $[c,d]\subset [a,b]$,}\] 
so the lemma follows immediately by taking
$K=-C(K_0+1,\lambda_0 {\mathfrak C})$.
\end{proof}

%%%%%%%%%%%%%%%%%%%%%%%%%%%%%%%%%%%%%%%%%%%%%%%%%%%%%
\section{Proof of the convergence result}\label{section_mainproof}
%%%%%%%%%%%%%%%%%%%%%%%%%%%%%%%%%%%%%%%%%%%%%%%%%%%%%
The goal of this section is to show that the family of all  continuous viscosity solutions 
$u_{\lambda}: M\to \R,\lambda >0$, of equation \eqref{HJlambda} converges, as $\lambda\to 0^+$, to a particular solution of the critical equation $H^0(x, d_xu)=c(H^0)$.  We will also establish two characterizations of the limit solution.

The Lagrangian $L:TM\times\R\to\R$ considered in this section will always satisfy conditions {\bf (L0)}, {\bf(L1)}, {\bf(L2)}, {\bf(L3)} and {\bf(L4)}.

\subsection{Proof of Theorem \ref{main}}
We begin with the following lemma.
\begin{lemme}\label{Maj1} 
 Fix a compact subset $S\subset TM$. For every probability measure $\tilde\mu$ on $TM$ with $\supp \tilde\mu\subset S$,
and every continuous function $u:M\to\R$ and any $\lambda\in\R$, we have
\begin{equation}\label{MajMeas}\begin{gathered}\left\lvert \int_{TM}L(x,v,\lambda u(x))\,\dd\tilde\mu(x,v)- \int_{TM}L(x,v,0)\,\dd\tilde\mu(x,v)
-\lambda\int_{TM} u(x)\frac{\partial L}{\partial u}(x,v,0)\,\dd\tilde\mu(x,v)\right\rvert\\
\leq 
\lvert\lambda\rvert \lVert u\rVert_\infty\eta_S(\lvert\lambda\rvert \lVert u\rVert_\infty),
\end{gathered}
\end{equation}
where $\eta_S:[0,+\infty[\to [0,+\infty[$ is given by condition {\bf (L3)}.
\end{lemme}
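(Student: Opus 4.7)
The plan is to apply condition \textbf{(L3)} pointwise on the support of $\tilde\mu$ and then integrate. The estimate in \textbf{(L3)} is exactly the first-order Taylor-type bound we need, so the proof will be essentially a one-line computation once we verify that the error term is uniformly controlled over $\supp\tilde\mu\subset S$.

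More precisely, I would first fix $(x,v)\in S$ and apply condition \textbf{(L3)} with the choice $r=\lambda u(x)$. This yields
\[
\left\lvert L(x,v,\lambda u(x))-L(x,v,0)-\lambda u(x)\frac{\partial L}{\partial u}(x,v,0)\right\rvert
\leq \lvert\lambda u(x)\rvert\,\eta_S\bigl(\lvert\lambda u(x)\rvert\bigr).
\]
Since $\lvert\lambda u(x)\rvert\leq \lvert\lambda\rvert\lVert u\rVert_\infty$ and $\eta_S$ is non-decreasing, the right-hand side is bounded by $\lvert\lambda\rvert\lVert u\rVert_\infty\,\eta_S(\lvert\lambda\rvert\lVert u\rVert_\infty)$, which is independent of $(x,v)$.

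Next, I would integrate this pointwise inequality against the probability measure $\tilde\mu$. Since $\supp\tilde\mu\subset S$, the inequality holds $\tilde\mu$-almost everywhere, and using the triangle inequality for integrals together with $\tilde\mu(TM)=1$, we get
\[
\left\lvert \int_{TM}\!\!L(x,v,\lambda u(x))\,\dd\tilde\mu-\int_{TM}\!\!L(x,v,0)\,\dd\tilde\mu-\lambda\int_{TM}\!\!u(x)\frac{\partial L}{\partial u}(x,v,0)\,\dd\tilde\mu\right\rvert
\leq \lvert\lambda\rvert\lVert u\rVert_\infty\,\eta_S(\lvert\lambda\rvert\lVert u\rVert_\infty),
\]
which is the desired inequality \eqref{MajMeas}. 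A small point to check is that all three integrals are finite, but this is immediate since $L$ and $\partial L/\partial u(\cdot,\cdot,0)$ are continuous on $TM\times\R$ (resp.\ on $TM$) and $\tilde\mu$ is compactly supported in $S$, while $u$ is continuous on the compact manifold $M$.

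I do not anticipate any serious obstacle: the lemma is essentially a direct integration of the first-order estimate in \textbf{(L3)}, and the only mild care needed is to bound $\eta_S(\lvert\lambda u(x)\rvert)$ uniformly by $\eta_S(\lvert\lambda\rvert\lVert u\rVert_\infty)$ using the monotonicity of the modulus of continuity. No dynamical input, no calibration, and no weak KAM machinery are required at this step; the lemma is purely a measure-theoretic consequence of the regularity hypothesis \textbf{(L3)}, and is clearly being set up for later use where $\tilde\mu$ will be specialized (typically to a Mather-type or $u_\lambda$-calibrated measure with uniformly compact support provided by Lemma \ref{Lip2}) and $u$ will be taken to be a solution $u_\lambda$, so that the leading term $\lambda\int u\,\partial_u L\,d\tilde\mu$ governs the asymptotics.
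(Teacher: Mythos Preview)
Your proposal is correct and follows essentially the same approach as the paper: apply the pointwise estimate from condition \textbf{(L3)} with $u$ replaced by $\lambda u(x)$, use the monotonicity of $\eta_S$ to pass from $\eta_S(\lvert\lambda u(x)\rvert)$ to $\eta_S(\lvert\lambda\rvert\lVert u\rVert_\infty)$, and then integrate against the probability measure $\tilde\mu$ supported in $S$. Your additional remark on integrability is a harmless extra check.
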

\begin{proof} Condition {\bf (L3)} provides modulus of continuity $\eta_S$ such that
 $$\left\lvert L(x,v,u)-L(x,v,0)-u\frac{\partial L}{\partial u}(x,v,0)\right\rvert\leq \lvert u\rvert \eta_S(\lvert u\rvert),
 \text{ for all $(x,v,u)\in S\times \R.$}$$
Since  $\eta_S\geq 0$ and $\eta_S$ is non-decreasing, we obtain
$$\left\lvert L(x,v,\lambda u(x))-L(x,v,0)-\lambda u(x)\frac{\partial L}{\partial u}(x,v,0)\right\rvert
\leq \lvert\lambda\rvert \lVert u\rVert_\infty\eta_S(\lvert\lambda\rvert \lVert u\rVert_\infty),
 \text{ for all $(x,v)\in S.$}$$
Since $\tilde\mu$ is a probability measure with $\supp \tilde\mu\subset S$, we can integrate the previous inequality with respect to $\tilde\mu$ to obtain the desired inequality. 
\end{proof}

Due to Theorem \ref{equibounded} and Lemma \ref{Lip1}, any sequence of viscosity solutions $u_{\lambda}$ of equation \eqref{HJlambda}, with $0<\lambda\leq 1$, is equibounded and equi-Lipschitz. Therefore, by the Ascoli-Arzel\`a theorem this sequence is relatively compact in the topology of uniform convergence on $M$. The next result gives some constraints on the possible accumulation points of $\{u_\lambda\}_{\lambda>0}$ in terms of Mather measures of $L^0$. 

\begin{cor} \label{cor_subpositive}
Suppose that the sequence of positive numbers $\lambda_n$ tends to $0$ and that $u_{\lambda_n}:M\to \R$ are solutions of equation {\rm (HJ$_{\lambda_n}$)}, then 
$$\liminf_n \int_{TM} u_{\lambda_n}(x)\frac{\partial L}{\partial u}(x,v,0)\,\dd\tilde\mu(x,v)\geq 0,
\text{ for all $\tilde\mu\in \Mis({L^0})$.}$$
In particular, if $u_{\lambda_n}$ converges uniformly to some function $u:M\to\R$, then
$$ \int_{TM} u(x)\frac{\partial L}{\partial u}(x,v,0)\,\dd\tilde\mu(x,v)\geq 0,
\text{ for all Mather measures $\tilde\mu$ of $L^0$.}$$
\end{cor}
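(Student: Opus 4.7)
\medskip

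The plan is to combine the ``variational'' inequality obeyed by any solution $u_{\lambda_n}$ (from Proposition~\ref{PropertiesMatherLambda}) with the first-order expansion given by Lemma~\ref{Maj1}, tested against the fixed Mather measure $\tilde\mu\in\Mis(L^0)$.

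\medskip

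First, I would recall that, by Proposition \ref{PropertiesMatherLambda} applied to the solution $u_{\lambda_n}$, for every closed probability measure $\tilde\nu$ on $TM$ one has
\[
\int_{TM}\bigl[L(x,v,\lambda_n u_{\lambda_n}(x))+c(H^0)\bigr]\,\dd\tilde\nu(x,v)\geq 0.
\]
Apply this with $\tilde\nu=\tilde\mu$, where $\tilde\mu\in\Mis(L^0)$ is the fixed Mather measure. Since $\tilde\mu$ is Mather for $L^0$ we also have
\[
\int_{TM}\bigl[L(x,v,0)+c(H^0)\bigr]\,\dd\tilde\mu(x,v)=0.
\]
Subtracting yields
\[
\int_{TM}\bigl[L(x,v,\lambda_n u_{\lambda_n}(x))-L(x,v,0)\bigr]\,\dd\tilde\mu(x,v)\geq 0.
\]

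\medskip

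Second, I would invoke Lemma \ref{Maj1}. By Proposition \ref{PropertiesMather} the support of $\tilde\mu$ is contained in the compact Mather set $\widetilde{\cal M}_{L^0}\subset TM$; call this compact set $S$. By Theorem \ref{equibounded} the family $\{u_{\lambda_n}\}$ is equibounded by a constant $\mathfrak{C}$ independent of $n$. Lemma \ref{Maj1} therefore gives
\[
\Bigl|\int_{TM}\!\!\bigl[L(x,v,\lambda_n u_{\lambda_n}(x))-L(x,v,0)\bigr]\dd\tilde\mu-\lambda_n\!\!\int_{TM}\!\! u_{\lambda_n}(x)\frac{\partial L}{\partial u}(x,v,0)\dd\tilde\mu\Bigr|\leq \lambda_n\mathfrak{C}\,\eta_S(\lambda_n\mathfrak{C}).
\]
Combining with the previous inequality and dividing by $\lambda_n>0$,
\[
\int_{TM}u_{\lambda_n}(x)\frac{\partial L}{\partial u}(x,v,0)\,\dd\tilde\mu(x,v)\geq -\mathfrak{C}\,\eta_S(\lambda_n\mathfrak{C}).
\]
Since $\eta_S$ is a modulus of continuity with $\eta_S(0^+)=0$ and $\lambda_n\to 0$, taking $\liminf_n$ gives the first statement.

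\medskip

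For the second statement, if $u_{\lambda_n}\ucv u$ uniformly on $M$, then $u_{\lambda_n}\,\frac{\partial L}{\partial u}(\cdot,\cdot,0)$ converges uniformly on the compact set $S$ to $u\,\frac{\partial L}{\partial u}(\cdot,\cdot,0)$, hence the integrals against the fixed measure $\tilde\mu$ (supported in $S$) converge and the inequality passes to the limit. The only point requiring care is the applicability of Lemma~\ref{Maj1}, which is ensured by the compactness of $\supp\tilde\mu$ for $\tilde\mu\in\Mis(L^0)$ and by the uniform bound $\|u_{\lambda_n}\|_\infty\leq\mathfrak{C}$; there is no genuine obstacle beyond assembling these ingredients in the right order.
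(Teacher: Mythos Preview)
Your proof is correct and follows essentially the same approach as the paper: apply Proposition~\ref{PropertiesMatherLambda} to the fixed Mather measure $\tilde\mu$, subtract the minimizing identity $\int_{TM}[L^0+c(H^0)]\,\dd\tilde\mu=0$, use the first-order expansion from Lemma~\ref{Maj1} with $S=\widetilde{\cal M}_{L^0}$, divide by $\lambda_n$, and use equiboundedness of $u_{\lambda_n}$ to pass to the $\liminf$. The only cosmetic difference is that you bound $\|u_{\lambda_n}\|_\infty$ by $\mathfrak{C}$ from the outset (which is justified since $\eta_S$ is non-decreasing), whereas the paper carries $\|u_{\lambda_n}\|_\infty$ through and uses equiboundedness implicitly at the end.
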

\begin{proof}
Let $\eta_S$ be the modulus given by condition {\bf(L3)} with $S= \widetilde{\cal M}_{L^0}=\overline{\bigcup_{\tilde\mu\in \Mis({L^0})}\supp \tilde\mu}$ the Mather set of $L^0$. By Lemma \ref{Maj1}, for every Mather measure $\tilde\mu\in\Mis(L^0)$, we have
\begin{equation}\label{intLlamn}
\begin{aligned}
\int_{TM}L(x,v,\lambda_n u_{\lambda_n}(x))\,\dd\tilde\mu(x,v)\leq 
 \int_{TM}L(x,v,0)\,\dd\tilde\mu(x,v)
+ &\lambda_n  \int_{TM} u_{\lambda_n}(x)\frac{\partial L}{\partial u}(x,v,0)\,\dd\tilde\mu(x,v)\\
&+ \lambda_n \lVert u_{\lambda_n}\rVert_\infty\eta_S(\lambda_n \lVert u_{\lambda_n}\rVert_\infty).
\end{aligned}
\end{equation}
Since $\tilde\mu$ is a closed measure, by Proposition \ref{PropertiesMatherLambda} we have 
\[\int_{TM}\big[ L(x,v,\lambda_n u_{\lambda_n}(x))+c(H^0)\big] \,\dd\tilde\mu(x,v)
\geq 0.\]
Moreover,  $\int_{TM}\big[ L(x,v,0)+c(H^0)\big] \,\dd\tilde\mu(x,v)=0$  since $\tilde\mu$ is a Mather  measure for $L^0$.
Hence, \eqref{intLlamn} implies
$$0\leq\lambda_n\int_{TM} u_{\lambda_n}(x)\frac{\partial L}{\partial u}(x,v,0)\,\dd\tilde\mu(x,v)
+
\lambda_n \lVert u_{\lambda_n}\rVert_\infty\eta_S(\lambda_n \lVert u_{\lambda_n}\rVert_\infty).$$
Dividing this last inequality by $\lambda_n>0$ and letting $\lambda_n\to 0$, we get
$$\liminf_n \int_{TM} u_{\lambda_n}(x)\frac{\partial L}{\partial u}(x,v,0)\,\dd\tilde\mu(x,v)\geq 0.\qed$$
\def\qed{}
\end{proof}
For our purpose, we define a continuous function $u_0:M\to \R$ by
\begin{equation}\label{def_u0}
	u_0(x):=\sup_{w\in \mathcal{S}_0} w(x)
\end{equation}
where $\mathcal{S}_0$ denotes the set of viscosity subsolutions $w$ of \eqref{HJ000} such that 
\begin{equation}\label{strcodn}
\int_{TM} w(x)\frac{\partial L}{\partial u}(x,v,0)\,\dd\tilde \mu(x,v)\geq 0, \quad\textup{for all Mather measures $\tilde\mu\in\Mis(L^0)$.}
\end{equation}
By Proposition \ref{subsol_properties}, $u_0$ is also a viscosity  subsolution of equation \eqref{HJ000}. We will see later that $u_0$ is indeed a viscosity solution. 

In the remainder of this section, we aim to show that $u_0$ is the only accumulation point of $u_{\lambda}$ as $\lambda\to 0^+$. By the stability of the notion of viscosity solution, every possible accumulation point $v_0$ of $u_{\lambda}$ is a viscosity solution of \eqref{HJ000}, then Corollary \ref{cor_subpositive} implies that $v_0\leq u_0$. Thus, it remains to show that very possible accumulation point of $u_{\lambda}$ as $\lambda\to 0^+$ is $\geq u_0$.

%The next lemma is the only serious new fact added the previous \cite{QC} and \cite{MZ}.

We will need the following lemma.
\begin{lemme}\label{essentiel} For any given constants $c'$ and $c$ satisfying $c'<c$ and
\[c>\sup_{\tilde\mu\in \Mis({L^0})}\int_{TM}\frac{\partial L}{\partial u}(x,v,0)\,\dd\tilde\mu\geq \inf_{\tilde\mu\in \Mis({L^0})}\int_{TM}\frac{\partial L}{\partial u}(x,v,0)\,\dd\tilde\mu>c'\]
we can find $\lambda_0=\lambda_0(c,c')>0$ and $T_0=T_0(c,c')>0$ such that any curve $\gamma:[a,b]\to M$ with $b-a\geq T_0$, which is $u_\lambda$-calibrated for a viscosity solution $u_\lambda$ of \eqref{HJlambda} with $0<\lambda\leq\lambda_0$, satisfies
\begin{equation}\label{twoccineq}
c' <\frac{1}{b-a}\int_{a}^{b} \frac{\partial L}{\partial u}(\gamma(s),\dot\gamma(s),0)\,\dd s<c.
\end{equation}
\end{lemme}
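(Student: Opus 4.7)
The natural approach is a contradiction/compactness argument: suppose the lemma fails, extract a limit measure from the offending calibrated curves, show that this limit is a Mather measure for $L^0$, and derive a contradiction with the strict separation of $c$ and $c'$ from the Mather averages of $\partial L/\partial u(\cdot,\cdot,0)$.

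First I would negate the conclusion to produce, say on the upper side, sequences $\lambda_n \downarrow 0$, viscosity solutions $u_{\lambda_n}$ of \eqref{HJlambda}, and $u_{\lambda_n}$-calibrated curves $\gamma_n:[a_n,b_n]\to M$ with $b_n-a_n\to+\infty$ such that
\[
\frac{1}{b_n-a_n}\int_{a_n}^{b_n}\frac{\partial L}{\partial u}(\gamma_n(s),\dot\gamma_n(s),0)\,\dd s\;\geq\; c
\]
(the symmetric case with $c'$ is handled identically). By Lemma \ref{Lip2} the $\gamma_n$ are equi-Lipschitz with constant $K=K(1)$, so the associated probability measures $\tilde\mu_n:=\tilde\mu_{\gamma_n}$ all have support in the fixed compact set $S:=\{(x,v)\in TM\mid \lVert v\rVert_x\leq K\}$. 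Up to subsequence, Lemma \ref{WeakCompactness} provides a weak limit $\tilde\mu$, which is a closed probability measure supported in $S$.

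Next I would argue that $\tilde\mu$ is actually a Mather measure for $L^0$. The $u_{\lambda_n}$-calibration rewritten as an integral against $\tilde\mu_n$ reads
\[
\int_{TM}\!\big[L(x,v,\lambda_n u_{\lambda_n}(x))+c(H^0)\big]\,\dd\tilde\mu_n(x,v)=\frac{u_{\lambda_n}(\gamma_n(b_n))-u_{\lambda_n}(\gamma_n(a_n))}{b_n-a_n},
\]
and the right-hand side tends to $0$ since the $u_{\lambda_n}$ are equibounded by Theorem \ref{equibounded} while $b_n-a_n\to\infty$. Applying Lemma \ref{Maj1} with the compact set $S$ and the function $u_{\lambda_n}$ to the left-hand side gives
\[
\int_{TM}L(x,v,\lambda_n u_{\lambda_n}(x))\,\dd\tilde\mu_n=\int_{TM}L(x,v,0)\,\dd\tilde\mu_n+\lambda_n\!\int_{TM}u_{\lambda_n}(x)\tfrac{\partial L}{\partial u}(x,v,0)\,\dd\tilde\mu_n+o(1),
\]
where the error term is $o(1)$ because $\lambda_n\|u_{\lambda_n}\|_\infty\to 0$ and $\eta_S\to 0$ at $0$. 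The middle term is $O(\lambda_n)$ since $\partial L/\partial u(\cdot,\cdot,0)$ is bounded on the compact $S$ (its boundedness, and in fact continuity, follows from the uniform convergence of difference quotients on compact sets given by \textbf{(L3)}, since continuous difference quotients converge uniformly to a continuous limit). Thus passing to the limit and using weak convergence $\tilde\mu_n\weakcv\tilde\mu$ applied to the continuous bounded function $L(\cdot,\cdot,0)$ on $S$ yields
\[
\int_{TM}\big[L(x,v,0)+c(H^0)\big]\,\dd\tilde\mu=0,
\]
so the closed measure $\tilde\mu$ is indeed a Mather measure for $L^0$.

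Finally I would conclude: since $(x,v)\mapsto\partial L/\partial u(x,v,0)$ is continuous and bounded on $S$, weak convergence of $\tilde\mu_n$ to $\tilde\mu$ yields
\[
\int_{TM}\tfrac{\partial L}{\partial u}(x,v,0)\,\dd\tilde\mu(x,v)=\lim_n\int_{TM}\tfrac{\partial L}{\partial u}(x,v,0)\,\dd\tilde\mu_n(x,v)=\lim_n\frac{1}{b_n-a_n}\int_{a_n}^{b_n}\tfrac{\partial L}{\partial u}(\gamma_n(s),\dot\gamma_n(s),0)\,\dd s\geq c,
\]
contradicting the hypothesis that $c>\sup_{\tilde\mu\in\Mis(L^0)}\int\partial L/\partial u(\cdot,\cdot,0)\,\dd\tilde\mu$. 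The symmetric argument handles the $c'$ inequality. The main obstacle is not deep: it is the careful identification of the weak limit as a Mather measure, which is exactly where Lemma \ref{Maj1} and the equi-boundedness from Theorem \ref{equibounded} combine to kill the $\lambda_n$-dependence of the calibration identity.
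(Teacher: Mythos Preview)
Your proof is correct and follows the same contradiction/compactness strategy as the paper: negate the conclusion, extract a weakly convergent sequence of empirical measures supported in a common compact set, identify the limit as a Mather measure for $L^0$, and derive a contradiction with the strict bound on $c$ (resp.\ $c'$). The paper's own argument is actually more sketchy at the key step---it simply asserts that ``using the fact that the curves $\gamma_n$ are $u_{\lambda_n}$-calibrated and that $u_{\lambda_n}$ are equibounded, we can show that $\tilde\mu$ is a Mather measure for $L^0$''---whereas you explicitly justify this via Lemma~\ref{Maj1} and the continuity of $\partial L/\partial u(\cdot,\cdot,0)$ deduced from \textbf{(L3)}; your version is in fact the more complete one.
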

\begin{proof} 
 Let us first prove the right hand side inequality of \eqref{twoccineq}. We prove it by contradiction. If it is false, we can find a sequence $\lambda_n\in ] 0, 1]$, 
with $\lambda_n\to 0$, a viscosity solution $u_{\lambda_n}$ of  
 (HJ$_{\lambda_n}$), and  a $u_{\lambda_n}$-calibrated curve
$\gamma_n:[a_n,b_n]\to M$, with $b_n-a_n\to\infty$ such that
\begin{equation}\label{Tonton}
\frac1{b_n-a_n}\int_{a_n}^{b_n} \frac{\partial L}{\partial u}(\gamma_n(s),\dot\gamma_n(s),0)\,\dd s\geq c.
\end{equation}

We then define a probability measure $\tilde \mu_n$ on $TM$ by 
$$\int_{TM} f(x,v) \, \dd\tilde \mu_n :=\frac1{b_n-a_n}\int_{a_n}^{b_n}f(\gamma_n(s),\dot\gamma_n(s))\,\dd s, \text{ for $f\in C_c(TM,\R)$.}$$

By Lemma \ref{Lip2}, all the measures $\tilde\mu_{n}$ have support in the compact set $\{(x,v)\mid \lVert v\rVert_x\leq K=K(1)\}$.
Therefore, extracting a subsequence if necessary, we can assume that $\tilde\mu_{n}$ converges weakly to
$\tilde\mu$. 

By Lemma \ref{WeakCompactness}, $\tilde\mu$ is a closed measure.
Then, using the fact that the curves $\gamma_n$ are $u_{\lambda_n}$-calibrated and that $u_{\lambda_n}$ are equibounded (by Theorem \ref{equibounded}), we can show that $\tilde\mu$ is a Mather measure for $L^0$. Thus $\int_{TM}\partial L/\partial u(x,v,0)\,\dd\tilde\mu<c$,
but \eqref{Tonton} implies $\int_{TM}\partial L/\partial u(x,v,0)\,\dd\tilde\mu\geq c$, which leads to a contradiction. 
We have thus proved the right hand side inequality of \eqref{twoccineq}. 

As for the left hand side inequality of \eqref{twoccineq}, it can be proved in the same fashion as above.
\end{proof}

Note that we are assuming that {\bf(L4)} holds, namely 
$$\int_{TM}\frac{\partial L}{\partial u}(x,v,0)\,\dd\tilde\mu(x,v)<0,\text{ for all Mather measures $\tilde\mu\in\Mis(L^0)$.}$$
Since $\Mis(L^0)$ is compact, we can find $\epsilon>0, \epsilon'>0$ and $\epsilon<\epsilon'$, such that 
\begin{equation}\label{boundepsilon}
	-\epsilon'<\inf_{\tilde\mu\in \Mis({L^0})}\int_{TM}\frac{\partial L}{\partial u}(x,v,0)\,\dd\tilde\mu\leq \sup_{\tilde\mu\in \Mis({L^0})}\int_{TM}\frac{\partial L}{\partial u}(x,v,0)\,\dd\tilde\mu<-\epsilon.
\end{equation}
As a corollary of Lemma \ref{essentiel}, we obtain the following property.

\begin{cor}\label{cortwoestim}
We can find $\lambda_0>0$ and $T_0>0$ such that for any $u_{\lambda}$-calibrated curve
$\gamma_\lambda: ]-\infty,0] \to M$, with $\lambda\in ]0,\lambda_0]$, one has
\begin{enumerate}
	\item [\rm (i)]for any $t\in]-\infty, -T_0]$,
\begin{equation}\label{L_uleqepsilon}
\epsilon' t\leq \int_{t}^0\frac{\partial L}{\partial u}(\gamma_\lambda(s),\dot\gamma_\lambda(s),0)\,\dd s\leq \epsilon t.
\end{equation}
As a consequence, 
\begin{equation}\label{elamL_u0}
	 e^{\lambda\int_{-\infty}^0\frac{\partial L}{\partial u}(\gamma_\lambda(s),\dot\gamma_\lambda(s),0)\,\dd s}=0.
\end{equation}
    \item [\rm (ii)] for any $T\geq T_0$,
    \begin{equation}\label{integ_0Tlam}
     \frac{e^{-\lambda\epsilon' T_0}-e^{-\lambda\epsilon' T}}{\lambda\epsilon'}	\leq\int_{-T}^0e^{\lambda\int_{t}^0\frac{\partial L}{\partial u}(\gamma_\lambda(s),\dot\gamma_\lambda(s),0)\,\dd s}\,\dd t\leq T_0+\frac{1}{\lambda \epsilon}.
\end{equation}
In particular,   
\begin{align}\label{integ_0infty}
	\frac{e^{-\lambda\epsilon' T_0}}{\lambda\epsilon'}\leq  \int_{-\infty}^0e^{\lambda\int_{t}^0\frac{\partial L}{\partial u}(\gamma_\lambda(s),\dot\gamma_\lambda(s),0)\,\dd s}\,\dd t\leq  T_0+\frac{1}{\lambda\epsilon}.
\end{align}

\end{enumerate}
\end{cor}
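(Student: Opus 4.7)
The plan is to derive both (i) and (ii) by specializing Lemma \ref{essentiel} to the choice $c=-\epsilon$ and $c'=-\epsilon'$, which is legitimized by the bounds \eqref{boundepsilon}. Let $\lambda_0>0$ and $T_0>0$ be the constants furnished by that lemma for this pair. The crux is the standard remark made just after \eqref{calibration2}: if $\gamma_\lambda:\,]-\infty,0]\to M$ is $u_\lambda$-calibrated, then every restriction $\gamma_\lambda|_{[t,0]}$ is also $u_\lambda$-calibrated. Hence whenever $-t\geq T_0$, Lemma \ref{essentiel} applies to the subcurve of length $-t$ and yields
$$-\epsilon' \,<\, \frac{1}{-t}\int_t^0 \frac{\partial L}{\partial u}(\gamma_\lambda(s),\dot\gamma_\lambda(s),0)\,\dd s \,<\, -\epsilon.$$
Multiplying by $-t>0$ and rearranging immediately gives the two-sided estimate \eqref{L_uleqepsilon} of part (i). The consequence \eqref{elamL_u0} follows at once: the upper half of (i) forces $\int_t^0 \partial L/\partial u\,\dd s\leq \epsilon t\to-\infty$ as $t\to-\infty$, so the exponential in question tends to $0$.

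For part (ii), I first record that condition \textbf{(L0)} (non-increasingness of $L$ in $u$) combined with the existence of the derivative $\partial L/\partial u(x,v,0)$ provided by \textbf{(L3)} forces $\partial L/\partial u(x,v,0)\leq 0$ pointwise. Consequently $\int_t^0 \partial L/\partial u\,\dd s\leq 0$ and $e^{\lambda\int_t^0 \partial L/\partial u\,\dd s}\leq 1$ for every $t\leq 0$ and every $\lambda>0$. Now split $\int_{-T}^0=\int_{-T_0}^0+\int_{-T}^{-T_0}$. The first piece contributes at most $T_0$ by the trivial bound just observed. On the second piece, the right-hand inequality of (i) gives $e^{\lambda\int_t^0 \partial L/\partial u\,\dd s}\leq e^{\lambda\epsilon t}$, and evaluating $\int_{-T}^{-T_0} e^{\lambda\epsilon t}\,\dd t = (e^{-\lambda\epsilon T_0}-e^{-\lambda\epsilon T})/(\lambda\epsilon)\leq 1/(\lambda\epsilon)$ produces the upper bound in \eqref{integ_0Tlam}. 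Dually, the left-hand inequality of (i) yields $e^{\lambda\int_t^0 \partial L/\partial u\,\dd s}\geq e^{\lambda\epsilon' t}$ on $[-T,-T_0]$, and discarding the nonnegative contribution on $[-T_0,0]$ gives the matching lower bound. Finally \eqref{integ_0infty} follows from \eqref{integ_0Tlam} by monotone convergence as $T\to+\infty$.

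No step is a genuine obstacle once Lemma \ref{essentiel} is in hand: the whole argument reduces to applying that lemma to restrictions of the calibrated curve and evaluating an elementary exponential integral. The only book-keeping care needed is the sign flip when multiplying the averaged estimate by $-t>0$, and the observation that it is precisely hypothesis \textbf{(L0)} that forces the exponentials to decay rather than blow up as $T\to\infty$, so the upper bound in \eqref{integ_0infty} remains finite for each fixed $\lambda>0$.
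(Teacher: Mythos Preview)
Your proposal is correct and follows essentially the same route as the paper: apply Lemma \ref{essentiel} with $c=-\epsilon$, $c'=-\epsilon'$ to restrictions $\gamma_\lambda|_{[t,0]}$ to obtain (i), then use $\partial L/\partial u\leq 0$ (from \textbf{(L0)}+\textbf{(L3)}) and the split $\int_{-T}^0=\int_{-T_0}^0+\int_{-T}^{-T_0}$ together with (i) to handle (ii). The paper's own proof is identical in structure and computation.
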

\begin{proof}
By \eqref{boundepsilon}, we can apply Lemma \ref{essentiel} with $c=-\epsilon$ and $c'=-\epsilon'$ to find  two constants $\lambda_0>0$ and $T_0>0$ such that property (i) holds. 

We now turn to verify property (ii). By conditions {\bf(L0)} and {\bf(L3)}, we have $\frac{\partial L}{\partial u}(x,v,0)\leq 0$. This together with \eqref{L_uleqepsilon} implies that for every $T\geq T_0$, 
\begin{align*}
	\int_{-T}^0e^{\lambda\int_{t}^0\frac{\partial L}{\partial u}(\gamma_\lambda(s),\dot\gamma_\lambda(s),0)\,\dd s}\,\dd t
	=&\int_{-T_0}^0e^{\lambda\int_{t}^0\frac{\partial L}{\partial u}(\gamma_\lambda(s),\dot\gamma_\lambda(s),0)\,\dd s}\,\dd t+\int_{-T}^{-T_0}e^{\lambda\int_{t}^0\frac{\partial L}{\partial u}(\gamma_\lambda(s),\dot\gamma_\lambda(s),0)\,\dd s}\,\dd t\\
	\leq & \int_{-T_0}^0 e^{0} \,\dd t+\int_{-T}^{-T_0}e^{\lambda\epsilon t}\,\dd t\\
	= &T_0 +\frac{e^{-\lambda \epsilon T_0}-e^{-\lambda \epsilon T}}{\lambda \epsilon}\leq T_0+\frac{1}{\lambda \epsilon},
\end{align*}
so it proves  the right hand side inequality of \eqref{integ_0Tlam}.  On the other side, by \eqref{L_uleqepsilon} we obtain

\begin{equation*}
	\int_{-T}^0e^{\lambda\int_{t}^0\frac{\partial L}{\partial u}(\gamma_\lambda(s),\dot\gamma_\lambda(s),0)\,\dd s}\,\dd t\geq \int_{-T}^{-T_0}e^{\lambda\int_{t}^0\frac{\partial L}{\partial u}(\gamma_\lambda(s),\dot\gamma_\lambda(s),0)\,\dd s}\,\dd t \geq \int_{-T}^{-T_0}e^{\lambda\epsilon' t}\,\dd t=\frac{e^{-\lambda\epsilon' T_0}-e^{-\lambda\epsilon' T}}{\lambda\epsilon'}
\end{equation*}
which finally proves \eqref{integ_0Tlam}. As a consequence, inequality \eqref{integ_0infty} follows by sending $T\to +\infty$.
\end{proof}

For $x\in M$ and a viscosity solution $u_\lambda$ of equation \eqref{HJlambda} with $\lambda\in]0,\lambda_0]$, we fix a $u_{\lambda}$-calibrated curve $\galax: ]-\infty,0] \to M$ with $\galax(0)=x$ given in Proposition \ref{calibrationThmLambda}. Indeed, by Lemma \ref{Lip2} the curves $\galax$ are Lipschitz with Lipschitz constant $\leq K$. Then, for our purpose, we define a probability measure $\tilde \mu^x_\lambda$ on $TM$ as the following: 
\begin{align}\label{probmeasure_defnition}
\int_{TM}f(y,v)\,\dd\tilde \mu^x_\lambda(y,v):=
\frac1{\int_{-\infty}^0e^{\lambda \int_{t}^0\frac{\partial L}{\partial u}(\galax(s),\dgalax(s),0)\,\dd s}\,\dd t}
\int_{-\infty}^0f(\galax(t),\dgalax(t))\,
e^{\lambda\int_{t}^0\frac{\partial L}{\partial u}(\galax(s),\dgalax(s),0)\,\dd s}\,\dd t
\end{align}
for every $f\in C(TM)$.  This measures $\mu^x_\lambda$ is well defined as a result of Corollary \ref{cortwoestim}.

%%%%%%%%%%%%%%%%%%%%%%%%%%%%%%%%%%%%%%%%%%%%%%%
\begin{prop}\label{prop_relativecompact}
The probability measures $\{\tilde\mu^x_\lambda\}_{\lambda\in]0,\lambda_0]}$ defined above have support contained in a common compact subset of $TM$. Hence they are relatively compact in the space of probability measures on $TM$. Moreover, if $\tilde\mu^x_{\lambda_n}$ weakly converges to a measure $\tilde\mu$ for some sequence $\lambda_n\to 0$, then $\tilde\mu$ is a Mather measure of $L^0$.	
\end{prop}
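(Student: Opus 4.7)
The plan is to verify the three assertions in order. For the common compact support, I would invoke Lemma \ref{Lip2}, which provides a uniform Lipschitz constant $K=K(\lambda_0)$ for all $u_\lambda$-calibrated curves $\galax$ with $\lambda\in{}]0,\lambda_0]$. Since $\tilde\mu^x_\lambda$ is an absolutely continuous weighted average along the lifted curve $t\mapsto (\galax(t),\dgalax(t))$, its support lies in the fixed compact set $\{(y,v)\in TM:\|v\|_y\leq K\}$. Relative compactness in the weak topology is then immediate by Prokhorov's theorem, exactly as in Lemma \ref{WeakCompactness}.

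For the main assertion, suppose $\tilde\mu^x_{\lambda_n}\weakcv\tilde\mu$ with $\lambda_n\to 0$. Write $F_\lambda(t)=\frac{\partial L}{\partial u}(\galax(t),\dgalax(t),0)$ and let $D_\lambda$ denote the normalizing integral in \eqref{probmeasure_defnition}. The key quantitative inputs are $D_\lambda^{-1}=O(\lambda)$ from \eqref{integ_0infty}, and $e^{\lambda\int_t^0 F_\lambda(s)\,\dd s}\to 0$ as $t\to-\infty$ from \eqref{elamL_u0}. To show $\tilde\mu$ is closed, I would take $\varphi\in C^1(M)$, recognize $d_x\varphi(v)$ along the curve as $\frac{\dd}{\dd t}\varphi(\galax(t))$, and integrate by parts on $]-\infty,0]$ (the $-\infty$ boundary term vanishes thanks to boundedness of $\varphi$ and the exponential decay), obtaining
$$\int_{TM} d_x\varphi(v)\, \dd\tilde\mu^x_\lambda \;=\; \frac{\varphi(x)}{D_\lambda} + \lambda\int_{TM}\varphi(y)\,\frac{\partial L}{\partial u}(y,v,0)\,\dd\tilde\mu^x_\lambda(y,v).$$
Both terms on the right are $O(\lambda)$ — the second because $\varphi$ and $\partial L/\partial u(\cdot,\cdot,0)$ are bounded on the common compact support — so weak convergence yields $\int_{TM} d_x\varphi(v)\,\dd\tilde\mu=0$.

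For Mather-minimality, I would run the same integration-by-parts trick on the calibration identity $\frac{\dd}{\dd t}u_\lambda(\galax(t))=L(\galax(t),\dgalax(t),\lambda u_\lambda(\galax(t)))+c(H^0)$, using $\|u_\lambda\|_\infty\leq\mathfrak{C}$ (Theorem \ref{equibounded}) to kill the boundary term at $-\infty$, obtaining
$$\int_{TM}\bigl[L(y,v,\lambda u_\lambda(y))+c(H^0)\bigr]\,\dd\tilde\mu^x_\lambda \;=\; \frac{u_\lambda(x)}{D_\lambda}+\lambda\int_{TM} u_\lambda(y)\,\frac{\partial L}{\partial u}(y,v,0)\,\dd\tilde\mu^x_\lambda.$$
Then I would apply Lemma \ref{Maj1} with the common compact support to expand $\int L(y,v,\lambda u_\lambda(y))\,\dd\tilde\mu^x_\lambda$ to first order in $\lambda$; the linear term produced cancels exactly the $\lambda\int u_\lambda\,\partial_u L\,\dd\tilde\mu^x_\lambda$ above, leaving
$$\int_{TM}\bigl[L(y,v,0)+c(H^0)\bigr]\,\dd\tilde\mu^x_\lambda \;=\; \frac{u_\lambda(x)}{D_\lambda} + O\!\bigl(\lambda\|u_\lambda\|_\infty\,\eta_S(\lambda\|u_\lambda\|_\infty)\bigr) \;\longrightarrow\; 0.$$
Weak convergence and continuity of $L^0$ on the common compact support then give $\int_{TM}(L^0+c(H^0))\,\dd\tilde\mu=0$; combined with closedness and Proposition \ref{Propertiesclosedmeas}, this identifies $\tilde\mu$ as a Mather measure of $L^0$.

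The main obstacle, and the conceptual raison d'\^etre of the weighting in \eqref{probmeasure_defnition}, is engineering the cancellation in the minimality step. The exponential weight is chosen so that integration by parts introduces a term that matches exactly the linear expansion from Lemma \ref{Maj1} (applied with $u=u_\lambda$). Condition \textbf{(L4)} plays a dual role here: via Lemma \ref{essentiel} it forces $F_\lambda$ to be strictly negative on average along long calibrated orbits, which simultaneously gives $D_\lambda^{-1}=O(\lambda)$ and the decay of the exponential weight at $-\infty$ needed to suppress the boundary terms in both integrations by parts.
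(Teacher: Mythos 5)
Your proposal is correct and follows essentially the same route as the paper: uniform compact support via Lemma \ref{Lip2}, then for closedness and minimality an integration by parts against the exponential weight, with the boundary decay \eqref{elamL_u0} and the lower bound on the normalizing integral from \eqref{integ_0infty} controlling the error terms. The only (cosmetic) difference is in the minimality step, where you invoke Lemma \ref{Maj1} to cancel the first-order term $\lambda\int u_\lambda\,\partial_u L\,\dd\tilde\mu^x_\lambda$ exactly, whereas the paper simply bounds $\Delta_\lambda(t)=L(\galax(t),\dgalax(t),\lambda u_\lambda(\galax(t)))-L^0(\galax(t),\dgalax(t))$ in $L^\infty$ by $O(\lambda)$ using {\bf(L3)}; both yield the same conclusion.
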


\begin{proof}
The first part is a direct consequence of Lemma \ref{Lip2}. Let us pass to prove the second part. Suppose that the sequence $\tilde\mu^x_{\lambda_n}$ weakly converges to $\tilde\mu$, then $\tilde\mu$ has compact support. 

\textbf{The measure $\tilde\mu$ is closed:}  note that for $\lambda\in ]0,\lambda_0]$ and  a $C^1$ function $\phi: M\to \R $, 
\begin{equation} \label{Dphiv}
\begin{aligned}
     \int_{TM} \dd_y\phi(v) \,\dd\tilde{\mu}^x_\lambda(y,v)
     =& \frac{\int_{-\infty}^0 e^{\lambda\int_{t}^0\frac{\partial L}{\partial u}(\galax(s),\dgalax(s),0)\,\dd s} \cdot  \frac{\dd}{\dd t}\phi(\galax(t))\, \dd t}{ \int_{-\infty}^0e^{\lambda \int_{t}^0\frac{\partial L}{\partial u}(\galax(s),\dgalax(s),0)\,\dd s}\,\dd t}\\
     =&\frac{\phi(x)-\int_{-\infty}^0  \phi(\galax(t))\, \frac{\dd}{\dd t}e^{\lambda\int_{t}^0\frac{\partial L}{\partial u}(\galax(s),\dgalax(s),0)\,\dd s} \, \dd t}{ \int_{-\infty}^0e^{\lambda \int_{t}^0\frac{\partial L}{\partial u}(\galax(s),\dgalax(s),0)\,\dd s}\,\dd t}
\end{aligned}
\end{equation}
where for the last line we have used an integration by parts and \eqref{elamL_u0} in Corollary \ref{cortwoestim}. To handle the integral term in the numerator, we use the fact 
\begin{equation}\label{L_u_nonnege}
 \frac{\dd}{\dd t}e^{\lambda \, \int_{t}^0\frac{\partial L}{\partial u}(\galax(s),\dgalax(s),0)\,\dd s} =-\lambda \frac{\partial L}{\partial u}(\galax(t),\dgalax(t),0) \,  e^{\lambda\int_{t}^0\frac{\partial L}{\partial u}(\galax(s),\dgalax(s),0)\,\dd s}\geq 0.
\end{equation}
Hence,
\begin{equation}\label{vkjsdt}
\begin{aligned}
	\left| \int_{-\infty}^0  \phi(\galax(t))\frac{\dd}{\dd t}e^{\lambda\int_{t}^0\frac{\partial L}{\partial u}(\galax(s),\dgalax(s),0)\,\dd s} \, \dd t\right|
	\leq &\|\phi\|_\infty  \int_{-\infty}^0 \frac{\dd}{\dd t}e^{\lambda\int_{t}^0\frac{\partial L}{\partial u}(\galax(s),\dgalax(s),0)\,\dd s} \, \dd t\\
	\leq &\|\phi\|_\infty  \left(1-e^{\lambda\int_{-\infty}^0\frac{\partial L}{\partial u}(\galax(s),\dgalax(s),0)\,\dd s}\right)\\
	= &\|\phi\|_\infty
	\end{aligned}
	\end{equation}
where for the last equality we have used \eqref{elamL_u0}. Then, the last inequality together with \eqref{Dphiv}  and \eqref{integ_0infty} implies that
\begin{align*}
	\left|\int_{TM} \dd_y\phi(v) \,\dd\tilde{\mu}^x_\lambda(y,v)
      \right|\leq \lambda\epsilon' e^{\lambda\epsilon' T_0} (\|\phi\|_\infty+\|\phi\|_\infty)
 \longrightarrow 0
\end{align*}
as $\lambda\to 0$. Thus, if a sequence 
 $\tilde\mu^x_{\lambda_n}$ weakly converges to a measure $\tilde\mu$ as $\lambda_n\to 0$, we get
 \begin{align*}
	 \int_{TM} \dd_y\phi(v)\,\dd\tilde\mu(y,v) = \lim_{n\to \infty}\int_{TM}  \dd_y\phi(v) \,\dd\tilde{\mu}^x_{\lambda_n}(y,v)=0,
\end{align*}
which implies that $\tilde\mu$ is a closed measure.

\textbf{The measure $\tilde\mu$ is minimizing:} for each positive $\lambda\leq \lambda_0$, the function $t\in ]-\infty,0]\mapsto u_\lambda(\galax(t))$ is Lipschitz continuous and hence differentiable almost everywhere. Since $\galax$ is $u_\lambda$-calibrated,
\begin{align*}
	u_\lambda(x)-u_\lambda(\galax(t))=\int_t^0 L(\galax(s),\dgalax(s),\lambda u_\lambda(\galax(s)))+c(H^0)\,\dd s
\end{align*}
for any $t<0$, which in turn implies
\begin{align}\label{diffulamt}
	\frac{\dd}{\dd t}u_\lambda(\galax(t))=L(\galax(t),\dgalax(t),\lambda u_\lambda(\galax(t)))+c(H^0),\quad \textrm{a.e.}~t<0.
\end{align}

We want to show that $\int_{TM} L^0(y,v)+c(H^0)\,\dd \tilde{\mu}(y,v)=0$. For simplicity we set 
\begin{align*}
	\Delta_\lambda(t):=L(\galax(t),\dgalax(t),\lambda u_\lambda(\galax(t)))-L(\galax(t),\dgalax(t),0),
\end{align*}
which is a bounded Lebesgue measureable function. Then using \eqref{diffulamt} we derive that
\begin{align}
	&\int_{TM} L^0(y,v)+c(H^0)\,\dd\tilde{\mu}^x_\lambda(y,v)\nonumber\\
     =&  \frac{\int_{-\infty}^0  \big(L(\galax(t),\dgalax(t),0)+c(H^0)\big)\cdot e^{\lambda\int_{t}^0\frac{\partial L}{\partial u}(\galax(s),\dgalax(s),0)\,\dd s}\,\dd t }{ \int_{-\infty}^0e^{\lambda \int_{t}^0\frac{\partial L}{\partial u}(\galax(s),\dgalax(s),0)\,\dd s}\,\dd t}\nonumber\\
     =& \frac{\int_{-\infty}^0  \big(L(\galax(t),\dgalax(t),\lambda u_\lambda(\galax(t)))+c(H^0)-\Delta_\lambda(t) \big)\cdot e^{\lambda\int_{t}^0\frac{\partial L}{\partial u}(\galax(s),\dgalax(s),0)\,\dd s}\,\dd t }{ \int_{-\infty}^0e^{\lambda \int_{t}^0\frac{\partial L}{\partial u}(\galax(s),\dgalax(s),0)\,\dd s}\,\dd t}\nonumber\\
     =&  \frac{\int_{-\infty}^0  \left(\frac{\dd}{\dd t}u_\lambda(\galax(t))-\Delta_\lambda(t)\right)\cdot e^{\lambda\int_{t}^0\frac{\partial L}{\partial u}(\galax(s),\dgalax(s),0)\,\dd s}\,\dd t }{ \int_{-\infty}^0e^{\lambda \int_{t}^0\frac{\partial L}{\partial u}(\galax(s),\dgalax(s),0)\,\dd s}\,\dd t}\nonumber\\
     =&  \frac{u_\lambda(x)-\int_{-\infty}^0 u_\lambda(\galax(t)) 
     \frac{\dd}{\dd t}e^{\lambda\int_{t}^0\frac{\partial L}{\partial u}(\galax(s),\dgalax(s),0)\,\dd s}\,\dd t }{ \int_{-\infty}^0e^{\lambda \int_{t}^0\frac{\partial L}{\partial u}(\galax(s),\dgalax(s),0)\,\dd s}\,\dd t}
     - \frac{ \int_{-\infty}^0\Delta_\lambda(t)\, e^{\lambda\int_{t}^0\frac{\partial L}{\partial u}(\galax(s),\dgalax(s),0)\,\dd s}\,\dd t}{ \int_{-\infty}^0e^{\lambda \int_{t}^0\frac{\partial L}{\partial u}(\galax(s),\dgalax(s),0)\,\dd s}\,\dd t}
     \label{vnsnkai}
\end{align}
where we have used an integration by parts for the last line. Similar to \eqref{vkjsdt} we find that
\begin{align}\label{cawiuq1}
	\left|\int_{-\infty}^0 u_\lambda(\galax(t)) 
     \frac{\dd}{\dd t}e^{\lambda\int_{t}^0\frac{\partial L}{\partial u}(\galax(s),\dgalax(s),0)\,\dd s}\,\dd t\right|\leq \|u_\lambda\|_\infty.
\end{align}
Let $S=\{(x,v)\in TM \mid \lVert v\rVert_x\leq K\}$ with $K$ given in  Lemma \ref{Lip2}, by condition {\bf(L3)} we have a modulus of continuity $\eta_S$ such that  
\begin{equation}\label{cawiuq2}
	\|\Delta_\lambda(t)\|_{L^\infty} \leq \lambda\| u_\lambda\|_\infty \max_{ S}\left|\frac{\partial L}{\partial u}(x,v,0)\right|+\lambda \|u_\lambda\|_\infty \eta_S (\lambda \|u_\lambda\|_\infty)\leq \lambda C_1
\end{equation}
for some constant $C_1>0$ since $\{u_\lambda\}_{\lambda\in (0,\lambda_0]}$ is equibounded. Substituting \eqref{cawiuq1}--\eqref{cawiuq2} into \eqref{vnsnkai}, and using Corollary \ref{cortwoestim} (ii), we obtain
\begin{align*}
	\left|\int_{TM} L^0(y,v)+c(H^0)\,\dd\tilde{\mu}^x_\lambda(y,v)\right|\leq\frac{2\lambda\epsilon'\|u_\lambda\|_\infty}{e^{-\lambda \epsilon'T_0}}+\|\Delta_\lambda\|_{L^\infty}\leq\frac{2\lambda\epsilon'\|u_\lambda\|_\infty}{e^{-\lambda \epsilon'T_0}}+\lambda C_1\longrightarrow 0
\end{align*}
as $\lambda\to 0$. Hence, if a sequence 
 $\{\tilde\mu^x_{\lambda_n}\}_n$ weakly converges to a measure $\tilde\mu$, then
 \begin{align*}
	 \int_{TM} L^0(y,v)+c(H^0) \,\dd\tilde\mu(y,v) = \lim_{n\to \infty}\int_{TM} L^0(y,v)+c(H^0)\,\dd\tilde{\mu}^x_{\lambda_n}(y,v)=0.
\end{align*}
Therefore, we conclude that $\tilde\mu$ is a Mather measure.
\end{proof}

As a corollary, using Proposition \ref{prop_relativecompact} and condition {\bf(L4)} we can easily derive: 
\begin{cor}\label{cor_nonvanish}
We can find a suitably small $\lambda_1\in ]0,\lambda_0[$, such that for every $\lambda\in ]0,\lambda_1]$ and $u_\lambda$-calibrated curve $\gamma^x_\lambda:]-\infty,0]\to M$ with $\gamma^x_\lambda(0)=x$, 
\begin{align*}
	\int_{TM} \frac{\partial L}{\partial u}(y,v,0)\, \dd\tilde\mu^x_\lambda(y,v)< 0.
\end{align*}
\end{cor}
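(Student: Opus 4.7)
The plan is to argue by contradiction using the relative compactness established in Proposition \ref{prop_relativecompact} together with hypothesis (L4). If the conclusion were false, there would exist a sequence $\lambda_n \downarrow 0$ with $\lambda_n \leq \lambda_0$, points $x_n \in M$, and $u_{\lambda_n}$-calibrated curves $\gamma^{x_n}_{\lambda_n} : ]-\infty,0] \to M$ with $\gamma^{x_n}_{\lambda_n}(0) = x_n$ such that the associated probability measures $\tilde\mu^{x_n}_{\lambda_n}$ defined by \eqref{probmeasure_defnition} satisfy
$$\int_{TM} \frac{\partial L}{\partial u}(y,v,0) \, \dd\tilde\mu^{x_n}_{\lambda_n}(y,v) \geq 0 \quad \text{for every } n.$$
Since by Proposition \ref{prop_relativecompact} the supports of all these measures lie in a common compact subset of $TM$, I would extract a subsequence so that $\tilde\mu^{x_n}_{\lambda_n}$ converges weakly to a probability measure $\tilde\mu$, which by the same proposition is a Mather measure of $L^0$.

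Before passing to the limit under the integral sign, I must verify that $(y,v)\mapsto \partial L/\partial u(y,v,0)$ is continuous on $TM$. Although (L3) only asserts pointwise existence of this partial together with a uniform modulus of differentiability on compact sets, these two facts together force continuity: for every fixed $u \neq 0$ the difference quotient $(y,v)\mapsto [L(y,v,u)-L(y,v,0)]/u$ is continuous by continuity of $L$, and by (L3) these converge uniformly on any compact $S\subset TM$ to $\partial L/\partial u(\cdot,\cdot,0)$ as $u \to 0$. A uniform limit of continuous functions being continuous, $\partial L/\partial u(\cdot,\cdot,0)$ is continuous on $TM$, hence bounded on the common compact containing the supports.

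With this continuity in hand, weak convergence yields
$$\int_{TM} \frac{\partial L}{\partial u}(y,v,0) \, \dd\tilde\mu(y,v) = \lim_{n\to\infty} \int_{TM} \frac{\partial L}{\partial u}(y,v,0) \, \dd\tilde\mu^{x_n}_{\lambda_n}(y,v) \geq 0,$$
which contradicts condition (L4) applied to the Mather measure $\tilde\mu$. The main obstacle is the short but necessary continuity argument for $\partial L/\partial u(\cdot,\cdot,0)$, which is what makes (L3) usable for weak limits; everything else is a routine weak-compactness contradiction argument combined with the already-established fact that limits of the measures $\tilde\mu^{x_n}_{\lambda_n}$ are Mather measures for $L^0$.
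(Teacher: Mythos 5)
Your argument is correct and is essentially the derivation the paper intends: the paper gives no explicit proof, merely citing Proposition \ref{prop_relativecompact} and {\bf(L4)}, and your weak-compactness contradiction, together with the observation that {\bf(L3)} forces $(y,v)\mapsto \partial L/\partial u(y,v,0)$ to be continuous (uniform limit on compacta of the continuous difference quotients), is exactly what is needed to pass to the limit. One small caveat: since your contradicting sequence lets the base points $x_n$, the solutions $u_{\lambda_n}$ and the calibrated curves vary, you invoke Proposition \ref{prop_relativecompact} slightly beyond its literal statement, which fixes $x$; this is harmless because all estimates in its proof (via Theorem \ref{equibounded}, Lemma \ref{Lip2} and Corollary \ref{cortwoestim}) are uniform in $x$, in the choice of solution and in the calibrated curve, so the same proof shows that any weak limit of such a diagonal sequence $\tilde\mu^{x_n}_{\lambda_n}$ with $\lambda_n\to 0$ is still a closed, hence Mather, measure of $L^0$, and your contradiction with {\bf(L4)} goes through.
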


The next lemma will be crucial for the proof of Theorem \ref{main}.
\begin{lemme}\label{lem_ugeqsubsol}
	Let $w$ be any viscosity subsolution of the critical equation \eqref{HJ000}. For every $x\in M$ and $\lambda\in ]0, \lambda_1]$ where   $\lambda_1$ is given in Corollary \ref{cor_nonvanish}, we have 
	\begin{equation}\label{w_lamgeqsth}
	u_{\lambda}(x)\geq w(x)-\frac{\int_{TM} w(y)\,\frac{\partial L}{\partial u}(y,v,0)\, \dd\tilde\mu^x_\lambda(y,v)}{\int_{TM} \frac{\partial L}{\partial u}(y,v,0)\, \dd\tilde\mu^x_\lambda(y,v) }+R_\lambda(x),
	\end{equation}
	and the term $R_\lambda(x)$ satisfies 
	\[\lim_{\lambda\to 0^+}R_\lambda(x)=0.\]
\end{lemme}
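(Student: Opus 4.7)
The plan is to compare $u_\lambda$ and $w$ along the calibrated curve $\galax:\,]-\infty,0]\to M$ with $\galax(0)=x$, weighting the comparison by the exponential factor $e^{\lambda\int_t^0 \frac{\partial L}{\partial u}(\galax(s),\dgalax(s),0)\,\dd s}$ that already appears in the definition of $\tilde\mu^x_\lambda$. The crucial observation is the identity $\int_{-\infty}^0\frac{\partial L}{\partial u}(\galax(s),\dgalax(s),0)\cdot e^{\lambda\int_t^0\frac{\partial L}{\partial u}\,\dd s}\,\dd t=-\frac{1}{\lambda}$ (obtained by direct integration using \eqref{L_u_nonnege} and \eqref{elamL_u0}), which yields
\[
\int_{TM}\frac{\partial L}{\partial u}(y,v,0)\,\dd\tilde\mu^x_\lambda(y,v)=-\frac{1}{\lambda Z_\lambda},
\]
where $Z_\lambda:=\int_{-\infty}^0 e^{\lambda\int_t^0\frac{\partial L}{\partial u}\,\dd s}\,\dd t$ is the normalization constant from \eqref{probmeasure_defnition}.

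The first main step is to derive a useful identity for $u_\lambda(x)$. Since $\galax$ is $u_\lambda$-calibrated, $\frac{\dd}{\dd t}u_\lambda(\galax(t))=L(\galax(t),\dgalax(t),\lambda u_\lambda(\galax(t)))+c(H^0)$ a.e. Applying condition {\bf(L3)} with the compact set $S=\{(x,v)\in TM\mid \lVert v\rVert_x\leq K\}$ from Lemma \ref{Lip2}, I can write $L(\cdot,\cdot,\lambda u_\lambda)=L(\cdot,\cdot,0)+\lambda u_\lambda\frac{\partial L}{\partial u}(\cdot,\cdot,0)+\Delta_\lambda(t)$ with $\lvert\Delta_\lambda(t)\rvert\leq \lambda\lVert u_\lambda\rVert_\infty\eta_S(\lambda\lVert u_\lambda\rVert_\infty)$. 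Multiplying by $e^{\lambda\int_t^0\frac{\partial L}{\partial u}\,\dd s}$, integrating over $]-\infty,0]$, and integrating the left-hand side by parts (using that $u_\lambda$ is bounded and the weight vanishes at $-\infty$ by \eqref{elamL_u0}), the cross terms $\lambda\int u_\lambda\,\frac{\partial L}{\partial u}\cdot e^{\lambda\int\cdots}\,\dd t$ on both sides cancel, leaving
\[
u_\lambda(x)=\int_{-\infty}^0\bigl[L(\galax(t),\dgalax(t),0)+c(H^0)\bigr]\,e^{\lambda\int_t^0\frac{\partial L}{\partial u}\,\dd s}\,\dd t+E_\lambda(x),
\]
with $\lvert E_\lambda(x)\rvert\leq \lambda\lVert u_\lambda\rVert_\infty\eta_S(\lambda\lVert u_\lambda\rVert_\infty)Z_\lambda$.

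The second step is to use the critical subsolution property of $w$. By Theorem \ref{calibrationThm}, $w$ is Lipschitz and dominated by $L^0+c(H^0)$, so $\frac{\dd}{\dd t}w(\galax(t))\leq L(\galax(t),\dgalax(t),0)+c(H^0)$ a.e. Multiplying by the nonnegative weight, integrating, and performing the same integration by parts gives
\[
\int_{-\infty}^0\bigl[L(\galax(t),\dgalax(t),0)+c(H^0)\bigr]\,e^{\lambda\int_t^0\frac{\partial L}{\partial u}\,\dd s}\,\dd t\geq w(x)+\lambda\int_{-\infty}^0 w(\galax(t))\,\frac{\partial L}{\partial u}\cdot e^{\lambda\int_t^0\frac{\partial L}{\partial u}\,\dd s}\,\dd t.
\]
Combining this with the identity for $u_\lambda(x)$ and rewriting the right-hand integral as $\lambda Z_\lambda\int_{TM} w(y)\,\frac{\partial L}{\partial u}(y,v,0)\,\dd\tilde\mu^x_\lambda$, and invoking $\lambda Z_\lambda=-\bigl(\int_{TM}\frac{\partial L}{\partial u}\,\dd\tilde\mu^x_\lambda\bigr)^{-1}$, I obtain exactly \eqref{w_lamgeqsth} with $R_\lambda(x)=E_\lambda(x)$.

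It remains to verify $R_\lambda(x)\to 0$. By Corollary \ref{cortwoestim}(ii), $\lambda Z_\lambda\leq \lambda T_0+1/\epsilon$ is bounded, and by Theorem \ref{equibounded} the family $\lVert u_\lambda\rVert_\infty$ is bounded by $\mathfrak{C}$; hence $\lvert R_\lambda(x)\rvert\leq \mathfrak{C}\,\eta_S(\lambda\mathfrak{C})\,(\lambda T_0+1/\epsilon)\to 0$ as $\lambda\to 0^+$ since $\eta_S(s)\to 0$ as $s\to 0^+$. The main technical subtlety is the legitimacy of the two integration-by-parts steps on $]-\infty,0]$: this is justified because $u_\lambda\circ\galax$ and $w\circ\galax$ are absolutely continuous (even Lipschitz) on $]-\infty,0]$, both functions are uniformly bounded on $M$, and the boundary term at $-\infty$ vanishes thanks to \eqref{elamL_u0}. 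No pointwise differentiability of $w$ is required; only that $\frac{\dd}{\dd t}(w\circ\galax)\leq L^0(\galax,\dgalax)+c(H^0)$ a.e., which follows directly from the domination property of subsolutions in Theorem \ref{calibrationThm}.
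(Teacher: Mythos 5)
Your proof is correct, and at its core it is the same argument as the paper's: weight the calibration identity along $\galax$ by $e^{\lambda\int_t^0\frac{\partial L}{\partial u}(\galax(s),\dgalax(s),0)\,\dd s}$, integrate by parts so that the $\lambda u_\lambda\,\frac{\partial L}{\partial u}$ cross term is absorbed, control the remainder through the modulus $\eta_S$ from {\bf(L3)} together with Corollary \ref{cortwoestim}(ii), and convert the weighted time integrals into integrals against $\tilde\mu^x_\lambda$ via $\lambda Z_\lambda=-\bigl(\int_{TM}\frac{\partial L}{\partial u}(y,v,0)\,\dd\tilde\mu^x_\lambda\bigr)^{-1}$, where $Z_\lambda$ is the normalizing constant in \eqref{probmeasure_defnition}. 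The one genuine deviation is how you exploit that $w$ is a critical subsolution: the paper first replaces $w$ by a smooth approximate subsolution $w_\delta$ with $H^0(y,d_yw_\delta)\leq c(H^0)+\delta$, applies the Fenchel inequality pointwise, and removes $\delta$ at the end, whereas you use the domination characterization of Theorem \ref{calibrationThm} directly to get $\frac{\dd}{\dd t}(w\circ\galax)\leq L^0(\galax,\dgalax)+c(H^0)$ a.e.; this is legitimate because $w$ is Lipschitz (Lemma \ref{Lip0}), $\galax$ is Lipschitz (Lemma \ref{Lip2}), and almost every $t$ is a Lebesgue point of the bounded function $s\mapsto L^0(\galax(s),\dgalax(s))$, so your route avoids the $C^\infty$ approximation and the $\delta\to 0$ limit altogether. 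You also organize the computation as an exact identity for $u_\lambda(x)$ (up to $E_\lambda$) plus a separate weighted inequality for $w$, while the paper merges the two into a single differential inequality before integrating over $(-T,0]$ and letting $T\to+\infty$; the infinite-interval integrations by parts you perform are justified exactly as you say (Lipschitz integrands, integrable weight by \eqref{integ_0infty}, boundary term killed by \eqref{elamL_u0}), and your final bound $\lvert R_\lambda(x)\rvert\leq(\lambda T_0+\epsilon^{-1})\lVert u_\lambda\rVert_\infty\,\eta_S(\lambda\lVert u_\lambda\rVert_\infty)$ coincides with the paper's. Only a cosmetic slip: in your ``crucial identity'' the first factor of the integrand should be $\frac{\partial L}{\partial u}(\galax(t),\dgalax(t),0)$, evaluated at the integration variable $t$, not $s$.
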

\begin{rem}
	Due to Corollary \ref{cor_nonvanish}, the denominator on the right hand side of \eqref{w_lamgeqsth} is non-zero.
\end{rem}

\begin{proof}
Let  $w(x)$ be any  viscosity subsolution of equation \eqref{HJ000}. By a well-known approximation argument, see
for example \cite{Fathi2012}, for subsolutions of convex Hamilton--Jacobi equations, for each $\delta>0$ we can select  $w_\delta \in C^\infty(M)$	 such that $\|w_\delta-w\|_\infty\leq \delta$ and 
\begin{align*}
	H^0(y, \dd_yw_\delta)\leq c(H^0)+\delta \quad \textup{for all~} y\in M.
\end{align*}
Then, using the Fenchel inequality one gets
\begin{align}\label{fnuibiq}
	L^0(y,v)+c(H^0)\geq L^0(y,v)+H^0(y, \dd_yw_\delta)-\delta\geq \dd_y w_\delta(v)-\delta \quad \textup{for all~} (y,v)\in TM.
\end{align}

For the $u_{\lambda}$-calibrated curve $\galax: ]-\infty,0] \to M$, recalling \eqref{diffulamt} one has
\begin{align*}
	\frac{\dd}{\dd t}u_\lambda(\galax(t))=L(\galax(t),\dgalax(t),\lambda u_\lambda(\galax(t)))+c(H^0),\quad \textrm{a.e.}~t<0
\end{align*}
This, combined with \eqref{fnuibiq},  implies that for a.e. $t<0$
\begin{align}
	\frac{\dd}{\dd t}u_\lambda(\galax(t))\geq & 	\dd_{\galax(t)}w_\delta \big(\dgalax(t)\big)-\delta+L(\galax(t),\dgalax(t),\lambda u_\lambda(\galax(t)))-L^0(\galax(t),\dgalax(t))\nonumber\\
	=&  \frac{\dd}{\dd t}w_\delta(\galax(t))+  \lambda  u_\lambda(\galax(t)) \,\frac{\partial L}{\partial u}(\galax(t),\dgalax(t),0)
	  -\delta+\Omega_{\lambda,x}(t)\label{kdaniw}
\end{align}
where 
\[\Omega_{\lambda,x}(t)=L(\galax(t),\dgalax(t),\lambda u_\lambda(\galax(t)))-L^0(\galax(t),\dgalax(t))-
 \lambda  u_\lambda(\galax(t)) \,\frac{\partial L}{\partial u}(\galax(t),\dgalax(t),0).\]

Multiplying both sides of \eqref{kdaniw} by $e^{\lambda\int^0_t \frac{\partial L}{\partial u}(\galax(s),\dgalax(s),0)\, \dd s}$, we obtain that for a.e  $t<0$,
\begin{align*}
\frac{\dd}{\dd t}\left(u_\lambda(\galax(t))\, e^{\lambda\int^0_t \frac{\partial L}{\partial u}(\galax(s),\dgalax(s),0)\, \dd s} \right)	\geq\left( \frac{\dd}{\dd t} w_\delta(\galax(t))-\delta +\Omega_{\lambda,x}(t)\right)e^{\lambda\int^0_t \frac{\partial L}{\partial u}(\galax(s),\dgalax(s),0)\, \dd s}
\end{align*} 
Then, for any $T\geq T_0$ where $T_0$ is given in Corollary \ref{cortwoestim}, by integrating the above inequality over the interval $(-T,0]$ and using an integration by parts, we obtain
\begin{align*}
    u_\lambda(x)-u_\lambda(\galax(-T))\, e^{\lambda\int^0_{-T} \frac{\partial L}{\partial u}(\galax(s),\dgalax(s),0)\, \dd s} 	\geq & 
    w_\delta(x)-w_\delta(\galax(-T))\,e^{\lambda\int^0_{-T} \frac{\partial L}{\partial u}(\galax(s),\dgalax(s),0)\, \dd s} \\
    &-\int_{-T}^0   w_\delta(\galax(t)) \frac{\dd}{\dd t} \Big(e^{\lambda\int^0_t \frac{\partial L}{\partial u}(\galax(s),\dgalax(s),0)\, \dd s}\Big)\,\dd t
    \\
    &+\int^0_{-T}\left(\Omega_{\lambda,x}(t)-\delta\right)e^{\lambda\int^0_t \frac{\partial L}{\partial u}(\galax(s),\dgalax(s),0)\, \dd s}\,\dd t.
\end{align*}
Sending $\delta\to 0$ and using \eqref{L_uleqepsilon} yields 
\begin{align*}
    u_\lambda(x)\geq & 
    w(x) -\int_{-T}^0   w(\galax(t)) \frac{\dd}{\dd t} \Big(e^{\lambda\int^0_t \frac{\partial L}{\partial u}(\galax(s),\dgalax(s),0)\, \dd s}\Big)\,\dd t-(\|u_\lambda\|_\infty +\|w\|_\infty )\,e^{-\epsilon T}\\
    &\qquad \qquad\qquad\qquad+\int^0_{-T}\Omega_{\lambda,x}(t)\,e^{\lambda\int^0_t \frac{\partial L}{\partial u}(\galax(s),\dgalax(s),0)\, \dd s}\,\dd t
\end{align*}
Furthermore, by letting $T\to +\infty$ it follows that
\begin{align}\label{u_lam_ineq1}
    u_\lambda(x)\geq & 
    w(x)  -\underbrace{\int_{-\infty}^0   w(\galax(t)) \frac{\dd}{\dd t} \Big(e^{\lambda\int^0_t \frac{\partial L}{\partial u}(\galax(s),\dgalax(s),0)\, \dd s}\Big)\,\dd t}_{\mathbf{I}_\lambda}+
    \underbrace{\int^0_{-\infty}\Omega_{\lambda,x}(t)\,e^{\lambda\int^0_t \frac{\partial L}{\partial u}(\galax(s),\dgalax(s),0)\, \dd s}\,\dd t}_{R_\lambda(x)}
\end{align}
Let us now consider the integral $\mathbf{I}_\lambda$: using the measure $\tilde\mu^x_\lambda$ (see \eqref{probmeasure_defnition}) we infer
\begin{align}
	\mathbf{I}_\lambda=&-\lambda\int_{-\infty}^0   w(\galax(t))  \frac{\partial L}{\partial u}(\galax(t),\dgalax(t),0)e^{\lambda\int^0_t \frac{\partial L}{\partial u}(\galax(s),\dgalax(s),0)\, \dd s}\,\dd t \nonumber\\
	    =& -\lambda \int^0_{-\infty}e^{\lambda\int^0_t \frac{\partial L}{\partial u}(\galax(s),\dgalax(s),0)\, \dd s}\,\dd t\cdot\int_{TM}   w(y)  \frac{\partial L}{\partial u}(y,v,0)\,\dd\tilde\mu^x_\lambda(y,v).\label{mabfI_lam}
\end{align}
When $\lambda\in]0,\lambda_1]$, we can derive that 
\begin{align*}
	\lambda \int^0_{-\infty}e^{\lambda\int^0_t \frac{\partial L}{\partial u}(\galax(s),\dgalax(s),0)\, \dd s}\,\dd t
	=&\frac{\lambda \int^0_{-\infty}e^{\lambda\int^0_t \frac{\partial L}{\partial u}(\galax(s),\dgalax(s),0)\, \dd s}\,\dd t}{ \int^0_{-\infty}\frac{\dd}{\dd t}\left(e^{\lambda\int^0_t \frac{\partial L}{\partial u}(\galax(s),\dgalax(s),0)\, \dd s}\right)\,\dd t}\nonumber\\ 
	=&\frac{\int^0_{-\infty}e^{\lambda\int^0_t \frac{\partial L}{\partial u}(\galax(s),\dgalax(s),0)\, \dd s}\,\dd t}{- \int^0_{-\infty}\frac{\partial L}{\partial u}(\galax(t),\dgalax(t),0)\,e^{\lambda\int^0_t \frac{\partial L}{\partial u}(\galax(s),\dgalax(s),0)\, \dd s}\,\dd t}\nonumber\\
	=&\frac{-1}{ \int_{TM}\frac{\partial L}{\partial u}(y,v,0)\,\dd\tilde\mu^x_\lambda(y,v)}, 
\end{align*}
where the denominator is non-zero by Corollary \ref{cor_nonvanish}. Using this equality and \eqref{mabfI_lam} we conclude 
\begin{equation*}
	\mathbf{I}_\lambda=\frac{\int_{TM} w(y)\,\frac{\partial L}{\partial u}(y,v,0)\, \dd\tilde\mu^x_\lambda(y,v)}{\int_{TM} \frac{\partial L}{\partial u}(y,v,0)\, \dd\tilde\mu^x_\lambda(y,v) }.
\end{equation*}

To finish the proof it only remains to verify that $R_\lambda(x)\to$ as $\lambda\to 0$. Note that by Lemma \ref{Lip2} all calibrated curves $\galax$ have Lipschitz constant$\leq K$. We set $S=\{(x,v)\in TM \mid \lVert v\rVert_x\leq K\}$, then condition {\bf (L3)} provides a modulus of continuity $\eta_S$ such that
\[ \|\Omega_{\lambda,x}\|_\infty\leq \lambda\|u_\lambda\|_\infty \eta_S(\lambda\|u_\lambda\|_\infty).\]
This, combined with \eqref{integ_0infty}, yields
\begin{align*}
	|R_\lambda(x)|\leq \lambda\|u_\lambda\|_\infty \eta_S(\lambda\|u_\lambda\|_\infty) \int^0_{-\infty}e^{\lambda\int^0_t \frac{\partial L}{\partial u}(\galax(s),\dgalax(s),0)\, \dd s}\,\dd t
	\leq  (\lambda T_0+\epsilon^{-1})\, \|u_\lambda\|_\infty \, \eta_S(\lambda\|u_\lambda\|_\infty).
\end{align*}
As the modulus of continuity satisfies $\lim_{u\to 0^+} \eta_S(u)=0$ and the $u_\lambda$'s are equibounded, we immediately conclude that $\lim_{\lambda\to 0^+} R_\lambda(x) =0$. This completes the proof.
\end{proof}

We are now ready to finish the proof of Theorem \ref{main}:
\begin{proof}[Proof of Theorem \ref{main}]
Recalling \eqref{def_u0} we have defined a function $u_0:M\to \R$,
\begin{align*}
u_0(x)=\sup_{w\in \mathcal{S}_0} w(x),	
\end{align*}
where $\mathcal{S}_0$ denotes the set of subsolutions $w$ of equation \eqref{HJ000} such that $\int_{TM}w(y)\frac{\partial L}{\partial u}(y,v,0)\,\dd\tilde \mu(y,v)\geq0$, for all $\tilde\mu\in\Mis(L^0)$.

We will establish that for any sequence of viscosity solution $u_\lambda$ of \eqref{HJlambda}, it uniformly converges to $u_0$ as $\lambda\to 0^+$.
We have seen from Corollary \ref{cor_subpositive} that every possible  accumulation point of the sequence $u_{\lambda}$, as $\lambda\to 0^+$, is $\leq u_0$. If we prove the opposite inequality, the assertion follows.

Suppose that the sequence $\lambda_n\to 0^+$ and $u_{\lambda_n}$ converges uniformly to some function $v_0:M\to\R$. Let $x\in M$, and we consider a family of probability measures $\tilde\mu^x_{\lambda_n}$ given in \eqref{probmeasure_defnition}. By Proposition \ref{prop_relativecompact}, up to extracting a subsequence,  we may assume that $\tilde\mu^x_{\lambda_n}$ converges to a Mather measure $\tilde\mu$. Then for every subsolution $w\in \mathcal{S}_0$, we infer from Lemma \ref{lem_ugeqsubsol} that
\begin{equation}\label{finainequ}
v_0(x)\geq w(x)- \frac{\int_{TM} w(y)\,\frac{\partial L}{\partial u}(y,v,0)\, \dd\tilde\mu(y,v)}{\int_{TM} \frac{\partial L}{\partial u}(y,v,0)\, \dd\tilde\mu(y,v)}.
\end{equation}
Using condition {\bf (L4)} and the fact $w\in \mathcal{S}_0$ we find that 
\[\int_{TM} \frac{\partial L}{\partial u}(y,v,0)\, \dd\tilde\mu(y,v)<0,\qquad \int_{TM} w(y)\,\frac{\partial L}{\partial u}(y,v,0)\, \dd\tilde\mu(y,v)\geq 0,\] 
so \eqref{finainequ} implies that $v_0(x)\geq w(x)$. Since this inequality holds for all $w\in \mathcal{S}_0$, we conclude that $v_0\geq u_0$, which therefore completes the proof.  
\end{proof}

\subsection{An alternate formula for \texorpdfstring{$u_0$}{}}

We provide another formula for the limit solution $u_0$, which involves the Peierls barrier $h:M\times M\to \R$ of the Lagrangian $L^0$, as well as  Mather measures. This will prove our Theorem \ref{mythm_anotherformula}.

We start with the following lemma. 

\begin{lemme}\label{lm_onesi}
	For every $x\in M$ and $\tilde\mu\in  \Mis(L^0)$, we have 
	\begin{equation*}
		u_0(x)\leq \frac{1}{\int_{TM}\frac{\partial L}{\partial u}(y,v,0)\,\dd\tilde{\mu}(y,v)}\int_{TM}h(y,x)\frac{\partial L}{\partial u}(y,v,0)\,\dd\tilde{\mu}(y,v).
	\end{equation*}
\end{lemme}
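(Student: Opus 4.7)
The plan is to combine three facts: (i) $u_0$ is a viscosity subsolution of the critical equation \eqref{HJ000} (this was established in the proof of Theorem \ref{main}, since $u_0 = \sup_{w \in \mathcal{S}_0} w$); (ii) the Peierls barrier dominates subsolution differences by Proposition \ref{prop h}(b); and (iii) by construction $u_0 \in \mathcal{S}_0$, so it satisfies $\int_{TM} u_0(y)\,\frac{\partial L}{\partial u}(y,v,0)\,\dd\tilde{\mu}(y,v) \geq 0$ for every Mather measure $\tilde{\mu} \in \Mis(L^0)$.

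First I would apply Proposition \ref{prop h}(b) to $u_0$ to obtain the pointwise estimate
\begin{equation*}
u_0(x) - u_0(y) \leq h(y,x), \qquad \text{for every } y \in M.
\end{equation*}
Next I would multiply this inequality by $\frac{\partial L}{\partial u}(y,v,0)$. The key observation here is that by condition {\bf(L0)} (monotonicity of $L$ in $u$, non-increasing) together with {\bf(L3)} (differentiability at $0$), we have $\frac{\partial L}{\partial u}(y,v,0) \leq 0$ everywhere on $TM$. Multiplying by a non-positive quantity reverses the inequality:
\begin{equation*}
\bigl(u_0(x) - u_0(y)\bigr)\frac{\partial L}{\partial u}(y,v,0) \geq h(y,x)\,\frac{\partial L}{\partial u}(y,v,0).
\end{equation*}

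I would then integrate both sides against an arbitrary $\tilde{\mu} \in \Mis(L^0)$. Since $u_0(x)$ is a constant with respect to the integration variable, this yields
\begin{equation*}
u_0(x) \int_{TM}\frac{\partial L}{\partial u}(y,v,0)\,\dd\tilde{\mu}(y,v) - \int_{TM} u_0(y)\,\frac{\partial L}{\partial u}(y,v,0)\,\dd\tilde{\mu}(y,v) \geq \int_{TM} h(y,x)\,\frac{\partial L}{\partial u}(y,v,0)\,\dd\tilde{\mu}(y,v).
\end{equation*}
Since $u_0 \in \mathcal{S}_0$, the second integral on the left is $\geq 0$, so dropping it preserves the inequality:
\begin{equation*}
u_0(x) \int_{TM}\frac{\partial L}{\partial u}(y,v,0)\,\dd\tilde{\mu}(y,v) \geq \int_{TM} h(y,x)\,\frac{\partial L}{\partial u}(y,v,0)\,\dd\tilde{\mu}(y,v).
\end{equation*}

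Finally I would divide by $\int_{TM}\frac{\partial L}{\partial u}(y,v,0)\,\dd\tilde{\mu}(y,v)$, which is \emph{strictly negative} by condition {\bf(L4)}; this reverses the inequality one more time and gives exactly the claim. The only subtle step is the bookkeeping of two sign flips (one from multiplying by a non-positive derivative, one from dividing by a strictly negative integral), but the argument is essentially a direct application of the defining inequality of $u_0 \in \mathcal{S}_0$ combined with the Peierls barrier bound.
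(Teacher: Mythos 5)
Your argument follows the paper's proof almost line by line: the Peierls barrier inequality $u_0(x)\leq u_0(y)+h(y,x)$ from Proposition \ref{prop h}, multiplication by the non-positive weight $\frac{\partial L}{\partial u}(y,v,0)$, integration against a Mather measure, dropping the term $\int_{TM}u_0(y)\frac{\partial L}{\partial u}(y,v,0)\,\dd\tilde\mu\geq 0$, and division by the strictly negative integral furnished by {\bf(L4)}. The sign bookkeeping is correct.

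The one step you have not actually justified is the assertion that $u_0\in\mathcal{S}_0$ ``by construction.'' Being a pointwise supremum of subsolutions, $u_0$ is indeed a subsolution (Proposition \ref{subsol_properties}), but the integral constraint does \emph{not} pass to the supremum: since the weight $\frac{\partial L}{\partial u}(\cdot,\cdot,0)$ is non-positive, $u_0\geq w$ only gives $\int_{TM}u_0\,\frac{\partial L}{\partial u}(y,v,0)\,\dd\tilde\mu\leq\int_{TM}w\,\frac{\partial L}{\partial u}(y,v,0)\,\dd\tilde\mu$, i.e.\ enlarging the function can only decrease the integral, so membership of each $w\in\mathcal{S}_0$ does not by itself put $u_0$ in $\mathcal{S}_0$. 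The fact is nevertheless true, and the paper obtains it the only way available at this point: by Theorem \ref{main}, $u_0$ is the uniform limit of the solutions $u_\lambda$, and Corollary \ref{cor_subpositive} then yields $\int_{TM}u_0(y)\frac{\partial L}{\partial u}(y,v,0)\,\dd\tilde\mu(y,v)\geq 0$ for every $\tilde\mu\in\Mis(L^0)$. Replace ``by construction'' with this appeal to Corollary \ref{cor_subpositive} (via Theorem \ref{main}) and your proof coincides with the paper's.
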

\begin{proof}
Let $x\in M$. Since $u_0$ is a viscosity solution of $H^0(x, d_x u)=c(H^0)$, by Proposition \ref{prop h} we get $u_0(x)\leq u_0(y)+h(y,x)$ for all $y\in M$. Multiplying this inequality by $\partial L/\partial u(y,v,0)$ which is non-positive and integrating with respect to a Mather measure $\tilde\mu\in \Mis(L^0)$,  we derive that
\begin{align*}
	u_0(x)   \int_{TM} \frac{\partial L}{\partial u}(y,v,0) \,\dd\tilde{\mu}(y,v) & \geq \int_{TM} u_0(y) \frac{\partial L}{\partial u}(y,v,0)\,\dd\tilde{\mu}(y,v)+\int_{TM} h(y,x) \frac{\partial L}{\partial u}(y,v,0) \,\dd\tilde{\mu}(y,v)\\
	&\geq  \int_{TM} h(y,x) \frac{\partial L}{\partial u}(y,v,0) \,\dd\tilde{\mu}(y,v)
\end{align*}
where the last line follows from Corollary \ref{cor_subpositive}. By condition {\bf(L4)},  $\int_{TM} \frac{\partial L}{\partial u}(y,v,0) \,\dd\tilde{\mu}<0$, so the assertion follows.
\end{proof}

We define a function $\hat u_0: M\to \R$ by
\begin{equation*}
	\hat u_0(x):=\inf_{\tilde{\mu}\in \Mis(L^0) }\frac{\int_{TM}h(y,x)\frac{\partial L}{\partial u}(y,v,0)\,\dd\tilde{\mu}(y,v)}{\int_{TM}\frac{\partial L}{\partial u}(y,v,0)\,\dd\tilde{\mu}(y,v)}.
\end{equation*}
Then we will prove the following equality. 
\begin{thm}
	For every $x\in M$,  $u_0(x)=\hat u_0(x)$.
\end{thm}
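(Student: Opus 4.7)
The inequality $u_0\leq\hat u_0$ on $M$ is already established by Lemma \ref{lm_onesi}. For the reverse inequality, the plan is to prove that $\hat u_0$ is a viscosity subsolution of the critical equation \eqref{HJ000} and that $\hat u_0(x)\leq u_0(x)$ for every $x\in\mathcal{A}_{L^0}$. Since $u_0$ is a viscosity solution (hence a supersolution), Proposition \ref{Aubry_uniqueness} will then upgrade this to $\hat u_0\leq u_0$ on all of $M$, so that combining with Lemma \ref{lm_onesi} yields the claimed equality $u_0=\hat u_0$.

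\textbf{Step 1: $\hat u_0$ is a viscosity subsolution.} For each fixed $\tilde\mu\in\Mis(L^0)$, we rewrite the quantity inside the infimum as
\[\phi_{\tilde\mu}(x):=\frac{\int_{TM} h(y,x)\,\frac{\partial L}{\partial u}(y,v,0)\,\dd\tilde\mu(y,v)}{\int_{TM}\frac{\partial L}{\partial u}(y,v,0)\,\dd\tilde\mu(y,v)}=\int_{TM} h(y,x)\,\dd\rho_{\tilde\mu}(y,v),\]
where $\rho_{\tilde\mu}$ is the Borel probability measure on $TM$ obtained by normalizing the non-positive density $(y,v)\mapsto\partial L/\partial u(y,v,0)$ against $\tilde\mu$; this is well defined as a probability measure thanks to condition {\bf(L4)}. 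Each $h(y,\cdot)$ is a viscosity solution of \eqref{HJ000} by Proposition \ref{prop h}(c), and therefore satisfies the Lagrangian domination inequality of Theorem \ref{calibrationThm}(1). Integrating that pointwise inequality against $\rho_{\tilde\mu}$ shows that $\phi_{\tilde\mu}$ also satisfies domination, hence is a viscosity subsolution of \eqref{HJ000}. Since all $\phi_{\tilde\mu}$ are subsolutions of the same equation, the family $\{\phi_{\tilde\mu}\}_{\tilde\mu\in\Mis(L^0)}$ is equi-Lipschitz by Lemma \ref{Lip0}, and Proposition \ref{subsol_properties}(2) then gives that $\hat u_0=\inf_{\tilde\mu}\phi_{\tilde\mu}$ is a viscosity subsolution.

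\textbf{Step 2: $u_0\geq\hat u_0$ on the projected Aubry set.} The proof of Theorem \ref{main} in fact yields more than its statement: for each fixed $x\in M$, after extracting a subsequence $\lambda_n\to 0^+$ along which the probability measures $\tilde\mu^x_{\lambda_n}$ of \eqref{probmeasure_defnition} converge weakly to some Mather measure $\tilde\mu_x\in\Mis(L^0)$ (possible by Proposition \ref{prop_relativecompact}), passing to the limit in Lemma \ref{lem_ugeqsubsol} (using $R_{\lambda_n}(x)\to 0$) produces, for \emph{every} viscosity subsolution $w$ of \eqref{HJ000}, the inequality
\[u_0(x)\geq w(x)-\frac{\int_{TM} w(y)\frac{\partial L}{\partial u}(y,v,0)\,\dd\tilde\mu_x(y,v)}{\int_{TM}\frac{\partial L}{\partial u}(y,v,0)\,\dd\tilde\mu_x(y,v)}.\]
Apply this with the particular subsolution $w(y):=-h(y,x)$ provided by Proposition \ref{prop h}(c); since $w(x)=-h(x,x)$, this reads
\[u_0(x)+h(x,x)\geq\frac{\int_{TM} h(y,x)\frac{\partial L}{\partial u}(y,v,0)\,\dd\tilde\mu_x(y,v)}{\int_{TM}\frac{\partial L}{\partial u}(y,v,0)\,\dd\tilde\mu_x(y,v)}\geq\hat u_0(x),\]
the last step being just the definition of $\hat u_0$ as an infimum. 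When $x\in\mathcal{A}_{L^0}$ one has $h(x,x)=0$, so $u_0(x)\geq\hat u_0(x)$ on the projected Aubry set, as desired.

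\textbf{Main obstacle and conclusion.} The delicate point is Step 1: passing from a finite convex combination (handled by Proposition \ref{subsol_properties}(3)) to an integral against a general Borel probability measure on $TM$. This is dealt with cleanly by the Lagrangian domination characterization of Theorem \ref{calibrationThm}(1), whose pointwise inequality is stable under integration against any probability measure by Fubini. With Steps 1 and 2 in hand, Proposition \ref{Aubry_uniqueness} applied to the subsolution $\hat u_0$ and the solution $u_0$ gives $\hat u_0\leq u_0$ on all of $M$; together with Lemma \ref{lm_onesi} this completes the proof of $u_0=\hat u_0$.
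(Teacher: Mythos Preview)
Your proof is correct and shares the paper's overall architecture: show $\hat u_0$ is a critical subsolution, show $u_0\geq\hat u_0$ on the projected Aubry set, and conclude via Proposition~\ref{Aubry_uniqueness}. Your Step~1 is actually a bit more careful than the paper's, which simply invokes Proposition~\ref{subsol_properties}(3) for the ``convex combination'' $h_{\tilde\mu}$; your appeal to the domination characterization of Theorem~\ref{calibrationThm}(1), integrated against the normalized measure $\rho_{\tilde\mu}$, makes this passage to a continuous convex combination transparent.

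Where you genuinely diverge is Step~2. The paper does \emph{not} revisit the limiting inequality~\eqref{finainequ}; instead it works directly with the defining property of $u_0$ as $\sup_{\mathcal S_0}$. For each $y\in M$ it sets $U_y(x):=-h(x,y)+\hat u_0(y)$, checks (using nothing more than the definition of $\hat u_0$ as an infimum over $\Mis(L^0)$) that $U_y\in\mathcal S_0$, and deduces $u_0\geq U_y$ everywhere; evaluating at $y\in\mathcal A_{L^0}$ gives $u_0(y)\geq\hat u_0(y)$. Your route instead re-opens the proof of Theorem~\ref{main}, extracts the Mather measure $\tilde\mu_x$ from the calibrated curves at $x$, and feeds the subsolution $w=-h(\cdot,x)$ into the limit of Lemma~\ref{lem_ugeqsubsol}. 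Both arguments land on the same inequality on $\mathcal A_{L^0}$; the paper's is more self-contained (it uses only the variational characterization of $u_0$), while yours has the merit of showing that the same limiting mechanism behind the convergence theorem already encodes the Peierls-barrier formula.
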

\begin{proof}
	By Lemma \ref{lm_onesi}, we clearly have $u_0\leq\hat u_0$, so it remains to show that $u_0\geq\hat u_0$.

	We first claim that $\hat u_0$ is a viscosity subsolution of $H^0(x, d_xu)=c(H^0)$. Indeed, by Proposition \ref{prop h}, for each $y$, the function $x\mapsto h(y,x)$ is a viscosity solution of $H^0(x, d_xu)=c(H^0)$. For each $\tilde\mu \in \Mis(L^0)$, we define  $h_{\tilde\mu}:M\to \R$ by 
	\[h_{\tilde\mu}(x):=\frac{\int_{TM}h(y,x)\frac{\partial L}{\partial u}(y,v,0)\,\dd\tilde{\mu}(y,v)}{\int_{TM}\frac{\partial L}{\partial u}(y,v,0)\,\dd\tilde{\mu}(y,v)}. \]
	By Proposition \ref{subsol_properties}, $h_{\tilde\mu}$ is a subsolution of $H^0(x, d_xu)=c(H^0)$ since it is a convex combination of equi-Lipschitz viscosity solutions. Then, using Proposition \ref{subsol_properties} again,  we find that $\hat u_0$ is a subsolution since it is the infimum of a family of equi-Lipschitz subsolutions. Thus, the claim follows.
	
	For each $y\in M$, the function $-h(\cdot, y)$ is a subsolution of $H(x, d_xu)=c(H^0)$, see Proposition \ref{prop h}. We then define $U_y: M\to \R$ by
	\[U_y(x):=-h(x,y)+\hat u_0(y).\]
 Obviously, $U_y$ is a subsolution and we can check that
 \begin{equation*}
 	\frac{\int_{TM}U_y(x)\frac{\partial L}{\partial u}(x,v,0)\,\dd\tilde{\nu}(x,v)}{\int_{TM}\frac{\partial L}{\partial u}(x,v,0)\,\dd\tilde{\nu}(x,v)}\leq 0,\quad\textup{for all $\tilde\nu\in \Mis(L^0)$,}
 \end{equation*}
 which implies that $U_y$ satisfies condition \eqref{strcodn}. Thus, $u_0\geq U_y$ everywhere. In particular, for $y$ in the projected Aubry set $\mathcal{A}_{L^0}$, we have
	$u_0(y)\geq U_y(y)=\hat u_0(y)$ since $h(y,y)=0$.  We can conclude from 
	Proposition \ref{Aubry_uniqueness} that the inequality $u_0\geq \hat u_0$ holds on $M$. This finishes the proof.  
\end{proof}

 %%%%%%%%%%%%%%%%%%%%%%%%%%%%%%%%%%%%%%%%%%%%%%%%%%%%%
\section{Global and Local uniqueness of the discounted solutions}\label{section_uniqueness}
%%%%%%%%%%%%%%%%%%%%%%%%%%%%%%%%%%%%%%%%%%%%%%%%%%%%%
In this section we provide a comparison result for equation \eqref{HJlambda} with $\lambda>0$. Under the hypotheses {\bf(L0)}--{\bf(L4)} it is not clear that we have uniqueness of solutions of equation \eqref{HJlambda} for each $\lambda>0$.

To be able to give a uniqueness property  for all $\lambda>0$, we will use condition {\bf(L5)}, that is
 \begin{itemize}[itemindent=1em]
 \item[\bf(L5)] For every $(x,v)\in TM$, the function $u\in \R  \mapsto L(x,v,u)$  is concave\footnote{This condition is equivalent to: for every $(x,p)\in T^*M$, the function $ u\in \R \mapsto H(x,p,u)$  is convex.}.
  \end{itemize}
  \begin{thm}[Global uniqueness] Suppose that the Lagrangian  $L:TM\times \R\to\R$ satisfies conditions {\bf (L0)}, {\bf(L1)}, {\bf(L2)}, {\bf(L3)}, {\bf (L4)} and {\bf(L5)}. Then for every $\lambda>0$  the Hamilton--Jacobi equation
\eqref{HJlambda} has a unique viscosity solution. 
\end{thm}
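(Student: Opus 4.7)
The plan is to apply Lemma \ref{Gen-equibounded} to any two viscosity solutions $u_\lambda,v_\lambda$ of \eqref{HJlambda}. By Proposition \ref{uLambdaKamFaible}, both functions solve critical Hamilton--Jacobi equations with Hamiltonians $\mathbf{H}^{\lambda u_\lambda}$ and $\mathbf{H}^{\lambda v_\lambda}$, so Lemma \ref{Gen-equibounded} applies with $u=u_\lambda$, $w=v_\lambda$, $\varphi=\lambda u_\lambda$, $\psi=\lambda v_\lambda$ and $c=c(H^0)$. The dichotomy yields either (1) $\max_M(u_\lambda-v_\lambda)$ is attained at a point where $\lambda u_\lambda\leq \lambda v_\lambda$, in which case $\max(u_\lambda-v_\lambda)\leq 0$ and thus $u_\lambda\leq v_\lambda$ on $M$; or (2) a closed probability measure $\tilde\mu$ on $TM$, with compact support, such that $\lambda u_\lambda>\lambda v_\lambda$ on $\supp\tilde\mu$ and
\[
\int_{TM}\bigl[L(x,v,\lambda u_\lambda(x))+c(H^0)\bigr]\,\dd\tilde\mu=\int_{TM}\bigl[L(x,v,\lambda v_\lambda(x))+c(H^0)\bigr]\,\dd\tilde\mu=0.
\]

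The crux is to rule out case (2) by combining \textbf{(L0)}, \textbf{(L5)}, \textbf{(L3)} and \textbf{(L4)}. Since $\lambda u_\lambda(x)>\lambda v_\lambda(x)$ on $\supp\tilde\mu$, condition \textbf{(L0)} forces $L(x,v,\lambda u_\lambda(x))\leq L(x,v,\lambda v_\lambda(x))$ pointwise there; the equality of the integrals then upgrades this to pointwise equality on $\supp\tilde\mu$ by continuity of $L$. Now fix $(x,v)\in\supp\tilde\mu$ and consider the real function $r\mapsto L(x,v,r)$, which is concave by \textbf{(L5)} and non-increasing by \textbf{(L0)}, and takes the same value at the two distinct points $\lambda v_\lambda(x)<\lambda u_\lambda(x)$. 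A short convex-combination argument shows that a concave non-increasing function on $\R$ with coinciding values at two distinct points must be constant on all of $\R$. Consequently $L(x,v,r)=L(x,v,0)$ for every $r\in\R$ and every $(x,v)\in\supp\tilde\mu$.

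Two consequences follow. First, $\int_{TM}[L(x,v,0)+c(H^0)]\,\dd\tilde\mu=\int_{TM}[L(x,v,\lambda v_\lambda(x))+c(H^0)]\,\dd\tilde\mu=0$, so by Proposition \ref{Propertiesclosedmeas} the closed measure $\tilde\mu$ is actually a Mather measure for $L^0$, that is $\tilde\mu\in\Mis(L^0)$. Second, since $r\mapsto L(x,v,r)$ is constant on $\R$ for $(x,v)\in\supp\tilde\mu$, the derivative supplied by \textbf{(L3)} vanishes on the support:
\[
\frac{\partial L}{\partial u}(x,v,0)=0\quad\text{for all }(x,v)\in\supp\tilde\mu,
\]
hence $\int_{TM}\frac{\partial L}{\partial u}(x,v,0)\,\dd\tilde\mu=0$. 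This directly contradicts condition \textbf{(L4)}, which requires the integral to be strictly negative for every Mather measure of $L^0$.

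Therefore alternative (2) cannot occur, and $u_\lambda\leq v_\lambda$ on $M$. Exchanging the roles of $u_\lambda$ and $v_\lambda$ gives the reverse inequality, and uniqueness follows. The main technical subtlety is the rigidity step: extracting from the mere equality of two averaged Lagrangian values, together with \textbf{(L0)} and \textbf{(L5)}, the much stronger pointwise assertion that $L(x,v,\cdot)$ is constant on $\R$ throughout $\supp\tilde\mu$. Once this is secured, identifying $\tilde\mu$ as a Mather measure of $L^0$ via Proposition \ref{Propertiesclosedmeas} and invoking \textbf{(L4)} is routine.
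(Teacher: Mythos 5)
Your overall strategy is the same as the paper's (apply Lemma \ref{Gen-equibounded} to two solutions and rule out alternative (2) using {\bf(L0)}, {\bf(L5)}, {\bf(L3)}, {\bf(L4)}), but the key rigidity step is wrong as stated. It is \emph{not} true that a concave, non-increasing function on $\R$ taking equal values at two distinct points must be constant on all of $\R$: the function $r\mapsto \min(0,\,b-r)$ is concave and non-increasing, equals $0$ at any two points $\leq b$, yet is not constant. The correct conclusion from {\bf(L0)}, {\bf(L5)} and $L(x,v,\lambda v_\lambda(x))=L(x,v,\lambda u_\lambda(x))$ with $\lambda v_\lambda(x)<\lambda u_\lambda(x)$ is only that $r\mapsto L(x,v,r)$ is constant on the half-line $\left]-\infty,\lambda u_\lambda(x)\right]$ (constancy between the two points forces the concave function's maximum to be attained there, and monotonicity then gives constancy below). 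This matters because your two consequences both require evaluating $L$ at and near $r=0$: you need $L(x,v,0)=L(x,v,\lambda u_\lambda(x))$ on $\supp\tilde\mu$ to identify $\tilde\mu$ as a Mather measure of $L^0$, and you need constancy of $L(x,v,\cdot)$ on a one-sided neighborhood of $0$ to get $\frac{\partial L}{\partial u}(x,v,0)=0$. With the correct half-line rigidity this is justified only where $\lambda u_\lambda(x)\geq 0$, and nothing in alternative (2) prevents $u_\lambda$ from being negative on part (or all) of $\supp\tilde\mu$; only the difference $u_\lambda-v_\lambda$ has a known sign there.

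The gap is repairable, and the paper's proof shows how: fix $u_0<0$ with $u_0\leq\min_M\lambda u_\lambda$. The half-line constancy gives $L(x,v,u_0)=L(x,v,\lambda u_\lambda(x))$ on $\supp\tilde\mu$, hence $\int_{TM}\big[L(x,v,u_0)+c(H^0)\big]\,\dd\tilde\mu=0$. Since $u_0<0$, condition {\bf(L0)} gives $L(x,v,u_0)\geq L(x,v,0)$, so $0\geq\int_{TM}\big[L(x,v,0)+c(H^0)\big]\,\dd\tilde\mu$, while the closed-measure inequality (Proposition \ref{Propertiesclosedmeas}) gives the reverse; hence $\tilde\mu\in\Mis(L^0)$ and, by continuity, $L(x,v,u)=L(x,v,0)$ for all $u\in[u_0,0]$ and $(x,v)\in\supp\tilde\mu$. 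Since $u_0<0$ and the $u$-derivative at $0$ exists by {\bf(L3)}, this yields $\frac{\partial L}{\partial u}(x,v,0)=0$ on $\supp\tilde\mu$, contradicting {\bf(L4)}. As written, your proof skips exactly this bridge from $\lambda u_\lambda(x)$ down to $0$, and the false "constant on all of $\R$" claim is what papers over it.
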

\begin{proof} It suffices to show that $\max_M(\tilde u_\lambda - u_\lambda)\leq 0 $ for any pair $\tilde u_\lambda, u_\lambda $ of 
 solutions of \eqref{HJlambda} with $\lambda>0$. We apply Lemma \ref{Gen-equibounded}, with $\varphi=\lambda \tilde u_\lambda,
\psi=\lambda u_\lambda, u=\tilde u_\lambda$ and $w=u_\lambda$. 

 If alternative (1) of Lemma 
\ref{Gen-equibounded} holds, then $\max_M(\tilde u_\lambda - u_\lambda)$ can be attained at a point where
$\lambda\tilde u_\lambda -\lambda u_\lambda$ is $\leq 0$. Since $\lambda>0$, we indeed have
$\max_M(\tilde u_\lambda - u_\lambda)\leq 0 $.

It remains to show that alternative (2) of Lemma \ref{Gen-equibounded} leads to a contradiction. Assume alternative (2) of  Lemma 
\ref{Gen-equibounded} happens, then we can find
a closed measure $\tilde \mu$ on $TM$, whose support $\supp \tilde\mu$ 
is compact, such that
\begin{equation}\label{811}\int_{TM}\big[ L(x,v, \lambda\tilde u_\lambda(x))+c({H^0}) ] \,\dd\tilde\mu(x,v)=\int_{TM} \big[ L(x,v, \lambda u_\lambda(x))+c(H^0)\big] \,\dd\tilde\mu(x,v)=0\end{equation}
and
$$\tilde u_\lambda(x)-u_\lambda(x)=\max_M(\tilde u_\lambda-u_\lambda) \ \ , \lambda\tilde u_\lambda(x)-\lambda u_\lambda(x)>0\ \ ,\text{ for all $(x,v)\in \supp \tilde\mu$.}$$
Since $\lambda>0$, we get $\tilde u_\lambda(x)>u_\lambda(x)$ whenever 
$(x,v)\in \supp \tilde\mu$. Using condition {\bf(L0)}, i.e. $L$ is non-increasing in $u$, we infer from \eqref{811} that
$$L(x,v,r)=L(x,v,\tilde u_\lambda(x)),\text{  for  all $(x,v)\in \supp \tilde\mu$,  
$r\in [\lambda u_\lambda(x),\lambda \tilde u_\lambda(x)]$.}$$
As the interval $ [\lambda u_\lambda(x),\lambda \tilde u_\lambda(x)]$ has non-empty interior, for all $(x,v)\in \supp \tilde\mu$, the concavity condition {\bf(L5)} implies that 
$$\max_{r\in \R}L(x,v,r)=L(x,v,\lambda\tilde u_\lambda(x)),\text{ for all $(x,v)\in \supp\tilde\mu$.}$$
Taken together with condition {\bf(L0)} this last condition implies that, for all $(x,v)\in \supp\tilde\mu$, 
we have
\begin{equation}\label{822}L(x,v,r)=L(x,v,\lambda\tilde u_\lambda(x)), \text{ for all $r\leq \lambda\tilde u_\lambda(x)$.}\end{equation}
We now fix $u_0<0$, such that $u_0\leq \min_M\lambda \tilde u_\lambda$. From \eqref{811} and
\eqref{822}, we obtain that
$$\int_{TM}\big[ L(x,v, u_0)+c({H^0})\big]\,\dd\tilde\mu(x,v)=\int_{TM} \big[L(x,v, \lambda\tilde u_\lambda(x))+c({H^0})\big] \,\dd\tilde\mu(x,v)=0.$$
Since $u_0<0$, condition {\bf(L0)} implies
$$0=\int_{TM} \big[L(x,v, u_0)+c({H^0})\big]\,\dd\tilde\mu(x,v)\geq  \int_{TM} \big[L(x,v, 0)+c({H^0}) \big] \,\dd\tilde\mu(x,v).$$
But since $\tilde\mu$ is a closed measure, we must have $ \int_{TM} \big[ L(x,v, 0)+c({H^0})\big] \,\dd\tilde\mu(x,v)\geq 0$. Hence
\begin{equation}\label{833}
\int_{TM} \big[ L(x,v, u_0)+c({H^0})\big] \,\dd\tilde\mu(x,v)= \int_{TM} \big[ L(x,v, 0)+c({H^0})\big]\,\dd\tilde\mu(x,v)=0.
\end{equation}
The second equality above implies that $\tilde\mu$ is a Mather measure for
$L^0$. Using again {\bf(L0)} together with $u_0<0$, the first equality of \eqref{833} implies that, 
$$L(x,v,u)=L(x,v,0), \text{ for all $(x,v)\in \supp \tilde\mu$, $u\in [u_0,0]$.}$$
Since $u_0<0$, we conclude that
$$\frac{\partial L}{\partial u}(x,v,0)=0, \text{ for all $(x,v)\in \supp \tilde\mu$,}$$
which contradicts condition {\bf(L4)}, because $\tilde\mu$ is a Mather measure for $L^0$. This finishes the proof.
\end{proof}

If we drop condition {\bf(L5)}, we can still prove uniqueness of solutions of 
\eqref{HJlambda} for small $\lambda>0$, provided that we strengthen condition {\bf(L3)} 
to condition {\bf(L3$'$)}, namely:
\begin{itemize}[itemindent=1em]
 \item[\bf(L3$'$)] For every $(x,v,u)\in TM\times \R$, the partial derivative $\partial L/\partial u(x,v,u)$ exists
 and is continuous on $TM\times \R$.
 \end{itemize}

 We will need the following lemma: 
\begin{lemme}\label{essentiel2} Suppose that the Lagrangian  $L:TM\times \R\to\R$  satisfies conditions {\bf (L0)}, {\bf(L1)}, {\bf(L2)}, {\bf(L3$'$)} and {\bf(L4)}. 
We can find  $\epsilon>0, \lambda_1>0$ and $T_0>0$ such that
any curve $\gamma:[a,b]\to M$, with $b-a\geq T_0$, which is $u_\lambda$-calibrated, where $u_\lambda$
is a solution of \eqref{HJlambda} and  $0<\lambda\leq\lambda_1$,
 satisfies
$$\int_{a}^{b} \frac{\partial L}{\partial u}(\gamma(s),\dot\gamma(s),\lambda u_\lambda(\gamma(s)))\,\dd s\leq -(b-a)\epsilon.$$
\end{lemme}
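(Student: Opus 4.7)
\textbf{Proof proposal for Lemma \ref{essentiel2}.} The strategy is to reduce to Lemma \ref{essentiel} and then use a uniform continuity argument, made possible by the stronger regularity in condition {\bf(L3$'$)}.

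First, by condition {\bf(L4)} together with the compactness of $\Mis(L^0)$ for the weak topology and the continuity of the integrand (which is continuous by {\bf(L3$'$)} and bounded on the Mather set, which is compact by Proposition \ref{PropertiesMather}), I obtain constants $\epsilon_0, \epsilon_0' > 0$ with $\epsilon_0 < \epsilon_0'$ such that
\begin{equation*}
-\epsilon_0' \;<\; \inf_{\tilde\mu\in\Mis(L^0)}\int_{TM}\frac{\partial L}{\partial u}(x,v,0)\,\dd\tilde\mu \;\leq\; \sup_{\tilde\mu\in\Mis(L^0)}\int_{TM}\frac{\partial L}{\partial u}(x,v,0)\,\dd\tilde\mu \;<\; -2\epsilon_0.
\end{equation*}
Applying Lemma \ref{essentiel} with $c = -2\epsilon_0$ and $c' = -\epsilon_0'$ furnishes constants $\lambda_0 > 0$ and $T_0 > 0$ such that every $u_\lambda$-calibrated curve $\gamma:[a,b] \to M$ (for some viscosity solution $u_\lambda$ of \eqref{HJlambda} with $0 < \lambda \leq \lambda_0$) of length $b-a \geq T_0$ satisfies
\begin{equation*}
\frac{1}{b-a}\int_a^b \frac{\partial L}{\partial u}(\gamma(s),\dot\gamma(s),0)\,\dd s \;\leq\; -2\epsilon_0.
\end{equation*}

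Next, I control the discrepancy between $\partial L/\partial u$ evaluated at $0$ and at $\lambda u_\lambda(\gamma(s))$. By Theorem \ref{equibounded}, there is a constant $\mathfrak{C}$ such that $\|u_\lambda\|_\infty \leq \mathfrak{C}$ for every solution $u_\lambda$ of \eqref{HJlambda} with $\lambda > 0$. By Lemma \ref{Lip2}, every $u_\lambda$-calibrated curve with $0 < \lambda \leq \lambda_0$ has Lipschitz constant at most $K = K(\lambda_0)$. Therefore, the points $(\gamma(s),\dot\gamma(s),\lambda u_\lambda(\gamma(s)))$ all lie in the compact set
\begin{equation*}
S \times [-\lambda_0\mathfrak{C},\lambda_0\mathfrak{C}] \quad \text{where} \quad S = \{(x,v)\in TM \mid \|v\|_x \leq K\}.
\end{equation*}
By condition {\bf(L3$'$)}, $\partial L/\partial u$ is continuous on $TM\times \R$, hence uniformly continuous on the compact set $S \times [-\lambda_0\mathfrak{C},\lambda_0\mathfrak{C}]$. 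Thus there exists $\lambda_1 \in (0,\lambda_0]$ such that for every $\lambda \in (0,\lambda_1]$, every $(x,v) \in S$, and every $r \in [-\lambda \mathfrak{C}, \lambda \mathfrak{C}]$:
\begin{equation*}
\left|\frac{\partial L}{\partial u}(x,v,r) - \frac{\partial L}{\partial u}(x,v,0)\right| \;\leq\; \epsilon_0.
\end{equation*}

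Combining the two estimates, for any $u_\lambda$-calibrated curve $\gamma:[a,b] \to M$ with $0 < \lambda \leq \lambda_1$ and $b - a \geq T_0$, since $|\lambda u_\lambda(\gamma(s))| \leq \lambda\mathfrak{C}$ and $(\gamma(s),\dot\gamma(s)) \in S$ for almost every $s$, I obtain
\begin{equation*}
\int_a^b \frac{\partial L}{\partial u}(\gamma(s),\dot\gamma(s),\lambda u_\lambda(\gamma(s)))\,\dd s \;\leq\; \int_a^b \frac{\partial L}{\partial u}(\gamma(s),\dot\gamma(s),0)\,\dd s + (b-a)\epsilon_0 \;\leq\; -(b-a)\epsilon_0.
\end{equation*}
Taking $\epsilon = \epsilon_0$ concludes the proof. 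The only real work is in the first paragraph's appeal to Lemma \ref{essentiel}; the rest is a direct uniform continuity argument enabled by the joint continuity of $\partial L/\partial u$ from {\bf(L3$'$)}, together with the a priori bounds on solutions and calibrated curves established earlier.
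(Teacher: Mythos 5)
Your proof is correct and follows essentially the same route as the paper's: apply Lemma \ref{essentiel} with a margin (getting the bound $-2(b-a)\epsilon_0$ for the integral at $u=0$), then use the equiboundedness of the $u_\lambda$'s, the equi-Lipschitz bound on calibrated curves, and the continuity of $\partial L/\partial u$ from {\bf(L3$'$)} on a compact set to absorb the discrepancy between $\partial L/\partial u(\cdot,\cdot,\lambda u_\lambda)$ and $\partial L/\partial u(\cdot,\cdot,0)$ into $(b-a)\epsilon_0$. The only cosmetic difference is that the paper phrases the continuity step via a $\delta$ with $\lambda_1=\min(\lambda_0,\delta/\mathfrak{C})$ rather than your uniform-continuity formulation, which is the same argument.
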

\begin{proof} 
From Theorem \ref{equibounded}, we can find ${\mathfrak C}$ a common bound for all $\lVert u_\lambda\rVert_\infty$. 

By Lemma \ref{Lip2}, we can find a constant  $K$ such that any curve 
$\gamma:[a,b]\to M$, which is $u_\lambda$-calibrated, with  $u_\lambda$
 a solution of \eqref{HJlambda} and $0<\lambda\leq 1$,
has Lipschitz constant $\leq K$.

By Lemma \ref{essentiel}, we can find  $\epsilon>0,\lambda_0>0$ and $T_0>0$ such that any curve 
$\gamma:[a,b]\to M$, which is $u_\lambda$-calibrated for ${\bf L}^{\lambda u_\lambda}+c(H^0)$, with  $u_\lambda$
 a solution of \eqref{HJlambda} and $0<\lambda\leq \lambda_0$, satisfies
\begin{equation}\label{ZuZu1}
\int_{a}^{b} \frac{\partial L}{\partial u}(\gamma(s),\dot\gamma(s),0)\,\dd s\leq -2(b-a)\epsilon.
 \end{equation}
 By the continuity of $\partial L/\partial u(x,v,u)$ on the compact set 
 $\{(x,v,u)\mid \lVert v\rVert_x\leq K, \lvert u\rvert\leq 1\}$, we can find a constant $\delta>0$ such that
\begin{equation}\label{ZuZu2}
\left\lvert \frac{\partial L}{\partial u}(x,v,u)-
 \frac{\partial L}{\partial u}(x,v,0)\right\rvert\leq \epsilon,
\end{equation}
for all $(x,v,u)\in TM\times \R$ with $\lVert v\rVert_x\leq K$ and $ \lvert u\rvert\leq \delta$.

If $\gamma:[a,b]\to M$, which is $u_\lambda$-calibrated curve, with  $u_\lambda$
 a solution of \eqref{HJlambda} and $0<\lambda\leq \min(\lambda_0,\delta/{\mathfrak C})=\lambda_1$, then $u_\lambda$
has Lipschitz constant $\leq K$ and $\lambda\lVert u\rVert_\infty\leq \delta$. Moreover, by \eqref{ZuZu1}
$$ \int_{a}^{b} \frac{\partial L}{\partial u}(\gamma(s),\dot\gamma(s),0)\,\dd s\leq -2(b-a)\epsilon,$$
and by \eqref{ZuZu1}
$$\left\lvert \frac{\partial L}{\partial u}(\gamma(s),\dot\gamma(s),\lambda u_\lambda(\gamma(s)))-
 \frac{\partial L}{\partial u}(\gamma(s),\dot\gamma(s),0)\right\rvert\leq \epsilon,$$
for almost every $s\in [a,b]$. Therefore,
$$ \int_{a}^{b} \frac{\partial L}{\partial u}(\gamma(s),\dot\gamma(s),\lambda u_\lambda(\gamma(s)))\,\dd s\leq -(b-a)\epsilon.\qed$$
\def\qed{}
\end{proof}

\begin{rem}
From the proof of Lemma \ref{essentiel2} we find that condition {\bf(L3$'$)} can be weakened to only hold on a neighborhood of $TM\times\{0\}$.
\end{rem}

\begin{thm}[Local uniqueness] 
Suppose that the Lagrangian  $L:TM\times \R\to\R$ satisfies conditions {\bf (L0)}, {\bf(L1)}, {\bf(L2)}, {\bf(L3$'$)} and {\bf(L4)}. Then there exists $\lambda_1>0$ such that the Hamilton--Jacobi equation
\eqref{HJlambda} has a unique viscosity solution for $0<\lambda\leq \lambda_1$. 
\end{thm}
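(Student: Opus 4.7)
The plan is to follow the same skeleton as the proof of the global uniqueness theorem, but to replace the use of concavity (L5) by an argument based on Lemma \ref{essentiel2}. Fix $\lambda_1,\epsilon,T_0$ as provided by Lemma \ref{essentiel2}, and let $u_\lambda,\tilde u_\lambda$ be two continuous viscosity solutions of \eqref{HJlambda} with $0<\lambda\le\lambda_1$. I would apply Lemma \ref{Gen-equibounded} with $c=c(H^0)$, $u=\tilde u_\lambda$, $w=u_\lambda$, $\varphi=\lambda\tilde u_\lambda$ and $\psi=\lambda u_\lambda$. Under alternative (1) the maximum of $\tilde u_\lambda-u_\lambda$ is attained at a point where $\lambda(\tilde u_\lambda-u_\lambda)\le 0$, and since $\lambda>0$ this forces $\max_M(\tilde u_\lambda-u_\lambda)\le 0$; swapping the roles of $u_\lambda$ and $\tilde u_\lambda$ then yields $u_\lambda=\tilde u_\lambda$. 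The whole task therefore reduces to ruling out alternative (2) for every $\lambda\in ]0,\lambda_1]$.

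If alternative (2) held, there would exist a Lipschitz curve $\gamma:\,]-\infty,0]\to M$ which is both $\tilde u_\lambda$-calibrated for ${\bf L}^{\lambda\tilde u_\lambda}+c(H^0)$ and $u_\lambda$-calibrated for ${\bf L}^{\lambda u_\lambda}+c(H^0)$, with $\lambda\tilde u_\lambda(\gamma(-t))>\lambda u_\lambda(\gamma(-t))$ for every $t\ge 0$. Inspecting the proof of Lemma \ref{Gen-equibounded} (cf.\ equation \eqref{Constance2gen}) provides in addition the pointwise identity
$$L\bigl(\gamma(-t),\dot\gamma(-t),\lambda\tilde u_\lambda(\gamma(-t))\bigr)=L\bigl(\gamma(-t),\dot\gamma(-t),\lambda u_\lambda(\gamma(-t))\bigr)\qquad\text{for a.e.\ }t\ge 0.$$
I would then observe that (L0) makes $r\mapsto L(\gamma(-t),\dot\gamma(-t),r)$ non-increasing, so the equality at two distinct levels forces it to be \emph{constant} on the closed interval $[\lambda u_\lambda(\gamma(-t)),\lambda\tilde u_\lambda(\gamma(-t))]$. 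Condition (L3$'$) supplies a continuous partial derivative $\partial L/\partial u$ on all of $TM\times\R$, and a function that is $C^1$ in $u$ and constant on a nondegenerate interval has zero derivative on its interior; by continuity of the derivative this zero extends to the endpoints of the interval. Consequently
$$\frac{\partial L}{\partial u}\bigl(\gamma(-t),\dot\gamma(-t),\lambda u_\lambda(\gamma(-t))\bigr)=0\qquad\text{for a.e.\ }t\ge 0.$$

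To reach a contradiction I would apply Lemma \ref{essentiel2} to the restriction $\gamma|_{[-T,0]}$, which remains $u_\lambda$-calibrated, for any $T\ge T_0$. It yields
$$\int_{-T}^0\frac{\partial L}{\partial u}\bigl(\gamma(s),\dot\gamma(s),\lambda u_\lambda(\gamma(s))\bigr)\,\dd s\le-T\epsilon<0,$$
which clashes head-on with the a.e.\ vanishing of the integrand established in the previous step. Hence alternative (2) cannot occur for $0<\lambda\le\lambda_1$, and uniqueness follows (existence on the whole of $\lambda>0$ being already provided by Proposition \ref{prop_exsitulam}). I expect the main obstacle to be precisely the passage from the a.e.\ equality of $L$-values along $\gamma$ to the a.e.\ vanishing of $\partial L/\partial u$ along $\gamma$: this is where the strict inequality $\tilde u_\lambda>u_\lambda$ on the calibrating curve, the monotonicity (L0) and the continuous $u$-differentiability (L3$'$) must be combined, and it is the reason why one needs (L3$'$) here rather than the weaker (L3) used in Theorem \ref{main}.
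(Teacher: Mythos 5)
Your proposal is correct and follows essentially the same route as the paper: apply Lemma \ref{Gen-equibounded} with $\varphi=\lambda\tilde u_\lambda$, $\psi=\lambda u_\lambda$, dispose of alternative (1) directly, and in alternative (2) use the a.e.\ coincidence of the $L$-values along the doubly calibrated curve together with {\bf(L0)} and {\bf(L3$'$)} to get $\frac{\partial L}{\partial u}(\gamma,\dot\gamma,\lambda u_\lambda(\gamma))=0$ a.e., contradicting Lemma \ref{essentiel2}. The only cosmetic difference is that you extract the $L$-value identity by inspecting the proof of Lemma \ref{Gen-equibounded} (it also follows directly from the double calibration and the constancy of $\tilde u_\lambda-u_\lambda$ along $\gamma$), which is exactly what the paper does implicitly.
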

\begin{proof} We choose the $\lambda_1$ and $T_0$ given by Lemma \ref{essentiel2}.
Again it suffices to show that $\max_M(\tilde u_\lambda - u_\lambda)\leq 0 $ for any pair $\tilde u_\lambda, u_\lambda $ of 
 solutions of \eqref{HJlambda} with $0<\lambda\leq \lambda_1$.
 
Again, we apply Lemma \ref{Gen-equibounded}, with $\varphi=\lambda \tilde u_\lambda,
\psi=\lambda u_\lambda, u=\tilde u_\lambda$ and $w=u_\lambda$. If alternative (1) of that Lemma 
\ref{Gen-equibounded} holds, then $\max_M(\tilde u_\lambda - u_\lambda)$ can be attained at a point where
$\lambda\tilde u_\lambda -\lambda u_\lambda$ is $\leq 0$. Since $\lambda>0$, we indeed have
$\max_M(\tilde u_\lambda - u_\lambda)\leq 0 $.

It remains to show that alternative (2) of Lemma \ref{Gen-equibounded} leads to a contradiction.

By alternative (2), we can find a Lipschitz curve $\gamma:\,]-\infty,0]\to M$, which is  $u_\lambda$-calibrated  for ${\bf L}^{\lambda u_\lambda}+c(H^0)$ such that:
\begin{enumerate}
\item [(a)] $\lambda \tilde u_\lambda(\gamma(t))-\lambda u_\lambda(\gamma(t))>0$, or equivalently
$\tilde u_\lambda(\gamma(t))>u_\lambda(\gamma(t))$, for all $t\in ]-\infty,0]$.

\item [(b)] $L(\gamma(t),\dot\gamma(t),\lambda \tilde u_\lambda(\gamma(t))=L(\gamma(t),\dot\gamma(t),\lambda u_\lambda(\gamma(t))$,  for almost all $t\in ]-\infty,0]$.
\end{enumerate}
By condition {\bf(L0)}, we infer from equality (b) above that for almost all $t\in ]-\infty,0]$, 
$$L(\gamma(t),\dot\gamma(t),r)=L(\gamma(t),\dot\gamma(t),\lambda u_\lambda(\gamma(t)),
\text{  for  all $r\in [\lambda u_\lambda(\gamma(t)),\lambda \tilde u_\lambda(\gamma(t))]$.}$$ 
Since by {\bf(L3$'$)}, the partial derivative $\partial L/\partial u$ exists everywhere on $TM\times \R$,
we conclude that
$$\frac{\partial L}{\partial u}(\gamma(t),\dot\gamma(t),\lambda u_\lambda(\gamma(t))=0,\text{  for almost all $t\in ]-\infty,0]$.}$$
Hence,
$$\int_{-T}^0\frac{\partial L}{\partial u}(\gamma(t),\dot\gamma(t),\lambda u_\lambda(\gamma(t))\,\dd t=0
\text{ for  all $T\geq 0$.}$$
This contradicts Lemma \ref{essentiel2} for $0<\lambda\leq \lambda_1$ and $T>T_0$.
\end{proof}

%%%%%%%%%%%%%%%%%%%%%%%%%%%%%%%%%%%%%%%%%%%%%%%%%%%%%
\section{An extension of the results}\label{section_an_extension}
%%%%%%%%%%%%%%%%%%%%%%%%%%%%%%%%%%%%%%%%%%%%%%%%%%%%%

In our theorems, it may be argued that $c(H^0)$ is rarely easy to compute and therefore that the equations \eqref{HJlambda} are not natural. In this section we explain how to apply the previous results to solutions of the equations 
\begin{equation}\tag{HJ$_\lambda^0$}\label{eq_hj0lam}
H(x,d_x u,\lambda u(x))=0.
\end{equation}

To this end we assume that the Lagrangian $L$ associated to $H$ verifies {\bf (L0)}, {\bf(L1)}, {\bf(L2)} and {\bf(L3$'$)}. Assume moreover the following:
\begin{itemize}[itemindent=1em]
 \item [\bf(L4$'$)] There exists a constant $c_0 \in \R$ such that the critical value $c(H^{c_0})=0$\footnote{ This is for example the case if there exists
 $C>0$ such that $H(x,0,C)\geq 0$ and $H(x,0,-C)\leq 0$ for all $x\in M$.} 
 and such that 
 $$\int_{TM}\frac{\partial L}{\partial u}(x,v,c_0)\,\dd\tilde\mu(x,v)<0,\text{ for all Mather measures 
 $\tilde\mu\in\Mis(L^{c_0})$}.$$
 \end{itemize}
 
 Hence, as previously, it can be proved, using Perron's method, that \eqref{eq_hj0lam} admits viscosity solutions for every $\lambda>0$. The first issue is to study the possible limits of $\lambda v_\lambda$ as $\lambda \to 0$. A hint is given by the next lemma:

\begin{lemme}\label{lem_exten}
For all $c_1\neq c_0$, we have $c(H^{c_1}) \neq c(H^{c_0})$.
\end{lemme}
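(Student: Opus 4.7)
The plan is to argue by contradiction: suppose $c(H^{c_1}) = c(H^{c_0}) = 0$ for some $c_1 \neq c_0$. First I would observe that since $H(x,p,u)$ is non-decreasing in $u$ (condition \textbf{(H0)}), any subsolution of $H^{r}(x,d_xu) = 0$ is also a subsolution of $H^{r'}(x,d_xu) = 0$ whenever $r \le r'$, so $r \mapsto c(H^r)$ is non-decreasing. Consequently $c(H^r) = 0$ for every $r$ in the closed interval $I$ between $c_0$ and $c_1$.

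The crux is to exhibit a Mather measure of $L^{c_0}$ which also $\partial L/\partial u(\cdot,\cdot,c_0)$ integrates to zero against. I would split into two cases according to the sign of $c_1 - c_0$. If $c_1 > c_0$, I take any Mather measure $\tilde\mu$ of $L^{c_0}$ (which exists by Proposition \ref{PropertiesMather}), so that $\int L^{c_0}\,d\tilde\mu = 0$. By monotonicity \textbf{(L0)}, $L^{c_1} \le L^{c_0}$, hence $\int L^{c_1}\,d\tilde\mu \le 0$; but $\tilde\mu$ is closed and $c(H^{c_1}) = 0$, so Proposition \ref{Propertiesclosedmeas} gives $\int L^{c_1}\,d\tilde\mu \ge 0$. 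Thus $\int (L^{c_0} - L^{c_1})\,d\tilde\mu = 0$ with the integrand continuous and nonnegative, which forces $L^{c_0} = L^{c_1}$ on $\supp\tilde\mu$. If $c_1 < c_0$, the symmetric move works: starting from a Mather measure $\tilde\nu$ of $L^{c_1}$, closedness gives $\int L^{c_0}\,d\tilde\nu \ge 0$ while $L^{c_0} \le L^{c_1}$ gives $\int L^{c_0}\,d\tilde\nu \le 0$; so $\tilde\nu$ is in fact a Mather measure for $L^{c_0}$ too, and again $L^{c_0} = L^{c_1}$ on $\supp\tilde\nu$.

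In either case I obtain a Mather measure $\tilde\mu$ of $L^{c_0}$ such that $L(x,v,c_1) = L(x,v,c_0)$ for every $(x,v) \in \supp\tilde\mu$. Monotonicity \textbf{(L0)} then sandwiches $L(x,v,s)$ between these two equal values for all $s \in I$, so the map $s \mapsto L(x,v,s)$ is constant on the nondegenerate interval $I$ (with $c_0$ as an endpoint) for each $(x,v) \in \supp\tilde\mu$. By \textbf{(L3$'$)} the derivative $\partial L/\partial u(x,v,c_0)$ exists, so it must equal the one-sided derivative on $I$, namely $0$. Integrating yields
\[
\int_{TM} \frac{\partial L}{\partial u}(x,v,c_0)\,d\tilde\mu(x,v) = 0,
\]
which directly contradicts the non-degeneracy condition \textbf{(L4$'$)}.

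I expect the main conceptual point — rather than any technical obstacle — to be noticing that one must pick the Mather measure from different sides depending on the sign of $c_1 - c_0$: the ``closed measure inequality'' combined with the pointwise comparison of $L^{c_0}$ and $L^{c_1}$ only yields the equality $\int L^{c_0}\,d\tilde\mu = \int L^{c_1}\,d\tilde\mu$ when the Mather measure is chosen for the \emph{smaller} of $L^{c_0}, L^{c_1}$. Once this is handled, the passage from equality on the support to vanishing of $\partial L/\partial u$ at $c_0$ is immediate from \textbf{(L3$'$)}, and the contradiction with \textbf{(L4$'$)} closes the argument.
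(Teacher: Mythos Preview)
Your proof is correct, and it takes a genuinely different route from the paper's.

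The paper argues via calibrated curves in the spirit of Lemma~\ref{Gen-equibounded}: it fixes viscosity solutions $u_0$ of $H^{c_0}(x,d_xu)=0$ and $u_1$ of $H^{c_1}(x,d_xu)=0$, looks at a point where $u_0-u_1$ is maximal, runs a $u_1$-calibrated curve $\gamma$ backward from there, and uses the domination/calibration inequalities together with \textbf{(L0)} to force $L(\gamma,\dot\gamma,c_0)=L(\gamma,\dot\gamma,c_1)$ a.e.; a limiting measure built from $\gamma$ is then a Mather measure for $L^{c_0}$ contradicting \textbf{(L4$'$)}. You bypass solutions and calibrated curves entirely and work purely at the level of closed measures: pick a Mather measure for the \emph{smaller} of $L^{c_0},L^{c_1}$, use Proposition~\ref{Propertiesclosedmeas} on the other Lagrangian to squeeze the integral, and conclude pointwise equality on the support. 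Your approach is shorter and more self-contained; the paper's approach has the virtue of reusing the comparison machinery already developed (and only sketches the measure construction by referring back to Lemma~\ref{Gen-equibounded}). Your observation that the Mather measure must be chosen on the correct side is exactly the point; the paper handles this asymmetry by fixing $c_0>c_1$ and declaring the other case analogous.
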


\begin{proof}
We only sketch the proof as it uses ideas already exposed several times. Assume the existence of $c_1 \in \R$ such that  $c(H^{c_1}) = c(H^{c_0})=0$ and such that $c_0>c_1$, the other case is treated in an analogous way. Let $u_0 : M\to \R$ and  $u_1: M\to \R$ be, respectively, viscosity solutions of $H(x,d_x u_0, c_0) = 0$ and $H(x,d_x u_1, c_1) = 0$.

Let $x_0\in M$ be a point where $u_0-u_1$ reaches its maximum and $\gamma : ]-\infty , 0] \to M$ a Lipschitz curve that is $u_1$-calibrated for $L^{c_1}$ with $\gamma(0) = x_0$. It follows that for all $t>0$,

\begin{align*}
u_0(x_0) - u_0(\gamma(-t)) &\leq \int_{-t}^0 L(\gamma(s),\dot\gamma(s), c_0)\,\dd s \\
&\leq \int_{-t}^0 L(\gamma(s),\dot\gamma(s), c_1)\,\dd s = u_1(x_0) - u_1(\gamma(-t)) ,
\end{align*}
where we have used the monotonicity of $L$. Since $u_0(x_0)-u_1(x_0)=\max_M(u_0-u_1)$, it follows that all the previous inequalities are equalities and that 
$L(\gamma(s),\dot\gamma(s), c_0) = L(\gamma(s),\dot\gamma(s), c_1) $ for almost every $s\in  ]-\infty , 0]$ and finally that $\partial L / \partial u(\gamma(s),\dot\gamma(s), c_0) =0$  for almost every $s\in  ]-\infty , 0]$. Then, using a similar argument as in the proof of Lemma \ref{Gen-equibounded}  we can construct a Mather measure for $L^{c_0}$ contradicting condition {\bf(L4$'$)}.
\end{proof}

It can be easily seen that the map $c_1 \mapsto c(H^{c_1}) $ is non-decreasing. Lemma \ref{lem_exten} shows that it is strictly increasing at $c_0$. We now state the main result of this section:

\begin{thm}
For the Lagrangian $L :TM\times\mathbb R \rightarrow \mathbb R$ satisfying conditions {\bf (L0)}, {\bf(L1)}, {\bf(L2)}, {\bf(L3$'$)} and {\bf(L4$'$)}, 
there exists a function $v_0 : M\to \R$ that is a viscosity solution of $H(x,d_x v_0, c_0) = 0$ such that:
 For all $\lambda>0$, let $v_\lambda$ be a viscosity solution of \eqref{eq_hj0lam}, then the family $v_\lambda - c_0/\lambda$ uniformly converges to $v_0$ as $\lambda \to 0$.
\end{thm}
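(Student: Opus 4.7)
The strategy is to reduce the statement to Theorem \ref{main} via a straightforward change of variables. I would define the shifted Hamiltonian $\tilde H:T^*M\times\R\to\R$ by $\tilde H(x,p,u):=H(x,p,u+c_0)$, with corresponding Lagrangian $\tilde L(x,v,u):=L(x,v,u+c_0)$. The first step is to verify that $\tilde L$ satisfies conditions {\bf(L0)}--{\bf(L4)} so that Theorem \ref{main} applies. Monotonicity, convexity and superlinearity for $\tilde L$ are immediate from the corresponding properties of $L$, and the regularity {\bf(L3)} (even the stronger {\bf(L3$'$)}) is preserved because $\partial \tilde L/\partial u(x,v,u)=\partial L/\partial u(x,v,u+c_0)$ is continuous on $TM\times\R$. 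The key point is that $\tilde L^0(x,v)=L(x,v,c_0)=L^{c_0}(x,v)$, so $\Mis(\tilde L^0)=\Mis(L^{c_0})$ and $c(\tilde H^0)=c(H^{c_0})=0$; since $\partial \tilde L/\partial u(x,v,0)=\partial L/\partial u(x,v,c_0)$, condition {\bf(L4)} for $\tilde L$ is exactly {\bf(L4$'$)} for $L$.

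Next I would observe that the change of unknown $w_\lambda:=v_\lambda-c_0/\lambda$ transforms \eqref{eq_hj0lam} into the standard discounted equation for $\tilde H$. Indeed, $d_x w_\lambda=d_x v_\lambda$ and $\lambda w_\lambda=\lambda v_\lambda-c_0$, so $v_\lambda$ is a viscosity solution of $H(x,d_xu,\lambda u)=0$ if and only if $w_\lambda$ is a viscosity solution of
\[
\tilde H(x,d_xw_\lambda,\lambda w_\lambda(x))=0=c(\tilde H^0),
\]
which is exactly equation \eqref{HJlambda} written for the Lagrangian $\tilde L$. In particular, existence of such $w_\lambda$ (and hence of $v_\lambda$) for every $\lambda>0$ is already provided by Theorem \ref{mythm0}.

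Finally, Theorem \ref{main} applied to $\tilde L$ yields that any family of viscosity solutions $(w_\lambda)_{\lambda>0}$ converges uniformly, as $\lambda\to 0^+$, to a single continuous function $v_0:M\to\R$ that is a viscosity solution of $\tilde H^0(x,d_xv_0)=0$, i.e.\ of $H(x,d_xv_0,c_0)=0$. Translating back, $v_\lambda-c_0/\lambda\rightrightarrows v_0$, which is precisely the stated conclusion. I do not anticipate any substantial obstacle: once the reduction via the constant shift $c_0/\lambda$ is made, the verification of hypotheses is routine and the result is a direct corollary of the main theorem. (Lemma \ref{lem_exten} is not needed here; it simply ensures that the shift $c_0$ is uniquely pinned down by {\bf(L4$'$)}, so that the limit $v_0$ is unambiguous.)
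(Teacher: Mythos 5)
Your proposal is correct and is essentially the paper's own argument: the paper likewise defines $\widetilde H(x,p,u)=H(x,p,c_0+u)$, notes that $\tilde v_\lambda=v_\lambda-c_0/\lambda$ solves the discounted equation for $\widetilde H$ with critical value $0$, and applies Theorem \ref{main}. Your version merely spells out the verification of {\bf(L0)}--{\bf(L4)} for the shifted Lagrangian, which the paper leaves implicit, and your remark about Lemma \ref{lem_exten} matches its role in the paper.
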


\begin{proof}
Let us define the Hamiltonian $\widetilde H : (x,p,u) \mapsto H(x,p,c_0+u)$. Then $0$ is the critical value of $\widetilde H(x,p,0)$. For $\lambda>0$ let us set $\tilde v_\lambda = v_\lambda - c_0/\lambda$, it is a solution of $\widetilde H(x, d_x \tilde v_\lambda , \lambda \tilde v_\lambda(x))=0$. Therefore, Theorem \ref{main} can be applied to $\widetilde H$ and gives the desired result.
\end{proof}

%%%%%%%%%%%%%%%%%%%%%%%%%%%%%%%%%%%%%%%%%%%%%%%%%%%%%
\section{A counterexample}\label{section_counterexample}
%%%%%%%%%%%%%%%%%%%%%%%%%%%%%%%%%%%%%%%%%%%%%%%%%%%%%

We conclude with a simple example showing the necessity of the non--degeneracy  hypothesis {\bf(L4)}.

Let us start with a $1$-periodic Hamiltonian of the pendulum on $\R$: 
\[H(x,p) = \frac12 |p|^2 + \cos(2\pi x).\] Its critical value is $c(H)=1$ (whether we see it as a $1$-periodic function  a $2$-periodic function or a $4$-periodic function).

Let $\alpha : \R \to \R$ be a non-negative continuous function that is supported on $[1/2,3/2]$ and positive on $]1/2,3/2[$. For each integer  $n\geq 2$, we will denote by $\alpha_n:\R\to\R$ the $n$-periodic function that coincides with $\alpha$ on $[0,n]$. We will denote by $H_n (x,p,u) = H(x,p) + \alpha_n(x) u$. 

If $\lambda>0$, let $u_\lambda : \R \to \R$ be a $2$-periodic function solution of $H_2 (x,d_x u_\lambda,\lambda u_\lambda(x))=1$. Such a function exists by Perron's method, with arguments already presented before. Such a function is differentiable at $x=2$ with $u_\lambda'(x)=0$.

We consider the equation $H(x, u'(x)) =1$, and pick two $2$-periodic solutions of this equation. The first one is actually the only $1$-periodic solution up to constants, and we only write it on $[0,1]$:
$$v^1 ( x) = 
\begin{cases}
\int_0^x \sqrt{ 2-2\cos(2\pi s) }\,\dd s ,& x\in[0,1/2], \\
\int_1^x -\sqrt{ 2-2\cos(2\pi s) }\,\dd s ,& x\in[1/2,1];
\end{cases}
$$
so $v^1(0)=v^1(1)=v^1(2)=0$.
The second one is $C^1$ and $2$-periodic, and we write it on $[0,2]$:

$$v^2(x)= 
\begin{cases}
\int_0^x \sqrt{ 2-2\cos(2\pi s) }\,\dd s, & x\in[0,1], \\
\int_2^x -\sqrt{ 2-2\cos(2\pi s) }\,\dd s, & x\in[1,2];
\end{cases}
$$
so $v^2 (0)=v^2(2)=0$.

We now construct $4$-periodic functions (we give the definition on $[0,4]$) that are solutions of $H_4(x,v'(x),\lambda v(x))=1$ on $M=\R/4\Z$, by gluing a piece of $u_\lambda$ with $v^1$ or $v^2$. More precisely, for $\lambda>0$, we define  
$$v^1_\lambda ( x) = 
\begin{cases}
u_\lambda(x) ,& x\in[0,2], \\
v^1(x)+u_\lambda(2) ,& x\in[2,4];
\end{cases}
\quad\qquad
v^2_\lambda ( x) = 
\begin{cases}
u_\lambda(x) ,& x\in[0,2], \\
v^2(x)+u_\lambda(2) ,& x\in[2,4].
\end{cases}
$$
Note that the gluing are differentiable at junction points.

With these constructions, it is obvious that solutions of $H_4(x,v'(x),\lambda v(x))=1$ do not necessarily converge as $\lambda \to 0$. Note that the Mather measures of $H_4^0$ are convex combinations of the Dirac measures $\delta_{(0,0)}$, $\delta_{(1,0)}$, $\delta_{(2,0)}$ and $\delta_{(3,0)}$ supported on hyperbolic equilibria.  However, the non--degeneracy condition {\bf(L4)} is only verified for the measure $\delta_{(1,0)}$.

\bibliography{mybib}
\bibliographystyle{alpha}

\end{document}